\numberwithin{equation}{section}
\newtheorem{theorem}{Theorem}[section]
\newtheorem{lemma}[theorem]{Lemma}
\newtheorem{proposition}[theorem]{Proposition}
\newtheorem{corollary}[theorem]{Corollary}
\theoremstyle{definition}
\newtheorem{definition}[theorem]{Definition}
\newtheorem{example}[theorem]{Example}
\newtheorem{assumption}[theorem]{Assumption}
\theoremstyle{remark}
\newtheorem{remark}[theorem]{Remark}
\newcommand{\CC}{\mathbb{C}}
\newcommand{\NN}{\mathbb{N}}
\newcommand{\RR}{\mathbb{R}}
\newcommand{\ZZ}{\mathbb{Z}}
\newcommand{\calB}{\mathcal{B}}
\newcommand{\calD}{\mathcal{D}}
\newcommand{\calH}{\mathcal{H}}
\newcommand{\calR}{\mathcal{R}}
\newcommand{\calW}{\mathcal{W}}
\newcommand{\frakD}{\mathfrak{D}}
\newcommand{\frakM}{\mathfrak{M}}
\newcommand{\frakR}{\mathfrak{R}}
\newcommand{\ind}{\operatorname{ind}}
\newcommand{\dom}{\operatorname{dom}}
\newcommand{\aps}{{\rm APS}}
\newcommand{\rmc}{\mathrm{c}}
\newcommand{\upper}{\uppercase\expandafter}
\newcommand{\p}{\partial}
\newcommand{\pM}{{\p M}}
\newcommand{\ch}{\operatorname{ch}}
\newcommand{\Tch}{\operatorname{Tch}}
\newcommand{\End}{\operatorname{End}}
\newcommand{\rank}{\operatorname{rank}}
\newcommand{\tr}{\operatorname{tr}}
\newcommand{\Tr}{\operatorname{Tr}}
\renewcommand{\span}{\operatorname{span}}
\newcommand{\spf}{\operatorname{sf}}
\newcommand{\hatA}{\hat{A}}
\newcommand{\slaD}{\slashed{D}}
\newcommand{\slaS}{\slashed{S}}
\newcommand{\slaU}{\slashed{U}}
\newcommand{\tilg}{\tilde g}
\newcommand{\oeta}{\bar{\eta}}
\begin{document}

\normalsize

\title[Relative eta invariant and uniformly PSC]{Relative eta invariant and uniformly positive scalar curvature on non-compact manifolds}

\author[Pengshuai Shi]{Pengshuai Shi}
\address{School of Mathematics and Statistics, Beijing Institute of Technology, Beijing 100081, People's Republic of China}

\email{shipengshuai@bit.edu.cn, pengshuai.shi@gmail.com}

\subjclass[2020]{Primary 58J28; Secondary 58J20, 53C27, 58J30, 58J32, 58J35}

\keywords{Dirac--Schr\"odinger operator, non-compact manifold, relative eta invariant, spectral flow, positive scalar curvature, connected sum}

\thanks{The author was partially supported by the NSFC (grant no. 12101042) and Beijing Institute of Technology Research Fund Program for Young Scholars.}

\begin{abstract}
On complete non-compact manifolds with bounded sectional curvature, we consider a class of self-adjoint Dirac-type operators called Dirac--Schr\"odinger operators. Assuming two Dirac--Schr\"odinger operators coincide at infinity, by previous work, one can define their relative eta invariant. A typical example of Dirac--Schr\"odinger operators is the (twisted) spin Dirac operators on spin manifolds which admit a Riemannian metric of uniformly positive scalar curvature. In this case, using the relative eta invariant, we get a geometric formula for the spectral flow on non-compact manifolds, which induces a new proof of Gromov--Lawson's result about compact area enlargeable manifolds in odd dimensions. When two such spin Dirac operators are the boundary restriction of an operator on a manifold with non-compact boundary, under certain conditions, we obtain an index formula involving the relative eta invariant. This generalizes the Atiyah--Patodi--Singer index theorem to non-compact boundary situation. As a result, we can use the relative eta invariant to study the space of uniformly positive scalar curvature metrics on some non-compact connected sums.
\end{abstract}

\maketitle


\section{Introduction}\label{S:intro}

The index theory of Dirac operators has been proved to be a powerful means in the study of scalar curvature problems, ever since the seminal work of Atiyah--Singer \cite{AtSinger63} and Lichnerowicz \cite{Lich63}. In order to deal with more general situations, the original index theorem needs to be extended to non-compact manifolds. This was resolved satisfactorily in the influential work of Gromov--Lawson \cite{GromovLawson83}, where they developed the relative index theory, and made a great achievement in answering significant questions about positive scalar curvature. In recent years, there have been more advances in this active field. See the long article \cite{Gromov23Four} by Gromov for a nice and thorough exposition about results, techniques and problems in the subject of scalar curvature.

While the index of a Fredholm operator encodes information about the kernel of the operator (the difference of the dimensions of kernel and cokernel), the eta invariant of a self-adjoint Fredholm operator is a more sophisticated invariant measuring the spectral asymmetry of the operator (the regularized difference of the numbers of positive eigenvalues and negative eigenvalues). It originates in an index theorem of Atiyah--Patodi--Singer \cite{APS1} and later plays a role in the study of positive scalar curvature, such as in the work of Kreck--Stolz \cite{KreckStolz93}, Botvinnik--Gilkey \cite{BotviGilkey95eta-psc}, etc, where eta invariant is used to investigate the (moduli) space of positive scalar curvature metrics on closed manifolds.

Due to the spectral feature of the eta invariant, it shares more restrictive properties compared to the index. In particular, it is more difficult to generalize this notion to non-compact manifolds. In \cite{Shi22}, the author considered a relative version of the eta invariant, defined for a pair of Dirac-type operators acting on two non-compact manifolds which coincide at infinity. This definition has the advantage of requiring much milder conditions than defining individual eta invariants on non-compact manifolds. The author also studied some properties of the relative eta invariant in \cite{Shi22}.

In this paper, we take a closer look at the relative eta invariant, with the focus on its geometric implications. To this end, we will be mainly concerned about the (twisted) spin Dirac operators on non-compact Riemannian spin manifolds with uniformly positive scalar curvature (PSC for short). In this case, the relative eta invariant satisfies a gluing formula (without mod $\ZZ$), and induces a geometric formula for the spectral flow. Moreover, we can get an APS-type index formula on manifolds with non-compact boundary. These make it possible to investigate uniformly PSC metrics on some non-compact spin manifolds.

\subsection{Summary of the main results}\label{SS:main res}

Let $\calD$ be a formally self-adjoint Dirac-type operator on a non-compact complete Riemannian manifold $M$ without boundary. We call $\calD$ a \emph{Dirac--Schr\"odinger operator} if $\calD^2$ is a Schr\"odinger operator with the potential being uniformly positive at infinity (cf. Definition~\ref{D:Dirac-Schro}). For two Dirac--Schr\"odinger operators $\calD_0$ and $\calD_1$ on two respective manifolds $M_0$ and $M_1$ with bounded sectional curvature, if they coincide at infinity, then their \emph{relative eta invariant}, denoted by $\eta(\calD_1,\calD_0)$, can be defined from a heat operator regularization (cf. Proposition~\ref{P:rel-eta}). It has been proved in \cite{Shi22} that the relative eta invariant satisfies a mod $\ZZ$ gluing formula. When the operators are invertible, we show in Theorem~\ref{T:gluing} that this formula is actually a real equality (which is well-known for eta invariant on compact manifolds).

The eta invariant is closely related to the spectral flow. On non-compact spin manifolds, we can get the following geometric formula computing the spectral flow (cf. Subsection~\ref{SS:Chern-Simons sp-flow }).

\begin{theorem}[cf. Theorem~\ref{T:sp flow-trans}]\label{T:intro-1}
Suppose that $M$ is an odd-dimensional non-compact Riemannian spin manifold which admits a complete metric $g$ of uniformly PSC and bounded sectional curvature. For $r\in[0,1]$, let $\nabla_r^F$ be a linear path of connections connecting two flat connections $\nabla_0^F$ and $\nabla_1^F$ on a Hermitian vector bundle $F$ over $M$ such that they coincide at infinity. Let $\slaD_{E,r}$ be the associated family of twisted spin Dirac operators on the twisted spinor bundle $E=\slaS\otimes F$. Then
\[
\begin{aligned}
\spf(\slaD_{E,r})_{[0,1]}=&\int_M\hat{A}(M,g)\Tch(\nabla_0^F,\nabla_1^F) \\
&+\frac{1}{2}\big(\eta(\slaD_{E,1},\slaD_{E,0})+\dim\ker\slaD_{E,1}-\dim\ker\slaD_{E,0}\big),
\end{aligned}
\]
where $\hat{A}(M,g)$ is the $\hat{A}$-genus form of $(M,g)$, and $\Tch(\nabla_0^F,\nabla_1^F)$ is the Chern--Simons form associated to $\nabla_0^F,\nabla_1^F$.
\end{theorem}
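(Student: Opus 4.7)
The plan is to identify the spectral flow with the APS index of a suspended Dirac operator on the cylinder $\tilde M = M \times [0,1]$, apply the non-compact APS-type index formula developed later in the paper, and collapse the boundary contribution into a single relative eta invariant via Theorem~\ref{T:gluing}.

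First I would verify that each $\slaD_{E,r}$ along the linear path $\nabla^F_r = (1-r)\nabla_0^F + r\nabla_1^F$ is a Dirac--Schr\"odinger operator, and that all members of the family coincide at infinity. The Lichnerowicz formula gives
\[
\slaD_{E,r}^2 \,=\, \nabla^*\nabla + \tfrac{s}{4} + c(R^F_r),
\]
and since both endpoint connections are flat and agree at infinity, $R^F_r$ vanishes at infinity for every $r$; the uniformly PSC hypothesis then forces the Schr\"odinger potential $\tfrac{s}{4} + c(R^F_r)$ to be uniformly positive at infinity. Hence $\slaD_{E,r}$ is essentially self-adjoint and Fredholm for each $r$, the spectral flow $\spf(\slaD_{E,r})_{[0,1]}$ is well defined, and $\eta(\slaD_{E,r_1},\slaD_{E,r_0})$ exists by Proposition~\ref{P:rel-eta}.

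Next I would form the cylinder $\tilde M = M \times [0,1]$ with product metric, pull back $E$ to $\tilde E$, and equip it with the connection $\tilde\nabla^F$ that restricts to $\nabla^F_r$ on each slice $M\times\{r\}$ (and has no $dr$-component). The resulting twisted Dirac operator $\slaD_{\tilde E}$ has boundary restrictions $\slaD_{E,0}$ and $\slaD_{E,1}$, and the standard Robbin--Salamon identification, valid in the present Fredholm setting, yields $\spf(\slaD_{E,r})_{[0,1]} = \ind(\slaD_{\tilde E},\aps)$. Applying the non-compact APS-type index formula of the paper to $\slaD_{\tilde E}$, the interior contribution evaluates via Fubini and the defining transgression identity $\ch(\nabla^F_1) - \ch(\nabla^F_0) = d\Tch(\nabla^F_0,\nabla^F_1)$ to
\[
\int_{\tilde M} \hat{A}(\tilde M)\,\ch(\tilde\nabla^F) \,=\, \int_M \hat{A}(M,g)\,\Tch(\nabla_0^F,\nabla_1^F),
\]
while the boundary contribution organizes as $\tfrac{1}{2}$ times the difference of reduced eta invariants $\tilde\eta = \eta + \dim\ker$ at the two ends. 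By Theorem~\ref{T:gluing} this difference equals $\tfrac{1}{2}\bigl(\eta(\slaD_{E,1},\slaD_{E,0}) + \dim\ker\slaD_{E,1} - \dim\ker\slaD_{E,0}\bigr)$, giving the claimed identity.

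The principal obstacle is that individual eta invariants of the endpoint operators are not defined in the non-compact setting, so the APS boundary contribution must be reorganized as a difference of reduced eta invariants that collapses into a single relative eta invariant; this is precisely where the real (not merely modulo $\ZZ$) gluing equality of Theorem~\ref{T:gluing} is essential. Additional care is needed to verify that $\slaD_{\tilde E}$ satisfies the hypotheses of the non-compact APS-type theorem, namely the Dirac--Schr\"odinger structure on the cylinder, the domain choices at the boundary, and uniform invertibility at infinity, but these all reduce to the verifications already carried out on each slice.
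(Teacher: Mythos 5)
Your proposal has a genuine gap: it hinges on applying ``the non-compact APS-type index formula of the paper'' to the suspended operator on $M\times[0,1]$, but no such general formula is available at this point (or indeed anywhere in the paper). The APS-type results the paper does prove, Theorems~\ref{T:cobor rel-eta} and \ref{T:cobor conn-sum}, (a) appear in Section~\ref{S:ind psc}, after this theorem, (b) concern spin Dirac operators for a family of \emph{metrics} on the boundary manifold under restrictive hypotheses (PSC-isotopy or a connected-sum structure), not a family of twisted Dirac operators for a path of connections, and (c) are themselves proved using the variational/spectral-flow machinery of Section~\ref{S:sp flow} and \cite{Shi22}, so invoking them here would be circular even if they applied. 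Relatedly, your final step misuses Theorem~\ref{T:gluing}: that theorem compares the relative eta invariant of two operators on $M_0,M_1$ with the relative eta invariant of the cut operators under a family of boundary conditions along a \emph{compact} hypersurface; it does not convert a formal difference of (undefined) individual reduced eta invariants at the two ends of a cylinder into $\xi(\slaD_{E,1},\slaD_{E,0})$. Note also that Theorem~\ref{T:gluing} requires empty essential support, i.e.\ global invertibility; the interior operators $\slaD_{E,r}$, $r\in(0,1)$, are precisely the ones that need not be invertible (otherwise the spectral flow would vanish), so the gluing theorem cannot be brought to bear on the family.

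The paper's actual route avoids any boundary-value index theorem. It establishes a Getzler-type formula (Proposition~\ref{P:sp-flow trun rel-eta}) expressing $\spf(\calD_r)_{[0,1]}$ as $(\varepsilon/\pi)^{1/2}\int_0^1\Tr(\dot\calD_r e^{-\varepsilon\calD_r^2})\,dr$ plus the \emph{truncated} reduced relative eta invariant $\xi_\varepsilon$, obtained by combining the variation formula for $\eta^\varepsilon$ (Lemma~\ref{L:var trun rel-eta}) with the spectral flow formula of \cite{Shi22}. It then takes $\varepsilon\to0$: since $\dot{\slaD}_{E,r}=\rmc(\omega)$ is compactly supported, the short-time trace localizes to a compact piece of $M$, which is doubled to a closed manifold where the standard local index computation yields $\int_M\hat A(M,g)\Tch(\nabla_0^F,\nabla_1^F)$. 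If you want to salvage your cylinder picture, you would first have to prove the non-compact APS identity you are assuming, which is essentially equivalent in difficulty to the theorem itself; the direct heat-kernel argument is the intended path.
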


This formula can be used to give a new proof of Gromov--Lawson's result that there does not exist a PSC metric on a compact area enlargeable manifold in the odd-dimensional case.

One of the natural questions about relative eta invariant is whether it appears as a boundary term of an APS index formula on a manifold with non-compact boundary. This was studied in \cite{Shi22} for strongly Callias-type operators in some special cases. The reason to consider strongly Callias-type operators there is that they have discrete spectra. Thus as Braverman and the author did in \cite{BrShi21,BrShi21-2}, one can talk about the APS boundary condition like the compact case. In this paper, we will examine spin Dirac operators on manifolds endowed with a uniformly PSC metric. In this case, the boundary operator is only Fredholm and can have continuous spectrum away from zero. As observed in \cite{APS1}, the index with APS boundary condition can be identified with an $L^2$-index on the manifold obtained by attaching a cylinder to the boundary (called \emph{elongation}). From this point of view, we define an APS-type index on a manifold with non-compact boundary to be the $L^2$-index on the elongation of the original manifold.

Let $M$ be an odd-dimensional non-compact Riemannian spin manifold without boundary, admitting a complete metric $g$ of uniformly PSC and bounded sectional curvature. Let $\frakR_\infty^+(M,g)$ denote the space of uniformly PSC metrics on $M$ which coincide with $g$ at infinity. For $g_0,g_1\in\frakR_\infty^+(M,g)$, our index formula is for a special kind of manifolds called \emph{cobordism} between $g_0$ and $g_1$. Basically, it is a manifold whose boundary constitutes two components, one of which is isometric to $(M,g_0)$, and the other is isometric to $(M,g_1)$ (cf. Subsection~\ref{SS:space psc}). Our index formula is formulated as follows.

\begin{theorem}[cf. Theorem~\ref{T:cobor rel-eta}]\label{T:intro-2}
Let $(W,g_W)$ be a cobordism between two metrics $g_0,g_1\in\frakR_\infty^+(M,g)$ of bounded sectional curvature, and let $(\widetilde{W},\tilde{g})$ be the elongation of $(W,g_W)$. Let $\slaD_{\slaS_0},\slaD_{\slaS_1}$ and $\slaD_{\slaS_{\widetilde{W}}}$ be the corresponding spin Dirac operators on $(M,g_0),(M,g_1)$ and $(\widetilde{W},\tilde{g})$, respectively. Then
\[
\ind\slaD_{\slaS_{\widetilde{W}}}^+=\int_W\hatA(W,g_W)+\frac{1}{2}\eta(\slaD_{\slaS_1},\slaD_{\slaS_0}).
\]
\end{theorem}

The proof of this theorem combines an ``$L^2$-index equals spectral flow'' result on a cylinder and the relation of spectral flow with relative eta invariant obtained in Section~\ref{S:sp flow}.

Having this index theorem, one can get some information about the space of uniformly PSC metrics on some high-dimensional non-compact manifolds. This is a question that was seldom considered before. What we can handle in this paper is the space $\frakR_\infty^+(M,g)$ related to connected sums. To be precise, let $N$ be an odd-dimensional ($\ge5$) closed Riemannian spin manifold which admits a PSC metric. Assume $N$ has a non-trivial finite fundamental group and satisfies an extra condition when $\dim N\equiv1$ mod 4 (see Subsection~\ref{SS:PSC conn-sum odd}). It was shown by Botvinnik and Gilkey in \cite{BotviGilkey95eta-psc} that $N$ admits infinitely many PSC metrics which are not cobordant to each other in the space of PSC metrics. We generalize this result to non-compact connected sums in both odd- and even-dimensional situations. (The even-dimensional case generalizes result of Mrowka--Ruberman--Saveliev \cite{MRS16}.)

\begin{theorem}[cf. Theorems~\ref{T:PSC conn-sum odd} and \ref{T:PSC conn-sum even}]\label{T:intro-3}
Let $N$ be as above. Let $N'$ and $X$ be two non-compact Riemannian spin manifolds without boundary such that $\dim N'=\dim N=\dim X-1$. Suppose $N'$ (resp. $X$) admits a complete metric $h'$ (resp. $\gamma$) of  uniformly PSC and bounded geometry. Set
\[
M_0=N\#N',\quad M_1=(N\#N')\times S^1,\quad M_2=(N\times S^1)\#X.
\]
Then
\begin{enumerate}[leftmargin=*]
\item there exist infinitely many uniformly PSC metrics which are not cobordant to each other in $\frakR_\infty^+(M_0,h')$; \label{IT:intro-3-1}
\item $\frakR_\infty^+(M_1,h'+ds^2)$ and $\frakR_\infty^+(M_2,\gamma)$ both have infinitely many path components. \label{IT:intro-3-2}
\end{enumerate}
\end{theorem}

Note that \ref{IT:intro-3-1} implies that $\frakR_\infty^+(M_0,h')$ has infinitely many path components as well.
This theorem also induces similar result on manifolds with boundary (see Theorem~\ref{T:PSC bdry}).

We expect the relative eta invariant to have broader applications in geometric and topological problems. In order for that to happen, the restriction of bounded sectional curvature (or bounded geometry) should be removed. In the future, we will work on a more general notion of relative eta invariant, and explore its consequences for a wider range of scenarios.

\subsection{Organization of the paper}\label{SS:orga}

This paper is organized as follows. In Section~\ref{S:glu-releta}, we introduce the relative eta invariant for Dirac--Schr\"odinger operators and derive a gluing formula. In Section~\ref{S:sp flow}, we use the relative eta invariant to compute the spectral flow, prove Theorem~\ref{T:intro-1}, and discuss its application. In Section~\ref{S:ind psc}, we consider equivalence relations on the space of uniformly PSC metrics which coincide at infinity and prove Theorem~\ref{T:intro-2}. Section~\ref{S:psc conn-sum} is about some non-triviality results in terms of the connectedness of the space of uniformly PSC metrics on connected sums, where Theorem~\ref{T:intro-3} is proved.



\section{Relative eta invariant and a gluing formula in the invertible case}\label{S:glu-releta}

In this section, we first review the notion of relative eta invariant for a pair of Dirac-type operators called Dirac--Schr\"odinger operators. Then we give a gluing formula for the relative eta invariant that will be used in later sections.

\subsection{Dirac--Schr\"odinger operators}\label{SS:Dirac-Sch}

Let $S\to M$ be a Hermitian vector bundle over a complete Riemannian manifold $M$ (with or without boundary). We call $S$ a \emph{Dirac bundle} if there is a Clifford multiplication $\rmc(\cdot):TM\to\End(S)$ which is skew-Hermitian and satisfies $\rmc(\cdot)^2=-|\cdot|^2$, and a Hermitian connection $\nabla^S$ that is compatible with $\rmc(\cdot)$ (cf. Lawson--Michelsohn \cite[\S II.5]{LawMic89}). The (compatible) \emph{Dirac operator} is a first-order differential operator acting on sections of a Dirac bundle, defined by
\[
D:=\sum_{i=1}^{\dim M}\rmc(e_i)\nabla_{e_i}^S,
\]
where $e_1,\dots,e_{\dim M}$ is a local orthonormal frame of $TM$, and we use the Riemannian metric to identify $TM$ with $T^*M$. Usually, we view $D$ as an unbounded operator in $L^2(S)$ with domain $\dom D\subset L^2(S)$ consisting of all $s\in L^2(S)$ such that $Ds\in L^2(S)$.

In general, let $F$ be another Hermitian vector bundle with Hermitian connection $\nabla^F$. We can extend the Clifford multiplication to $S\otimes F$ by acting as identity on $F$ and form a connection
\[
\nabla^{S\otimes F}=\nabla^S\otimes 1+1\otimes\nabla^F
\]
on $S\otimes F$, so that $E:=S\otimes F$ is again a Dirac bundle. In this case one can define a twisted Dirac operator on $E$.

It is well known that a Dirac operator is formally self-adjoint. From the definition, Dirac operator is just about the square root of a Laplacian. To be precise, one has the Weitzenb\"ock formula (cf. Lawson--Michelsohn \cite[\S II.8]{LawMic89})
\begin{equation}\label{E:Weitz for}
D^2=\nabla^*\nabla+\calR,
\end{equation}
where $\nabla^*\nabla$ is the connection Laplacian on $E$ and $\calR$ is a bundle map which comes from the curvature transformation of $E$.

Operators which have the same principal symbol as a Dirac operator are called \emph{Dirac-type operators}. They can be written as a compatible Dirac operator plus a bundle map on $E$ (called a \emph{potential}). In this article, we will be focusing on a special kind of Dirac-type operators. 

\begin{definition}\label{D:Dirac-Schro}
Let $\calD:\dom\calD\to L^2(E)$ be a formally self-adjoint Dirac-type operator. We call $\calD$ a \emph{Dirac--Schr\"odinger operator} if $\calD^2-\nabla^*\nabla$ is a bundle map which has a uniformly positive lower bound outside a compact subset $K\Subset M$.
Here, $K$ is called an \emph{essential support} of the operator $\calD$.
\end{definition}

\begin{remark}\label{R:Dirac-Schro}
When $M$ is a non-compact manifold without boundary, one can easily see that a Dirac--Schr\"odinger operator is invertible at infinity.\footnote{In this paper, an operator $\calD$ is said to be \emph{invertible} (or \emph{coercive}) if there exists some constant $C>0$ such that $\|\calD s\|^2_{L^2(E)}\ge C\|s\|^2_{L^2(E)}$ for each $s\in\dom\calD$. $\calD$ being invertible at infinity means that the above estimate holds for each $s\in\dom\calD$ supported outside a compact subset.} Therefore, it has zero in its discrete spectrum, i.e., the operator is Fredholm. 
\end{remark}

\begin{example}\label{Eg:Dirac-Schro-1}
Let $(M,g)$ be a Riemannian spin manifold and $\slaS\to M$ be the (complex) spinor bundle endowed with its canonical Hermitian connection induced by the Levi--Civita connection. There is a spin Dirac operator (or Atiyah--Singer operator) $\slaD_\slaS$ acting on sections of $\slaS$ (see Berline--Getzler--Vergne \cite[Chapter 3]{BeGeVe}, Lawson--Michelsohn \cite[Chapter II]{LawMic89}). When $\dim M$ is even, there is a $\ZZ_2$-grading $\slaS=\slaS^+\oplus\slaS^-$, with respect to which $\slaD_\slaS$ is $\ZZ_2$-graded
\[
\slaD_\slaS=\left(
\begin{matrix}
0 & \slaD_\slaS^- \\
\slaD_\slaS^+ & 0
\end{matrix}
\right),
\]
so that $\slaD_\slaS^+$ and $\slaD_\slaS^-$ are formal adjoint to each other.

For spin Dirac operators, one has the Lichnerowicz formula
\[
\slaD_\slaS^2=(\nabla^\slaS)^*\nabla^\slaS+\frac{\kappa}{4},
\]
where $\kappa$ is the scalar curvature associated to $g$. Suppose $g$ is a metric of \emph{uniformly} positive scalar curvature outside a compact subset. Then $\slaD_\slaS$ is a Dirac--Schr\"odinger operator.

In general, let $\slaD_{\slaS\otimes F}$ be a twisted spin Dirac operator. Then
\begin{equation}\label{E:Lich for-1}
\slaD_{\slaS\otimes F}^2=(\nabla^{\slaS\otimes F})^*\nabla^{\slaS\otimes F}+\frac{\kappa}{4}+\calR^F,
\end{equation}
where $\calR^F=\sum_{i<j}\rmc(e_i)\rmc(e_j)R^F(e_i,e_j)$, and $R^F=(\nabla^F)^2$ denotes the curvature operator. If $\frac{\kappa}{4}+\calR^F$ is uniformly positive outside a compact subset (which automatically holds when $F$ is a flat bundle), then $\slaD_{\slaS\otimes F}$ is again a Dirac--Schr\"odinger operator.
\end{example}

\begin{example}\label{Eg:Dirac-Schro-2}
Let $\calD=D+\Psi$, where $D$ is a compatible Dirac operator and $\Psi\in\End(E)$ is a self-adjoint bundle map. $\calD$ is called a \emph{Callias-type operator} if roughly speaking, $\calD^2-D^2$ is a bundle map which has a uniformly positive lower bound outside a compact subset. In this case, by choosing a suitable potential $\Psi$ which satisfies certain growth condition, one can make $\calD$ a Dirac--Schr\"odinger operator.
\end{example}

\subsection{The relative eta invariant}\label{SS:rel-eta}

The eta invariant was first introduced by Atiyah--Patodi--Singer \cite{APS1} in their celebrated APS index formula. It measures the spectral asymmetry of a self-adjoint operator on a closed manifold. On a non-compact manifold, the eta invariant usually cannot be defined. In \cite{Shi22}, the author establishes the notion of relative eta invariant on two non-compact manifolds which coincide at infinity.

For $j=0,1$, let $\calD_j$ be a Dirac--Schr\"odinger operator acting on sections of $E_j$ over a non-compact complete Riemannian manifold $M_j$. Suppose that outside two compact subsets $K_0\Subset M_0$ and $K_1\Subset M_1$, the manifolds $M_0$ and $M_1$ are isometric, the bundles $E_0$ and $E_1$ are isomorphic so that $\calD_0$ and $\calD_1$ coincide at infinity.

\begin{proposition}[\cite{Shi22}]\label{P:rel-eta}
Let $M_0$ and $M_1$ be two non-compact complete Riemannian manifolds without boundary, $\calD_0$ and $\calD_1$ be two Dirac--Schr\"odinger operators on $(M_0,E_0)$ and $(M_1,E_1)$, respectively which coincide at infinity. Consider the \emph{relative eta function}
\begin{equation}\label{E:rel-eta func}
\eta(s;\calD_1,\calD_0)= 
\frac{1}{\Gamma((s+1)/2)}\int_0^\infty t^{(s-1)/2}\Tr\big(\calD_1e^{-t\calD_1^2}-\calD_0e^{-t\calD_0^2}\big)dt.
\end{equation}
If $M_0$ and $M_1$ have bounded sectional curvature, then $\eta(s;\calD_1,\calD_0)$ is well-defined when $\Re(s)$ is large and admits a meromorphic continuation to the whole complex plane. Moreover, it is regular at $s=0$.
\end{proposition}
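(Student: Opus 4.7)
The plan is to establish the three assertions in order: well-definedness of the integrand $\Tr(\calD_1 e^{-t\calD_1^2}-\calD_0 e^{-t\calD_0^2})$ for every $t>0$, absolute convergence of the Mellin-type integral for $\Re(s)$ large, and meromorphic continuation together with regularity at $s=0$. The guiding principle throughout is that although neither $\Tr(\calD_j e^{-t\calD_j^2})$ makes sense individually on the non-compact $M_j$, the difference ``localizes'' to the compact exhausting sets $K_0\cup K_1$ up to errors that are exponentially small in $1/t$ (for small $t$) and $t$ (for large $t$), so that the compact theory can be ported over.

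First I would verify trace-class: writing $k_j(t,x,y)$ for the Schwartz kernel of $e^{-t\calD_j^2}$, finite propagation speed combined with the Cheeger--Gromov--Taylor Gaussian off-diagonal bounds available under bounded sectional curvature yields
\[
\bigl|\calD_1 k_1(t,x,x)-\calD_0 k_0(t,x,x)\bigr|\;\leq\; C_t\,\exp\!\bigl(-c\,d(x,K_0\cup K_1)^2/t\bigr)
\]
for $x$ in the common end (after isometric identification of the bundles), and integration against the volume form gives a finite trace. For the large-$t$ regime, Remark~\ref{R:Dirac-Schro} provides a spectral gap $(-\delta,\delta)\cap\sigma(\calD_j)\subset\{0\}$ for some $\delta>0$; since $\calD_j$ annihilates its kernel, $\|\calD_j e^{-t\calD_j^2}\|_{(\ker\calD_j)^\perp}\leq \delta\,e^{-t\delta^2}$, and together with the relative trace-class bound this forces $\Tr(\calD_1 e^{-t\calD_1^2}-\calD_0 e^{-t\calD_0^2})$ to decay exponentially as $t\to\infty$. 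Consequently $\int_1^\infty t^{(s-1)/2}\Tr(\cdots)\,dt$ is entire in $s$.

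The heart of the argument is the small-$t$ regime. Using a compactly supported pseudodifferential parametrix on each $M_j$ and a Duhamel comparison between $k_j$ and the heat kernel of a reference operator obtained by gluing $\calD_0$ and $\calD_1$ to suitably chosen closed doubles, together with the off-diagonal bounds of Step~1, I would extract an asymptotic expansion
\[
\Tr\bigl(\calD_1 e^{-t\calD_1^2}-\calD_0 e^{-t\calD_0^2}\bigr)\;\sim\;\sum_{k\geq 0} a_k\, t^{(k-n-1)/2}\quad\text{as } t\to 0^+,
\]
where $n=\dim M_j$ and each coefficient $a_k$ is a difference of integrals over $K_1$ and $K_0$ of purely local invariants built from the full symbol of $\calD_j$; the contributions from the common end cancel in the formal expansion and, by the finite-propagation estimate, also in the actual trace up to $O(t^\infty)$. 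Substituting this expansion into $\int_0^1 t^{(s-1)/2}\Tr(\cdots)\,dt$, isolating finitely many leading terms (which integrate to explicit rational functions of $s$ with simple poles at half-integer points) and bounding the remainder, delivers the meromorphic continuation of $\eta(s;\calD_1,\calD_0)$ to all of $\CC$.

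Regularity at $s=0$ is the content of the last claim. The only potential pole at $s=0$ comes from the coefficient $a_n$, namely the $t^{-1/2}$ term in the small-$t$ expansion. By the classical Atiyah--Patodi--Singer parity argument (refined by Bismut--Freed and Gilkey), the pointwise local invariant that produces the $t^{-1/2}$ coefficient of $\tr(\calD\,e^{-t\calD^2})(x,x)$ vanishes identically for a self-adjoint Dirac-type operator; since this statement is local, it applies separately to the two local pieces making up $a_n$, forcing $a_n=0$. The main obstacle in this program is Step~3: pushing the compact small-time parametrix analysis to the non-compact setting requires that the off-diagonal Gaussian estimates be made uniform in $t$ over the common end, so that the formal cancellation of local symbols translates into an actual $O(t^\infty)$ tail. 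The bounded-sectional-curvature hypothesis is used precisely here to ensure the required uniform parametrix and heat-kernel bounds; once these are in place, the remaining steps are standard Mellin-transform bookkeeping combined with the classical local vanishing.
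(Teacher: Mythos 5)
This proposition is quoted from \cite{Shi22}; the present paper gives no proof of it, so your proposal can only be measured against the argument that reference is known to use. Your overall architecture does match it: relative trace-class via off-diagonal Gaussian/finite-propagation estimates under bounded curvature (exactly what Remark~\ref{R:rel-eta} points to via Bunke's work), large-$t$ decay from the spectral gap supplied by Fredholmness, and a small-$t$ expansion obtained by comparison with closed doubles (the same device the paper re-uses in the proof of Theorem~\ref{T:sp flow-trans}). Two of your steps are glossed in ways that are repairable but not automatic: (a) integrability of the diagonal of $\calD_1k_1-\calD_0k_0$ does not by itself give trace-class — one must factor the heat operators and produce Hilbert--Schmidt pieces; and (b) for the large-$t$ decay of the \emph{difference}, the operator-norm bound $\delta e^{-t\delta^2}$ on $(\ker\calD_j)^\perp$ cannot simply be multiplied against a relative trace-norm bound, since the two heat semigroups belong to different operators; the standard telescoping $A_1B_1-A_0B_0=(A_1-A_0)B_1+A_0(B_1-B_0)$ plus the finite-rank spectral projection near $0$ is needed and is not spelled out.

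The genuine gap is in your treatment of regularity at $s=0$. You reduce it to the claim that the \emph{pointwise} density producing the $t^{-1/2}$ coefficient of $\tr(\calD e^{-t\calD^2})(x,x)$ vanishes identically for any self-adjoint Dirac-type operator, and you apply this ``local statement'' separately to the two pieces of $a_n$. That pointwise vanishing is a theorem of Bismut--Freed for \emph{compatible} Dirac operators; a Dirac--Schr\"odinger operator is a compatible Dirac operator \emph{plus a potential}, and for such operators the classical results (Atiyah--Patodi--Singer~III, Gilkey) only assert the vanishing of the \emph{global} residue $\int a_n(x)\,dx$ on a closed manifold, proved by non-local (cobordism/K-theoretic) arguments. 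So the step ``the local invariant vanishes, hence each piece of $a_n$ vanishes'' is not justified for the class of operators in the proposition. The correct route — which your own Step~3 already sets up — is to identify the relative coefficient $a_n$ with the difference $a_n(\widehat{\calD}_1)-a_n(\widehat{\calD}_0)$ of coefficients of operators on the two closed doubles (the densities agree on the common part, so only the compact pieces contribute), and then invoke the \emph{global} closed-manifold regularity theorem for each double. As written, your final step rests on a local vanishing statement that is stronger than what is available for Dirac-type operators with potential. (A minor further slip: with the expansion $\sum a_k t^{(k-n-1)/2}$ the candidate poles of $\eta(s;\calD_1,\calD_0)$ sit at the integers $s=n-k$, not at half-integers; this does not affect the continuation argument.)
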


Due to Proposition~\ref{P:rel-eta}, the following definition can be made.

\begin{definition}\label{D:rel-eta}
In view of Proposition~\ref{P:rel-eta}, the \emph{relative eta invariant} associated to $\calD_0$ and $\calD_1$ is defined to be $\eta(0;\calD_1,\calD_0)$. For simplicity, we denote it by $\eta(\calD_1,\calD_0)$. 

In the case that the relative eta function may not be regular at $s=0$ (for example, on manifolds with boundary, see Lemma~\ref{L:rel-eta bdry}), the relative eta invariant is defined to be the constant term in the Laurent expansion of the relative eta function at $s=0$.

The \emph{reduced relative eta invariant} is defined to be
\[
\xi(\calD_1,\calD_0):=\frac{1}{2}\left(\eta(\calD_1,\calD_0)+\dim\ker\calD_1-\dim\ker\calD_0\right).
\]
\end{definition}

The relative eta invariant, as the name suggests, can be thought of as the difference of two individual eta invariants. In particular,
\begin{equation}\label{E:rel-eta prop}
\eta(\calD_0,\calD_0)=0,\qquad
\eta(\calD_2,\calD_1)+\eta(\calD_1,\calD_0)=\eta(\calD_2,\calD_0).
\end{equation}

\subsection{Relative eta invariants on manifolds with boundary}\label{SS:rel-eta bdry}

Like eta invariant, the relative eta invariant also possesses a gluing formula. In this subsection, we recall the basic setting as in Kirk--Lesch \cite{KirkLesch04} and Shi \cite{Shi22}. Let $\calD_0$ and $\calD_1$ be two Dirac--Schr\"odinger operators on $(M_0,E_0)$ and $(M_1,E_1)$, respectively, as above. 
Let $\Sigma_0\cong\Sigma_1\cong\Sigma$ be a common closed hypersurface of $M_0$ and $M_1$ with trivial normal bundle. Assume $\Sigma_j$ ($j=0,1$) lies outside the compact set $K_j$ of last subsection,
and $(M_j,E_j)$ has product structure near $\Sigma$ so that $\calD_j$ has the form $\calD_j=\rmc(\nu)(\p_u+\calB)$ in a tubular neighbourhood $[-\varepsilon,\varepsilon]\times\Sigma$ of $\Sigma$, where $\nu$ is a unit normal vector field along $\Sigma$ and $\calB$ is a self-adjoint Dirac-type operator on $\Sigma$. Note that since $\calD_j$ is formally self-adjoint, one always has $\rmc(\nu)\calB=-\calB\rmc(\nu)$. Now let $M_j^{\rm cut}$ denote the manifold with boundary obtained by cutting $M_j$ along $\Sigma$. Then under the identification
\[
L^2(E_j|_{\p M_j^{\rm cut}})=L^2(E_j|_\Sigma)\oplus L^2(E_j|_\Sigma),
\]
the resulting operator on $M_j^{\rm cut}$, which for simplicity, we adopt a slight abuse of notation and still denote by $\calD_j$, can be written as
\begin{equation}\label{E:decomp of D}
\calD_j=\left(
\begin{matrix}
\rmc(\nu) & 0 \\
0 & -\rmc(\nu)
\end{matrix}
\right)\left(\p_u+
\left(
\begin{matrix}
\calB & 0 \\
0 & -\calB
\end{matrix}
\right)\right)
=:\tilde{\rmc}(\nu)\left(\p_u\,+\,\widetilde{\calB}\right),
\end{equation}
near the boundary $\pM_j^{\rm cut}=\Sigma\oplus\Sigma$.

\begin{assumption}\label{A:bounded geom}
For $j=0,1$, assume that
\begin{enumerate}
\item $(M_j,E_j)$ has \emph{bounded geometry} of order $m>\dim M_0/2$, i.e., $M_j$ has uniformly positive injectivity radius, and the curvature tensors of $M_j$ and $E_j$ along with their covariant derivatives up to order $m$ are uniformly bounded. \label{IT:bounded geom-1}

\item There exists a constant $C>0$ such that for all integers $1\le k\le m$ and $s\in\dom(\calD_j^k)$,
\[
 \Vert\calD_j^ks\Vert_{L^2(E_j)}^2+\Vert s\Vert_{L^2(E_j)}^2\ge C\,\Vert D_j^ks\Vert_{L^2(E_j)}^2,
\]
where $D_j$ is the corresponding compatible Dirac operator (i.e., without potential), $\calD_j^k$ and $D_j^k$ denote the $k$-th power of $\calD_j$ and $D_j$, respectively. \label{IT:bounded geom-2}
\end{enumerate}
\end{assumption}

\begin{remark}\label{R:bounded geom}
If $\calD_j$ is the spin or twisted spin Dirac operator on a spin manifold as in Example~\ref{Eg:Dirac-Schro-1}, then it is itself a compatible Dirac operator. In this case, the estimate in Assumption~\ref{A:bounded geom}.\ref{IT:bounded geom-2} is automatically satisfied.
\end{remark}

On $M_j^{\rm cut}$, we impose two natural boundary conditions on $\calD_j$. One is called the \emph{continuous transmission boundary condition}, which corresponds to the domain
\[
\dom(\calD_{j,\Delta})=\left\{s\in\dom(\calD_{j,\max}):\,s|_{\p M_j^{\rm cut}}=(f,f)\in L^2(E_j|_\Sigma)\oplus L^2(E_j|_\Sigma)\right\},
\]
where $\dom(\calD_{j,\max})$ is the domain of the maximal extension of $\calD_j$ on $M_j^{\rm cut}$ (cf. B\"ar--Ballmann \cite[Example 7.28]{BaerBallmann12}).
Equivalently, let
\[
P_\Delta=\frac{1}{2}\left(
\begin{matrix}
1 & -1 \\
-1 & 1
\end{matrix}\right)
\]
be the continuous transmission projection. The domain can also be written as
\[
\dom(\calD_{j,\Delta})=\left\{s\in\dom(\calD_{j,\max}):\,P_\Delta(s|_{\p M_j^{\rm cut}})=0\right\}.
\]
The other one is called the \emph{Atiyah--Patodi--Singer boundary condition}. Suppose there exists a Lagrangian subspace $L\subset\ker\calB$, namely, $L$ is a subspace of $\ker\calB$ satisfying $\rmc(\nu)L=L^\perp\cap\ker\calB$.\footnote{By the fact that $\rmc(\nu)$ anti-commutes with $\calB$, the action of $\rmc(\nu)$ induces a skew-Hermitian form on $\ker\calB$ by $\omega(s,s'):=\langle s,\rmc(\nu)s'\rangle_{L^2(E_j|_{\Sigma})}$. A Lagrangian subspace exists if and only if the $\sqrt{-1}$ and $-\sqrt{-1}$ eigenspaces of $\rmc(\nu)$ in $\ker\calB$ have the same dimension. In particular, if $(\Sigma,\calB)$ bounds $(X,\calD)$, where $X$ is a compact manifold with boundary and $\calD$ is a formally self-adjoint Dirac-type operator on $X$, then by the cobordism invariance of the index, such Lagrangian subspaces always exist. See \cite[Section~2]{KirkLesch04} for details.\label{FN:Lagran}}
Let $\Pi^+(\calB)$ be the spectral projection onto the eigenspaces corresponding to positive eigenvalues of $\calB$, and $P_L$ be the $L^2$-orthogonal projection onto $L$. Denote
\[
P_\aps(L):=\left(
\begin{matrix}
P^+_L & 0 \\
0 & 1-P^+_L
\end{matrix}\right),
\]
where $P^+_L=\Pi^+(\calB)+P_L$. Then the domain is given by
\[
\dom(\calD_{j,\aps})=\left\{s\in\dom(\calD_{j,\max}):\,P_\aps(L)(s|_{\p M_j^{\rm cut}})=0\right\}.
\]

Consider a path connecting the above two boundary conditions with the following domain
\[
\dom(\calD_{j,\theta})=\left\{s\in\dom(\calD_{j,\max}):\,P_\theta(s|_{\p M_j^{\rm cut}})=0\right\},\quad 0\le\theta\le\frac{\pi}{4},
\]
where (as in Kirk--Lesch \cite{KirkLesch04})
\begin{equation}\label{E:bdry proj}
P_\theta:=\left(
\begin{matrix}
P^+_L\cos^2\theta+(1-P^+_L)\sin^2\theta & -\cos\theta\sin\theta \\
-\cos\theta\sin\theta & (1-P^+_L)\cos^2\theta+P^+_L\sin^2\theta
\end{matrix}\right).
\end{equation}
It is clear that $\calD_{j,0}=\calD_{j,\aps}$ and $\calD_{j,\pi/4}=\calD_{j,\Delta}$. 

For $j=0,1$ and $\theta\in[0,\pi/4]$, let
\[
B_{j,\theta}:=\big\{s|_{\p M_j^{\rm cut}}:s\in\dom(\calD_{j,\theta})\big\}
\]
be the space of boundary values. Then $B_{j,\theta}\subset H^{1/2}(E_j|_{\p M_j^{\rm cut}})$ is a self-adjoint elliptic boundary condition in the sense of B\"ar--Ballmann \cite{BaerBallmann12,BaerBallmann16}. It then follows from \cite[Section 8]{BaerBallmann12} that each $\calD_{j,\theta}$ is a self-adjoint Dirac--Schr\"odinger operator, thus is Fredholm. Using the arguments in \cite[Lemma~6.5, Proposition~6.6]{Shi22}, one can show that the relative eta invariant can still be defined with such boundary conditions.

\begin{lemma}\label{L:rel-eta bdry}
Under Assumption~\ref{A:bounded geom}, the relative eta function $\eta(s;\calD_{1,\theta'},\calD_{0,\theta})$ as in \eqref{E:rel-eta func} is well-defined when $\Re(s)$ is large and admits a meromorphic continuation to the whole complex plane for $\theta,\theta'\in[0,\pi/4]$. Therefore, the relative eta invariant $\eta(\calD_{1,\theta'},\calD_{0,\theta})$ exists by Definition~\ref{D:rel-eta}.
\end{lemma}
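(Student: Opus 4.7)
The plan is to adapt the argument of \cite{Shi22} (where the boundaryless case is handled) to the present setting, with the boundary conditions $P_\theta$ interpolating between APS and continuous transmission. The starting point is to split
\[
\eta(s;\calD_{1,\theta},\calD_{0,\theta}) \;=\; \frac{1}{\Gamma((s+1)/2)}\left(\int_0^1 + \int_1^\infty\right) t^{(s-1)/2}\,\Tr\bigl(\calD_{1,\theta}e^{-t\calD_{1,\theta}^2}-\calD_{0,\theta}e^{-t\calD_{0,\theta}^2}\bigr)\,dt
\]
and to analyze the two pieces separately. Throughout, I would use Assumption~\ref{A:bounded geom} to guarantee bounded-geometry heat-kernel estimates (in particular, Gaussian off-diagonal bounds uniform across $M_j^{\rm cut}$), and use the fact that $P_\theta$ is an elliptic, well-posed boundary projection for every $\theta\in[0,\pi/4]$, so that $\calD_{j,\theta}$ is Fredholm.

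For the large-time piece $\int_1^\infty$, the Dirac--Schr\"odinger hypothesis gives $\calD_j^2\ge c>0$ outside the essential support $K_j$, which forces the essential spectrum of $\calD_{j,\theta}^2$ to be bounded below by $c$. Combined with Fredholmness, this yields a spectral gap at $0$, from which one deduces exponential decay in $t$ of the relative heat trace; hence this piece defines an entire function of $s$. For the small-time piece $\int_0^1$, the decisive fact is the locality of the heat kernel: since $\calD_0$ and $\calD_1$ agree on $M_0\setminus K_0\cong M_1\setminus K_1$ and the boundary $\p M_j^{\rm cut}$ and its collar look the same on both sides (via $\widetilde{\calB}$), the trace difference $\Tr(\calD_{1,\theta}e^{-t\calD_{1,\theta}^2}-\calD_{0,\theta}e^{-t\calD_{0,\theta}^2})$ is reduced, up to exponentially small errors, to an integral over a compact neighbourhood of $K_0\cup\Sigma$ (resp.\ $K_1\cup\Sigma$). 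On the part of this neighbourhood away from $\Sigma$ one applies the standard interior heat-kernel expansion, producing an asymptotic series in half-integer powers of $t$; on the collar, one applies the known heat-kernel asymptotics for APS-type boundary value problems attached to the family $P_\theta$ (in the spirit of Kirk--Lesch \cite{KirkLesch04}, using Grubb--Seeley pseudodifferential calculus). Feeding this asymptotic expansion into the Mellin transform $\int_0^1 t^{(s-1)/2}(\cdots)dt$ yields a meromorphic function of $s$ on all of $\CC$, whose poles are determined by the heat coefficients.

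The main obstacle I anticipate is the uniform control of the boundary contribution in $\theta\in[0,\pi/4]$. The projection $P_\theta$ is pseudodifferential rather than local, so the standard local symbol calculus is insufficient; one must work with a parameter-dependent pseudodifferential calculus that accommodates APS-like boundary conditions, and then check that the bounded-geometry estimates from Assumption~\ref{A:bounded geom} survive the reduction to the collar (where the tangential operator $\widetilde{\calB}$ acts on the compact hypersurface $\Sigma$, but the ambient manifold is non-compact). A secondary, but important, point is that unlike the closed (or boundaryless) case, the small-time expansion can produce a genuine pole at $s=0$ coming from the boundary heat coefficients; establishing that this pole is at worst simple — so that the Laurent constant term of Definition~\ref{D:rel-eta} is well-defined — is a further verification, but it does not obstruct the meromorphic continuation stated in the lemma.
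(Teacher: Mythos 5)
Your proposal is correct and is essentially the argument the paper has in mind: the paper offers no proof of this lemma beyond pointing to the method of \cite{Shi22} and noting (Remark~\ref{R:rel-eta}) that the role of Assumption~\ref{A:bounded geom} is to make $\calD_{1,\theta}e^{-t\calD_{1,\theta}^2}-\calD_{0,\theta}e^{-t\calD_{0,\theta}^2}$ trace class, and your split into a large-time piece (spectral gap from Fredholmness plus invertibility at infinity, hence an entire function) and a small-time piece (interior local expansion plus Kirk--Lesch/Grubb--Seeley asymptotics for the family $P_\theta$ on the compact collar, with any pole at $s=0$ absorbed by Definition~\ref{D:rel-eta}) is exactly that method adapted to the boundary setting. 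One simplification worth exploiting: since $\Sigma$ lies outside $K_j$, the collar geometry, the tangential operator $\widetilde{\calB}$, and the projection $P_\theta$ are literally identical for $j=0,1$, so the boundary contributions cancel in the relative trace up to $O(e^{-c/t})$ errors as $t\to0$, which turns the delicate parameter-dependent boundary calculus you worry about into a comparison argument.
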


\begin{remark}\label{R:rel-eta}
The assumptions in Proposition~\ref{P:rel-eta} and Lemma~\ref{L:rel-eta bdry} are mainly used to guarantee that $\calD_1e^{-t\calD_1^2}-\calD_0e^{-t\calD_0^2}$ (or $\calD_{1,\theta'}e^{-t\calD_{1,\theta'}^2}-\calD_{0,\theta}e^{-t\calD_{0,\theta}^2}$) is a trace-class operator. See Bunke \cite{Bunke92,Bunke93comparison}.


Note that the relative trace in \eqref{E:rel-eta func} can be separated as individual compact parts and a relative non-compact part, which can then be written as the integration of pointwise relative trace. In the case that $\calD_0$ (or $\calD_1$, $\calD_{0,\theta}$, $\calD_{1,\theta'}$) is replaced by a conjugation $U_0^*\calD_0U_0$ by a unitary operator $U_0$, if outside a compact subset, $U_0$ is a fiberwise unitary bundle endomorphism and $U_0^*\calD_0U_0=\calD_0$, then the relative eta function is unchanged.

\end{remark}

From the above construction, the operator $\calD_{j,\Delta}=\calD_{j,\pi/4}$ on $M_j^{\rm cut}$ can be just identified with the original operator $\calD_j$ on $M_j$. In this perspective, the gluing formula can be formulated as the following.

\begin{theorem}[Gluing formula]\label{T:gluing}
Let $M_0$ and $M_1$ be two non-compact complete Riemannian manifolds without boundary, $\calD_0$ and $\calD_1$ be two Dirac--Schr\"odinger operators on $(M_0,E_0)$ and $(M_1,E_1)$, respectively, which coincide at infinity. Let $\Sigma\subset M_j$ be a cutting hypersurface chosen as above such that the metrics, bundles and operators are product forms near $\Sigma$ and let $\calD_{j,\theta}$ be the resulting operator on $M_j^{\rm cut}$ with boundary condition determined by $P_\theta$ in \eqref{E:bdry proj}.

Suppose there exists a Lagrangian subspace $L$ of $\ker\calB$. If $\calD_j$ ($j=0,1$) satisfies Assumption~\ref{A:bounded geom} and has an \emph{empty} essential support, then $\eta(\calD_{1,\theta'},\calD_{0,\theta})$ is constant for $\theta,\theta'\in[0,\pi/4]$. In particular,
\[
\eta(\calD_1,\calD_0)=\eta(\calD_{1,\aps},\calD_{0,\aps}),
\]
where the APS boundary condition is associated to $L$.
\end{theorem}

When $\Sigma$ cuts $M_j$ into two parts with one part being compact, the condition that $\ker\calB$ admits a Lagrangian subspace is automatically satisfied as explained in footnote \ref{FN:Lagran}. In this case, one deduces the following additivity for the relative eta invariant.

\begin{corollary}\label{C:splitting}
For $j=0,1$, let $M_j$ be as in Theorem~\ref{T:gluing} and $\calD_j$ be Dirac--Schr\"odinger operators on $M_j$ which coincide at infinity. Let $\Sigma\subset M_j$ be a cutting hypersurface chosen as in the beginning of Subsection~\ref{SS:rel-eta bdry} such that the metrics, bundles and operators are product forms near $\Sigma$.

Suppose that $M_j$ is partitioned by $\Sigma$ into two components, i.e., $M_j^{\rm cut}=M_j'\cup_\Sigma M_j''$, where $M_j'$ is compact and $M_0''\cong M_1''$. If $\calD_j$ ($j=0,1$) satisfies Assumption~\ref{A:bounded geom} and has an \emph{empty} essential support, then
\[
\eta(\calD_1,\calD_0)=\eta(\calD'_{1,\aps})-\eta(\calD'_{0,\aps}),
\]
where $\calD'_{j,\aps}$ denotes the operator $\calD_j$ restricted to $M_j'$ with the APS boundary condition (associated to a Lagrangian subspace $L$ of $\ker\calB$), and the two terms in the right-hand side are usual eta invariants on compact manifolds with boundary.
\end{corollary}

\subsection{Proof of the gluing formula}\label{SS:pf-gluing}

In last subsection, we have constructed a family of self-adjoint Fredholm operators $\calD_{j,\theta}:\dom(\calD_{j,\theta})\to L^2(E_j|_{M_j^{\rm cut}})$. In the following, we will think of $\calD_{j,\theta}$ as operators that act on a fixed domain. This is because there is actually a continuous family of unitary operators $\Omega_{j,\theta}$ on $L^2(E_j|_{M_j^{\rm cut}})$ which map $\dom(\calD_{j,0})$ to $\dom(\calD_{j,\theta})$ (cf. \cite[Section~6.2]{Shi22}). Therefore, one can identify each $\calD_{j,\theta}$ with $\Omega_{j,\theta}^*\calD_{j,\theta}\Omega_{j,\theta}:\dom(\calD_{j,0})\to L^2(E_j|_{M_j^{\rm cut}})$. In fact, each $\Omega_{j,\theta}$ is equal to the identity map outside a compact subset containing a neighborhood of $\Sigma$, so that conjugation by $\Omega_{j,\theta}$ preserves the value of the relative eta invariant (cf. Remark~\ref{R:rel-eta}).

From this discussion, the path $\calD_{j,\theta}$ connecting $\calD_{j,\aps}$ and $\calD_{j,\Delta}$ corresponds to a graph continuous family of self-adjoint Fredholm operators. Thus one can define the \emph{spectral flow} of $\calD_{j,\theta}$, $\theta\in[\underline{\theta},\bar{\theta}]$, denoted by $\spf(\calD_{j,\theta})_{[\underline{\theta},\bar{\theta}]}$, to be the difference of the number of eigenvalues that change from negative to non-negative and the number of eigenvalues that change from non-negative to negative as $\theta$ varies from $\underline{\theta}$ to $\bar{\theta}$ (cf. \cite{BoossLeschPhillips05}).

On the other hand, as shown in \cite[Proposition~6.8]{Shi22}, which generalizes \cite[Theorem~5.8]{Shi22} to manifolds with boundary, for a fixed $\theta_0$, the mod $\ZZ$ reduction of the relative eta invariant, denoted by $\oeta(\calD_{1,\theta},\calD_{0,\theta_0})$, depends smoothly on $\theta$. Moreover, the proof of \cite[Proposition~5.10]{Shi22} can be repeated verbatim in the case of manifolds with boundary, so that the following equation holds.

\begin{lemma}\label{L:rel-eta-spf}
Let the notations be as above. Then for $0\le\underline{\theta}\le\bar{\theta}\le\pi/4$,
\[
\xi(\calD_{1,\bar{\theta}},\calD_{0,\theta_0})-\xi(\calD_{1,\underline{\theta}},\calD_{0,\theta_0}) 
-\frac{1}{2}\int_{\underline{\theta}}^{\bar{\theta}}\left(\frac{d}{d\theta}\oeta(\calD_{1,\theta},\calD_{0,\theta_0})\right)d\theta=\spf(\calD_{1,\theta})_{[\underline{\theta},\bar{\theta}]}.
\]
\end{lemma}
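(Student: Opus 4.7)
The plan is to reduce the identity to a local analysis on a partition of $[\underline\theta,\bar\theta]$ where the zero-crossings of eigenvalues of $\calD_{j,\theta}$ can be controlled, and then match the jumps of $\xi$ against the spectral flow while using the smoothness of $\oeta$ for the continuous variation.

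First, using the fixed-domain identification via $K_\theta$ recalled above, I would view each family $\{\calD_{j,\theta}\}$ as a graph-continuous family of self-adjoint Fredholm operators on a single Hilbert space. By the Kato Selection Theorem and compactness of $[\underline\theta,\bar\theta]$, choose a partition $\underline\theta = \theta_0 < \theta_1 < \cdots < \theta_N = \bar\theta$ and a constant $\delta>0$ such that on each $I_i = [\theta_i,\theta_{i+1}]$ and for $j=0,1$: the spectrum of $\calD_{j,\theta}$ in $(-\delta,\delta)$ is described by finitely many continuous eigenvalue curves $\lambda_{j,k}(\theta)$; the values $\pm\delta$ never lie in the spectrum; and the partition points $\theta_i$ themselves are not zero-crossings. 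Then on each $I_i$ I would use the spectral projection of $\calD_{j,\theta}^2$ onto $[0,\delta^2)$ to split $\calD_{j,\theta} = A_{j,\theta}\oplus B_{j,\theta}$, with $A_{j,\theta}$ finite-dimensional carrying the eigenvalues $\lambda_{j,k}(\theta)$ and $B_{j,\theta}$ having spectrum of absolute value at least $\delta$ uniformly in $\theta\in I_i$.

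Second, this decomposition splits the integrand of \eqref{E:rel-eta func} into an $A$-part, which is a finite sum whose value at $s=0$ equals $\sum_k\operatorname{sign}\lambda_{1,k}(\theta) - \sum_k\operatorname{sign}\lambda_{0,k}(\theta)$, and a $B$-part, whose value at $s=0$ defines a real-analytic function of $\theta\in I_i$ (the uniform spectral gap gives exponential small-$t$ decay, while the trace-class structure of the $B$-difference carries over from Proposition~\ref{P:rel-eta} along the lines of \cite{Bunke92,Bunke93comparison}). Combined with $\dim\ker\calD_{j,\theta} = \#\{k:\lambda_{j,k}(\theta)=0\}$, this yields for $\theta\in I_i$ the pointwise identity
\[
2\xi(\calD_{1,\theta},\calD_{0,\theta}) = (B\text{-part})(\theta) + \sum_k\bigl(\operatorname{sign}\lambda_{1,k}(\theta) + \1_{\lambda_{1,k}(\theta)=0}\bigr) - \sum_k\bigl(\operatorname{sign}\lambda_{0,k}(\theta) + \1_{\lambda_{0,k}(\theta)=0}\bigr).
\]

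Finally, the elementary observation that $\tfrac12(\operatorname{sign}\lambda(\theta)+\1_{\lambda(\theta)=0})$ jumps by $+1$ when a continuous curve $\lambda(\theta)$ crosses zero from negative to non-negative and by $-1$ in the opposite direction implies that the total change over $I_i$ of $\tfrac12\sum_k(\operatorname{sign}\lambda_{j,k}+\1_{\lambda_{j,k}=0})$ equals $\spf(\calD_{j,\tau})_{I_i}$. Hence the function $\theta\mapsto\xi(\calD_{1,\theta},\calD_{0,\theta}) - \spf(\calD_{1,\tau})_{[\theta_i,\theta]} + \spf(\calD_{0,\tau})_{[\theta_i,\theta]}$ differs from $\tfrac12$ times the $B$-part by a constant on $I_i$, so is smooth there. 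Since the $A$-part takes integer values, $\oeta(\calD_{1,\theta},\calD_{0,\theta})$ admits a smooth lift on $I_i$ coinciding with the $B$-part up to an integer, so this combination has derivative $\tfrac12\tfrac{d}{d\theta}\oeta(\calD_{1,\theta},\calD_{0,\theta})$. Integrating over each $I_i$, telescoping across $i$, and invoking additivity of the spectral flow yield the identity. The main technical obstacle is the second step: justifying the splitting $\eta(s;\calD_{1,\theta},\calD_{0,\theta}) = (A\text{-part}) + (B\text{-part})$ at the level of the relative heat regularization, i.e.\ showing that the $B$-difference $B_{1,\theta}e^{-tB_{1,\theta}^2} - B_{0,\theta}e^{-tB_{0,\theta}^2}$ is trace-class with smooth-in-$\theta$ zeta value at $s=0$; this requires redoing, with parameters and with an extra finite-rank correction from the small-eigenvalue projection, the trace-class estimates underlying Proposition~\ref{P:rel-eta}.
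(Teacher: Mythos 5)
Your argument is correct and is essentially the standard proof that the paper relies on for this lemma (the text defers to \cite[Proposition~5.10]{Shi22} and the Kirk--Lesch framework rather than giving details): split off the finitely many small eigenvalues by a continuous spectral projection, match the integer jumps of the tracking function $\tfrac12(\operatorname{sign}\lambda+\1_{\lambda=0})$ against the spectral flow with the paper's sign convention, and identify the continuous variation with $\tfrac12\tfrac{d}{d\theta}\oeta$ through the smooth large-eigenvalue part. The only imprecision is that on $M_j^{\rm cut}$ the relative eta function need not be regular at $s=0$ (cf.\ Definition~\ref{D:rel-eta}), so the ``value at $s=0$'' of your $B$-part should be read as the constant term of its Laurent expansion; this does not affect the argument, since the $A$-part is entire in $s$ and the smoothness in $\theta$ you flag as the main technical point is exactly what the paper asserts for $\oeta(\calD_{1,\theta},\calD_{0,\theta})$ following \cite{Shi22}.
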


Since it has been shown by Br\"uning--Lesch in \cite{BruningLesch99} that $\frac{d}{d\theta}\oeta(\calD_{1,\theta},\calD_{0,\theta_0})$ vanishes, we conclude that
\begin{equation}\label{E:rel-eta-spf}
\xi(\calD_{1,\bar{\theta}},\calD_{0,\theta_0})-\xi(\calD_{1,\underline{\theta}},\calD_{0,\theta_0})=\spf(\calD_{1,\theta})_{[\underline{\theta},\bar{\theta}]}.
\end{equation}
Similarly,
\begin{equation}\label{E:rel-eta-spf 2}
\xi(\calD_{1,\theta_0},\calD_{0,\bar{\theta}})-\xi(\calD_{1,\theta_0},\calD_{0,\underline{\theta}})=\spf(\calD_{0,\theta})_{[\underline{\theta},\bar{\theta}]}.
\end{equation}
In order to prove Theorem~\ref{T:gluing}, it suffices to show the right-hand sides of \eqref{E:rel-eta-spf} and \eqref{E:rel-eta-spf 2} vanish, which can be deduced by the invertibility of $\calD_{j,\theta}$.

\begin{lemma}\label{L:invertibility}
If $\calD_j$ ($j=0,1$) is a Dirac--Schr\"odinger operator which has an empty essential support, then, for any $\theta\in[0,\pi/4]$, the operator $\calD_{j,\theta}$ is invertible.
\end{lemma}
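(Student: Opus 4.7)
The plan is to prove that $\ker\calD_{j,\theta}=0$ for every $\theta\in[0,\pi/4]$; since $\calD_{j,\theta}$ is self-adjoint and Fredholm (as recalled in Subsection~\ref{SS:pf-gluing}), this immediately upgrades to invertibility.

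Take $s\in\dom(\calD_{j,\theta})$ with $\calD_{j,\theta}s=0$. Because the essential support is empty, the Weitzenb\"ock remainder $\calR':=\calD_j^2-\nabla^*\nabla$ satisfies $\calR'\ge c>0$ uniformly on all of $M_j$, not merely at infinity. Pointwise, $\calD_j^2s=0$ gives $\nabla^*\nabla s=-\calR's$; pairing with $s$ and applying Green's formula on $M_j^{\rm cut}$ with outward unit normal $\nu$ yields
\[
\|\nabla s\|^2+\int_{M_j^{\rm cut}}\langle\calR's,s\rangle=\int_{\p M_j^{\rm cut}}\langle\nabla_\nu s,s\rangle.
\]
The spectral gap of $\calD_j$ away from a compact set forces exponential decay of $s$ at infinity, which justifies the integration by parts on the non-compact piece.

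Next I exploit the product collar $\calD_j=\tilde{\rmc}(\nu)(\p_u+\widetilde{\calB})$ near $\p M_j^{\rm cut}$. The equation $\calD_js=0$ gives $\p_u s=-\widetilde{\calB}s$ on the boundary, so $\langle\nabla_\nu s,s\rangle$ reduces to a quadratic form in $s|_{\p M_j^{\rm cut}}$ expressible through $\widetilde{\calB}=\mathrm{diag}(\calB,-\calB)$; writing $s|_{\p M_j^{\rm cut}}=(f_-,f_+)$, this boundary form equals (up to a standard sign coming from the collar computation) $\langle\calB f_-,f_-\rangle-\langle\calB f_+,f_+\rangle$. The heart of the argument is then to show that this quantity is non-positive for every $(f_-,f_+)\in\ker P_\theta$ with $\theta\in[0,\pi/4]$. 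Decomposing $f_\pm=P^+_Lf_\pm+(1-P^+_L)f_\pm$ and reading off \eqref{E:bdry proj}, the constraint $P_\theta(f_-,f_+)=0$ yields the relations $\cos\theta\cdot P^+_Lf_-=\sin\theta\cdot P^+_Lf_+$ and $\sin\theta\cdot(1-P^+_L)f_-=\cos\theta\cdot(1-P^+_L)f_+$. A direct substitution, together with the facts that $\calB$ is non-negative on $\mathrm{range}(P^+_L)$ and non-positive on $\mathrm{range}(1-P^+_L)$, collapses the boundary form to
\[
\frac{\cos 2\theta}{\cos^2\theta}\bigl(\langle\calB(1-P^+_L)f_-,(1-P^+_L)f_-\rangle-\langle\calB P^+_Lf_+,P^+_Lf_+\rangle\bigr)\le 0,
\]
where the scalar prefactor is non-negative precisely because $\theta\le\pi/4$ and the spectral decomposition of $\calB$ makes the bracket non-positive.

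Feeding this back into the Green identity leaves $\|\nabla s\|^2+c\|s\|^2\le 0$, which forces $s=0$. The main obstacle is the sign analysis of the boundary form in the third paragraph: one must unwind the explicit projection \eqref{E:bdry proj}, invoke the $\calB$-spectral splitting associated with $P^+_L$, and use the bound $\theta\le\pi/4$ to keep $\cos 2\theta\ge 0$. The remaining ingredients---global positivity of $\calR'$ from empty essential support, exponential decay of $s$ justifying Green's formula on the non-compact $M_j^{\rm cut}$, and the passage from $\ker=0$ to invertibility for a self-adjoint Fredholm operator---are routine.
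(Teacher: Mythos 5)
Your proposal is correct and follows essentially the same route as the paper: a Weitzenb\"ock/Green's-formula positivity argument in which the decisive step is checking that the boundary pairing $(\widetilde{\calB}s,s)_{L^2(\p M_j^{\rm cut})}$ is non-positive on $\ker P_\theta$ for $\theta\le\pi/4$, using the spectral splitting of $\calB$ induced by $P^+_L$; your parametrization of $\ker P_\theta$ and the resulting $\cos 2\theta$ factor match the paper's computation exactly. The only cosmetic difference is that the paper proves the coercivity estimate $\|\calD s\|^2\ge a\|s\|^2$ for all compactly supported $s$ satisfying the boundary condition rather than arguing directly on kernel elements with $\calD^2s=0$, which sidesteps your appeal to exponential decay to justify the integration by parts at infinity.
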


\begin{proof}
The following computation is the same for $\calD_0$ and $\calD_1$, so we will suppress the subscript ``$j$''.

Choose any $s\in C^\infty(E|_{M^{\rm cut}})$ with compact support such that $P_\theta(s|_{\p M^{\rm cut}})=0$, where $P_\theta$ is the projection in \eqref{E:bdry proj}. Using the Green's formula\footnote{It is of the form
\[
\langle As,s'\rangle_{L^2(E)}=\langle s,A^*s'\rangle_{L^2(E)}-\langle\sigma_A(\nu)s,s'\rangle_{L^2(E|_\pM)},\quad\mbox{for }s,s'\in C^\infty_c(E),
\]
where $\sigma_A$ is the principal symbol of a first-order differential operator $A$, and $\nu$ is the inward-pointing unit normal vector field along $\pM$.}, the Weitzenb\"ock formula \eqref{E:Weitz for} for $\calD$, \eqref{E:decomp of D}, and the fact that $\calD$ has an empty essential support, one gets
\begin{equation}\label{E:calDs square}
\begin{aligned}
\|\calD s\|_{L^2(E_{M^{\rm cut}})}^2& =\langle\calD s,\calD s\rangle_{L^2(E_{M^{\rm cut}})} \\
& =\langle\calD^2 s,s\rangle_{L^2(E_{M^{\rm cut}})}+\langle\tilde{\rmc}(\nu)\calD s,s\rangle_{L^2(E|_{\pM^{\rm cut}})} \\
& \ge\,a\|s\|^2_{L^2(E_{M^{\rm cut}})}+\langle\nabla^*\nabla s,s\rangle_{L^2(E_{M^{\rm cut}})}-\langle\p_us,s\rangle_{L^2(E|_{\pM^{\rm cut}})} \\
& \quad-\langle\widetilde{\calB}s,s\rangle_{L^2(E|_{\pM^{\rm cut}})},
\end{aligned}
\end{equation}
where $a>0$ is the uniform lower bound of $\calD^2-\nabla^*\nabla$.

We first look at the last term in the right-hand side of \eqref{E:calDs square}. Since $P_\theta(s|_{\p M^{\rm cut}})=0$, one can express $s|_{\p M^{\rm cut}}$ in the form
\[
s|_{\p M^{\rm cut}}=\left(
\begin{matrix}
f^+\sin\theta+f^-\cos\theta \\
f^-\sin\theta+f^+\cos\theta
\end{matrix}
\right),\qquad\text{with }f^-\in\ker P_L^+,\ f^+\in\ker(1-P_L^+).
\]
It follows that $\langle\calB f^+,f^+\rangle_{L^2(E|_\Sigma)}\ge0$, $\langle\calB f^-,f^-\rangle_{L^2(E|_\Sigma)}\le0$. Hence
\[
\begin{aligned}
\langle\widetilde{\calB}s,s\rangle_{L^2(E|_{\pM^{\rm cut}})}&=\big\langle\calB(f^+\sin\theta+f^-\cos\theta),f^+\sin\theta+f^-\cos\theta\big\rangle_{L^2(E|_\Sigma)} \\
&\ \ \ -\big\langle\calB(f^-\sin\theta+f^+\cos\theta),f^-\sin\theta+f^+\cos\theta\big\rangle_{L^2(E|_\Sigma)} \\
&=(\sin^2\theta-\cos^2\theta)\big[\langle\calB f^+,f^+\rangle_{L^2(E|_\Sigma)}-\langle\calB f^-,f^-\rangle_{L^2(E|_\Sigma)}\big]\le0.
\end{aligned}
\]

For the other terms in \eqref{E:calDs square}, apply the Green's formula again,
\[
\begin{aligned}
&\langle\nabla^*\nabla s,s\rangle_{L^2(E_{M^{\rm cut}})}-\|\nabla s\|^2_{L^2(E_{M^{\rm cut}})} \\
&=-\langle\sigma_{\nabla^*}(\nu)\nabla s,s\rangle_{L^2(E|_{\pM^{\rm cut}})}
=\langle\nabla s,\sigma_\nabla(\nu)s\rangle_{L^2(E|_{\pM^{\rm cut}})} \\
&=\langle\nabla s,\nu\otimes s\rangle_{L^2(E|_{\pM^{\rm cut}})}
=\langle\nabla_\nu s,s\rangle_{L^2(E|_{\pM^{\rm cut}})} \\
&=\langle\p_us,s\rangle_{L^2(E|_{\pM^{\rm cut}})}.
\end{aligned}
\]
So
\[
\|\calD s\|_{L^2(E_{M^{\rm cut}})}^2\ge a\|s\|^2_{L^2(E_{M^{\rm cut}})}+\|\nabla s\|^2_{L^2(E_{M^{\rm cut}})}\ge a\|s\|^2_{L^2(E_{M^{\rm cut}})}.
\]
Therefore, $\calD_{j,\theta}$ is invertible and $\ker\calD_{j,\theta}=\{0\}$.
\end{proof}

\begin{proof}[Proof of Theorem~\ref{T:gluing}]
By Lemma~\ref{L:invertibility}, $\spf(\calD_{j,\theta})$ vanishes over each subinterval of $[0,\pi/4]$. Theorem~\ref{T:gluing} is now an immediate consequence of \eqref{E:rel-eta-spf} and \eqref{E:rel-eta-spf 2}.
\end{proof}

\section{The spectral flow on non-compact manifolds}\label{S:sp flow}

In this section, we use relative eta invariants to derive a formula calculating the spectral flow of a family of Dirac--Schr\"odinger operators on non-compact manifolds without boundary. This generalizes a result of Getzler \cite{Getzler93} and can be used to study positive scalar curvature problems.

\subsection{Variation of the truncated relative eta invariants}\label{SS:var trun rel-eta}

Let $M$ be a non-compact complete Riemannian $n$-manifold without boundary endowed with a Dirac bundle $E$, and $\calD_j$ ($j=0,1$) be two Dirac--Schr\"odinger operators on $(M,E)$ which coincide at infinity. Assume $M$ has bounded sectional curvature. For $\varepsilon>0$, consider
\[
\begin{aligned}
\eta^\varepsilon(s;\calD_1,\calD_0)& :=\frac{1}{\Gamma((s+1)/2)}\int_0^\varepsilon t^{(s-1)/2}\Tr\big(\calD_1e^{-t\calD_1^2}-\calD_0e^{-t\calD_0^2}\big)dt, \\
\eta_\varepsilon(s;\calD_1,\calD_0)& :=\frac{1}{\Gamma((s+1)/2)}\int_\varepsilon^\infty t^{(s-1)/2}\Tr\big(\calD_1e^{-t\calD_1^2}-\calD_0e^{-t\calD_0^2}\big)dt.
\end{aligned}
\]
Then by \cite[Section~4]{Shi22}, the first integral is absolutely convergent, is holomorphic for $\Re(s)>n$, and admits a meromorphic continuation to the whole complex plane which is regular at $s=0$; while the second integral is absolutely convergent for $s$ in the whole complex plane. Thus the \emph{truncated} relative eta invariants
\[
\begin{aligned}
\eta^\varepsilon(\calD_1,\calD_0)& :=\eta^\varepsilon(0;\calD_1,\calD_0), \\
\eta_\varepsilon(\calD_1,\calD_0)& :=\eta_\varepsilon(0;\calD_1,\calD_0) \\
& \;=\frac{1}{\pi^{1/2}}\int_\varepsilon^\infty t^{-1/2}\Tr\big(\calD_1e^{-t\calD_1^2}-\calD_0e^{-t\calD_0^2}\big)dt
\end{aligned}
\]
are well-defined.

Let $\calD_{1,r}$, $r\in[0,1]$ be a smooth family of Dirac--Schr\"odinger operators on $(M,E)$ which coincide with $\calD_1$ at infinity. As in \cite[Section~5.2]{Shi22}, we have the following variation formula for $\eta^\varepsilon(\calD_{1,r},\calD_0)$.

\begin{lemma}\label{L:var trun rel-eta}
Suppose there exists the following asymptotic expansion
\begin{equation}\label{E:STexpan-calD}
\Tr\big(\dot{\calD}_{1,r}e^{-t\calD_{1,r}^2}\big)\,\sim\,\sum_{k=0}^\infty c_k(r)t^{(k-n-1)/2},\qquad\text{as }t\to0.
\end{equation}
Then
\[
\frac{d}{dr}\eta^\varepsilon(\calD_{1,r},\calD_0)=2\Big(\frac{\varepsilon}{\pi}\Big)^{1/2}\Tr(\dot{\calD}_{1,r}e^{-\varepsilon\calD_{1,r}^2})-\frac{2}{\pi^{1/2}}c_n(r),
\]
where $\dot{\calD}_{1,r}=\frac{d}{dr}\calD_{1,r}$.
\end{lemma}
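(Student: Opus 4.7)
The plan is to differentiate under the integral sign in the definition of $\eta^\varepsilon(s;\calD_{1,r},\calD_0)$ (for $\Re(s)$ large), apply a standard Duhamel/cyclicity identity to rewrite the integrand as $f+2t\p_t f$ with $f(t,r):=\Tr(\dot\calD_{1,r}e^{-t\calD_{1,r}^2})$, integrate by parts in $t$, and finally extract the value at $s=0$ using the small-$t$ asymptotic expansion \eqref{E:STexpan-calD}.

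First I would establish the operator identity
\[
\frac{d}{dr}\Tr\bigl(\calD_{1,r}e^{-t\calD_{1,r}^2}\bigr)\;=\;\Tr\bigl(\dot\calD_{1,r}e^{-t\calD_{1,r}^2}\bigr)\;+\;2t\,\frac{d}{dt}\Tr\bigl(\dot\calD_{1,r}e^{-t\calD_{1,r}^2}\bigr).
\]
This follows by writing $\frac{d}{dr}e^{-t\calD_{1,r}^2}=-\int_0^t e^{-(t-u)\calD_{1,r}^2}(\dot\calD_{1,r}\calD_{1,r}+\calD_{1,r}\dot\calD_{1,r})e^{-u\calD_{1,r}^2}\,du$ via Duhamel, inserting this into $\frac{d}{dr}\Tr(\calD_{1,r}e^{-t\calD_{1,r}^2})$, using that $\calD_{1,r}$ commutes with its heat semigroup, and applying cyclicity of the trace; both inner integrals collapse to $-2t\Tr(\dot\calD_{1,r}\calD_{1,r}^2e^{-t\calD_{1,r}^2})$, which equals $2t\,\frac{d}{dt}\Tr(\dot\calD_{1,r}e^{-t\calD_{1,r}^2})$ since $\calD_{1,r}^2 e^{-t\calD_{1,r}^2}=-\p_t e^{-t\calD_{1,r}^2}$.

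Plugging this identity into $\frac{d}{dr}\eta^\varepsilon(s;\calD_{1,r},\calD_0)$ for $\Re(s)>n$ gives
\[
\frac{d}{dr}\eta^\varepsilon(s;\calD_{1,r},\calD_0)\;=\;\frac{1}{\Gamma((s{+}1)/2)}\int_0^\varepsilon t^{(s-1)/2}\bigl[f(t,r)+2t\,\p_t f(t,r)\bigr]dt.
\]
I would then integrate the $2t\,\p_t f$ piece by parts with $u=2t^{(s+1)/2}$, $dv=\p_t f\,dt$. For $\Re(s)>n$, the asymptotic \eqref{E:STexpan-calD} forces $t^{(s+1)/2}f(t,r)\to 0$ as $t\to 0$, so only the upper endpoint contributes, and the identity becomes
\[
\frac{d}{dr}\eta^\varepsilon(s;\calD_{1,r},\calD_0)\;=\;\frac{1}{\Gamma((s{+}1)/2)}\Bigl[2\varepsilon^{(s+1)/2}f(\varepsilon,r)\;-\;s\int_0^\varepsilon t^{(s-1)/2}f(t,r)\,dt\Bigr].
\]
Since both sides are meromorphic in $s$ by the same argument used to continue $\eta^\varepsilon(s;\calD_1,\calD_0)$ in \cite{Shi22}, this identity persists, and I would set $s=0$, noting $\Gamma(1/2)=\sqrt{\pi}$.

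The final step evaluates $\lim_{s\to 0}s\int_0^\varepsilon t^{(s-1)/2}f(t,r)\,dt$. Writing $f=\sum_{k=0}^{N} c_k(r)t^{(k-n-1)/2}+R_N$ with $R_N=O(t^{(N-n)/2})$, each term $s\cdot c_k(r)\frac{2\varepsilon^{(s+k-n)/2}}{s+k-n}$ tends to $0$ as $s\to 0$ except when $k=n$, where it tends to $2c_n(r)$; the remainder contributes $0$ for $N$ large. This yields the asserted formula. The main obstacle is the bookkeeping for the Duhamel/cyclicity identity (making sure the boundary term from integration by parts vanishes uniformly, and that one is allowed to interchange $\frac{d}{dr}$ with the $t$-integral and trace), but both justifications reduce to standard heat-kernel trace-class estimates already exploited in Proposition~\ref{P:rel-eta} and \cite[\S 5.2]{Shi22}.
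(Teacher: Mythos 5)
Your proposal is correct and follows essentially the same route as the paper: the variational identity $\frac{d}{dr}\Tr(\calD_{1,r}e^{-t\calD_{1,r}^2})=(1+2t\,\p_t)\Tr(\dot\calD_{1,r}e^{-t\calD_{1,r}^2})$, integration by parts in $t$ for $\Re(s)>n$, meromorphic continuation, and extraction of the residue $2c_n(r)$ at $s=0$. The paper states the key identity and the residue claim without the Duhamel and term-by-term details you supply, but the argument is the same.
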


\begin{proof}
Note that
\[
\frac{\p}{\p r}\Tr\big(\calD_{1,r}e^{-t\calD_{1,r}^2}-\calD_0e^{-t\calD_0^2}\big)=\Big(1+2t\frac{\p}{\p t}\Big)\Tr(\dot{\calD}_{1,r}e^{-t\calD_{1,r}^2}).
\]
Here $\dot{\calD}_{1,r}e^{-t\calD_{1,r}^2}$ is a trace-class operator by \cite[Lemma~5.6]{Shi22}, as $\dot{\calD}_{1,r}$ vanishes at infinity. So for $\Re(s)>n$, by \eqref{E:STexpan-calD},
\[
\begin{aligned}
&\frac{d}{dr}\eta^\varepsilon(s;\calD_{1,r},\calD_0) \\
&\quad=\frac{1}{\Gamma((s+1)/2)}\left(2\varepsilon^{(s+1)/2}\Tr(\dot{\calD}_{1,r}e^{-\varepsilon\calD_{1,r}^2})-s\int_0^\varepsilon t^{(s+1)/2}\Tr(\dot{\calD}_{1,r}e^{-t\calD_{1,r}^2})\right).
\end{aligned}
\]
Again by \eqref{E:STexpan-calD}, the integral in the right-hand side admits a meromorphic continuation to the complex plane such that $s=0$ is a simple pole with residue $2c_n(r)$. The lemma then follows.
\end{proof}

\subsection{The spectral flow formula}\label{SS:sp-flow formula}

We now consider $\calD_{1,r}$, $r\in[0,1]$ to be a family of Dirac--Schr\"odinger operators connecting $\calD_0$ and $\calD_1$, i.e., $\calD_{1,0}=\calD_0,$ $\calD_{1,1}=\calD_1$, and each $\calD_r$ coincides with $\calD_0$ at infinity. To simplify notations, we will denote $\calD_{1,r}$ by $\calD_r$ in the following. By the reason as in Subsection~\ref{SS:pf-gluing}, the spectral flow $\spf(\calD_r)_{[0,1]}$ is well-defined. In \cite[Proposition~5.10]{Shi22} (see Lemma~\ref{L:rel-eta-spf}), a formula calculating the spectral flow using relative eta invariant has already been obtained. Now we derive another formula in terms of truncated relative eta invariant, which has the form of Getzler \cite[Theorem~2.6]{Getzler93}.

\begin{proposition}\label{P:sp-flow trun rel-eta}
Let $M$ be a non-compact complete Riemannian manifold with bounded sectional curvature, and $\calD_0$ and $\calD_1$ be two Dirac--Schr\"odinger operators on $M$ which coincide at infinity. Suppose $\calD_r$, $r\in[0,1]$ is a smooth family of Dirac--Schr\"odinger operators connecting $\calD_0$ and $\calD_1$ such that each $\calD_r$ coincides with $\calD_0$ at infinity. Then for $\varepsilon>0$,
\begin{equation}\label{E:sp-flow trun rel-eta}
\begin{aligned}
\spf(\calD_r)_{[0,1]}& =\Big(\frac{\varepsilon}{\pi}\Big)^{1/2}\int_0^1\Tr(\dot{\calD}_re^{-\varepsilon\calD_r^2})dr \\
& \quad+\frac{1}{2}\big(\eta_\varepsilon(\calD_1,\calD_0)+\dim\ker\calD_1-\dim\ker\calD_0\big).
\end{aligned}
\end{equation}
In particular, when $\dim\ker\calD_0=\dim\ker\calD_1$, we have
\[
\spf(\calD_r)_{[0,1]}=\Big(\frac{\varepsilon}{\pi}\Big)^{1/2}\int_0^1\Tr(\dot{\calD}_re^{-\varepsilon\calD_r^2})dr+\frac{1}{2}\eta_\varepsilon(\calD_1,\calD_0).
\]
\end{proposition}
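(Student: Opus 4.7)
The plan is to combine Lemma~\ref{L:var trun rel-eta} with the spectral flow formula \cite[Proposition~5.10]{Shi22} (whose boundary-condition analogue in this paper is Lemma~\ref{L:rel-eta-spf}). Integrating the variational identity of Lemma~\ref{L:var trun rel-eta} in $r$ over $[0,1]$, and using $\eta^\varepsilon(\calD_0,\calD_0)=0$ (immediate from the defining integral), I obtain
\[
\frac{1}{2}\eta^\varepsilon(\calD_1,\calD_0)=\Big(\frac{\varepsilon}{\pi}\Big)^{1/2}\int_0^1 \Tr(\dot\calD_r e^{-\varepsilon\calD_r^2})\,dr-\frac{1}{\pi^{1/2}}\int_0^1 c_n(r)\,dr.
\]
On the other hand, specializing the spectral-flow formula to the path $\calD_r$ with constant reference $\calD_0$ (so that $\xi(\calD_0,\calD_0)=0$ and the spectral flow of the constant family vanishes) gives
\[
\spf(\calD_r)_{[0,1]}=\xi(\calD_1,\calD_0)-\frac{1}{2}\int_0^1 \frac{d}{dr}\bar\eta(\calD_r,\calD_0)\,dr.
\]

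The bridge between the two identities is the pointwise equality $\frac{d}{dr}\bar\eta(\calD_r,\calD_0)=-\frac{2}{\pi^{1/2}} c_n(r)$, which I would establish by sending $\varepsilon\to\infty$ in Lemma~\ref{L:var trun rel-eta}: the left-hand side converges (mod $\ZZ$) to $\frac{d}{dr}\bar\eta(\calD_r,\calD_0)$ since $\eta^\varepsilon\to\eta$, while the heat-trace term $2(\varepsilon/\pi)^{1/2}\Tr(\dot\calD_r e^{-\varepsilon\calD_r^2})$ decays to $0$ because $\dot\calD_r$ is supported in a fixed compact set (the $\calD_r$ share an essential support) and $\calD_r^2$ has a positive spectral gap at $0$ by the Dirac--Schr\"odinger condition. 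Once this identity is in place, the integrals of $c_n(r)$ in the two displayed formulas cancel, giving
\[
\spf(\calD_r)_{[0,1]}=\xi(\calD_1,\calD_0)-\frac{1}{2}\eta^\varepsilon(\calD_1,\calD_0)+\Big(\frac{\varepsilon}{\pi}\Big)^{1/2}\int_0^1 \Tr(\dot\calD_r e^{-\varepsilon\calD_r^2})\,dr.
\]
Using $\eta=\eta^\varepsilon+\eta_\varepsilon$ and the definition of $\xi$, the combination $\xi(\calD_1,\calD_0)-\tfrac12\eta^\varepsilon(\calD_1,\calD_0)$ simplifies to $\tfrac12\big(\eta_\varepsilon(\calD_1,\calD_0)+\dim\ker\calD_1-\dim\ker\calD_0\big)$, which is precisely \eqref{E:sp-flow trun rel-eta}. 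The ``in particular'' case then follows trivially.

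The main obstacle is to rigorously justify the large-$\varepsilon$ decay of $(\varepsilon/\pi)^{1/2}\Tr(\dot\calD_r e^{-\varepsilon\calD_r^2})$ on a non-compact manifold, where $\calD_r$ may have continuous spectrum (bounded away from $0$). Since $\dot\calD_r$ is a compactly-supported bundle endomorphism, only the heat kernel of $\calD_r^2$ on a fixed compact region contributes; the uniform positivity of $\calD_r^2$ outside this set, together with finite-propagation-speed or parametrix estimates, should yield the required exponential decay in $\varepsilon$ at all $r$ for which $\ker\calD_r=0$. At the (generically isolated) values of $r$ where $\calD_r$ has non-trivial kernel, the identity $\frac{d}{dr}\bar\eta(\calD_r,\calD_0)=-\frac{2}{\pi^{1/2}} c_n(r)$ extends by continuity of both sides as smooth functions of $r$.
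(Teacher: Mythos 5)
Your argument is correct and is essentially the paper's proof: both integrate Lemma~\ref{L:var trun rel-eta} over $r\in[0,1]$ to express $\eta^\varepsilon(\calD_1,\calD_0)$, combine this with the identity $\spf(\calD_r)_{[0,1]}=\xi(\calD_1,\calD_0)+\pi^{-1/2}\int_0^1 c_n(r)\,dr$, and finish with the same algebra $\xi(\calD_1,\calD_0)-\tfrac12\eta^\varepsilon(\calD_1,\calD_0)=\tfrac12\big(\eta_\varepsilon(\calD_1,\calD_0)+\dim\ker\calD_1-\dim\ker\calD_0\big)$. The only deviation is that the paper obtains the displayed spectral-flow identity by citing \cite[Theorem~5.8, Proposition~5.10]{Shi22} wholesale, whereas you re-derive the ingredient $\frac{d}{dr}\oeta(\calD_r,\calD_0)=-\frac{2}{\pi^{1/2}}c_n(r)$ via an $\varepsilon\to\infty$ limit; that sketch is your one soft spot (it interchanges $\frac{d}{dr}$ with the limit, the term $(\varepsilon/\pi)^{1/2}\Tr(\dot{\calD}_re^{-\varepsilon\calD_r^2})$ need not decay at parameters where $\ker\calD_r\neq0$, and the invertible locus need not be dense), but it is dispensable since the identity is precisely \cite[Theorem~5.8]{Shi22}.
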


\begin{proof}
Since $\dot{\calD_r}$ is compactly supported, the asymptotic expansion \eqref{E:STexpan-calD} exists. Hence by Lemma~\ref{L:var trun rel-eta},
\[
\begin{aligned}
\eta^\varepsilon(\calD_1,\calD_0)&=\eta^\varepsilon(\calD_0,\calD_0)+\int_0^1\left(\frac{d}{dr}\eta^\varepsilon(\calD_r,\calD_0)\right)dr \\
&=2\int_0^1\left(\Big(\frac{\varepsilon}{\pi}\Big)^{1/2}\Tr(\dot{\calD}_{1,r}e^{-\varepsilon\calD_{1,r}^2})-\frac{1}{\pi^{1/2}}c_n(r)\right)dr.
\end{aligned}
\]
On the other hand, by \cite[Theorem~5.8, Proposition~5.10]{Shi22},
\[
\spf(\calD_r)_{[0,1]}=\frac{1}{2}\big(\eta(\calD_1,\calD_0)+\dim\ker\calD_1-\dim\ker\calD_0\big)+\int_0^1\frac{1}{\pi^{1/2}}c_n(r)dr.
\]
Combining the above equations yields \eqref{E:sp-flow trun rel-eta}.
\end{proof}

\subsection{Chern--Simons forms and the spectral flow}\label{SS:Chern-Simons sp-flow }

In this subsection, we focus on spin manifolds, and the above result will transform to a geometric formula.

As in Example~\ref{Eg:Dirac-Schro-1}, let $(M,g)$ be an odd-dimensional Riemannian spin manifold without boundary admitting a complete metric of uniformly PSC outside a compact subset, and $\slaS\to M$ be the spinor bundle. Suppose $F\to M$ is a Hermitian vector bundle with two flat connections $\nabla_0^F$ and $\nabla_1^F$ which coincide at infinity. For $r\in[0,1]$, put $\nabla_r^F=(1-r)\nabla_0^F+r\nabla_1^F$, which induces a family of connections
\[
\nabla^E_r=\nabla^\slaS\otimes1+1\otimes\nabla_r^F,\quad 0\le r\le1
\]
on the twisted spinor bundle $E=\slaS\otimes F$. So we obtain a family of Dirac--Schr\"odinger operators $\slaD_{E,r}$, $r\in[0,1]$.

Recall the Chern character form associated to a connection $\nabla$ of a vector bundle is the even-degree differential form defined by
\[
\ch(\nabla):=\tr\left(\exp\Big(\frac{\sqrt{-1}}{2\pi}\nabla^2\Big)\right).
\]
For two connections $\nabla_0,\nabla_1$ on a vector bundle, their \emph{Chern--Simons transgressed form} associated to the Chern character is the odd-degree differential form
\[
\Tch(\nabla_0,\nabla_1)=-\int_0^1\tr\left(\frac{\sqrt{-1}}{2\pi}\dot{\nabla}_r\exp\Big(\frac{\sqrt{-1}}{2\pi}\nabla_r^2\Big)\right)dr,
\]
where $\nabla_r=(1-r)\nabla_0+r\nabla_1$ and $\dot{\nabla}_r=\nabla_1-\nabla_0$. It satisfies the transgression formula (cf. Zhang \cite[Chapter~1]{Zhang01book})
\[
\ch(\nabla_0)-\ch(\nabla_1)=d\Tch(\nabla_0,\nabla_1).
\]
If $\nabla_0$ and $\nabla_1$ are both flat connections, then $\Tch(\nabla_0,\nabla_1)$ is a closed form. Let $\omega=\nabla_1-\nabla_0$. Then it can be derived like in Getzler \cite[Section~1]{Getzler93} that
\begin{equation}\label{E:Chern-Simons}
\Tch(\nabla_0,\nabla_1)=\sum_{k=0}^\infty\Big(\frac{1}{2\pi\sqrt{-1}}\Big)^{k+1}\frac{k!}{(2k+1)!}\tr(\omega^{2k+1}).
\end{equation}

With the above notations, the spectral flow of the path $\slaD_{E,r}$, $r\in[0,1]$ can be computed as the following.

\begin{theorem}\label{T:sp flow-trans}
Suppose $(M,g)$ is a $(2m+1)$-dimensional non-compact complete Riemannian spin manifold of bounded sectional curvature such that the scalar curvature is uniformly positive at infinity. Let $\nabla_0^F$ and $\nabla_1^F$ be two flat connections on a Hermitian vector bundle $F$ over $M$ which coincide at infinity, and $\nabla_r^F$, $r\in[0,1]$ be the linear path between them. Let $\slaD_{E,r}$ be the associated family of twisted spin Dirac operators on $E=\slaS\otimes F$ as above. Then
\begin{equation}\label{E:sp flow-trans}
\begin{aligned}
\spf(\slaD_{E,r})_{[0,1]}& =\int_M\hat{A}(M,g)\Tch(\nabla_0^F,\nabla_1^F) \\
& \quad+\frac{1}{2}\big(\eta(\slaD_{E,1},\slaD_{E,0})+\dim\ker\slaD_{E,1}-\dim\ker\slaD_{E,0}\big),
\end{aligned}
\end{equation}
where $\Tch(\nabla_0^F,\nabla_1^F)$ is the Chern--Simons form given in \eqref{E:Chern-Simons}, and
\begin{equation}\label{E:A-hat form}
\hat{A}(M,g):=\det{}^{1/2}\left(\frac{\frac{\sqrt{-1}}{4\pi}R^{TM}}{\sinh(\frac{\sqrt{-1}}{4\pi}R^{TM})}\right)
\end{equation}
is the $\hat{A}$-genus form of $(M,g)$ (with $R^{TM}$ denoting the Riemannian curvature associated to the Levi--Civita connection of $g$).
\end{theorem}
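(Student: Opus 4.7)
The plan is to pass to the $\varepsilon\to 0^+$ limit in the truncated formula \eqref{E:sp-flow trun rel-eta} of Proposition~\ref{P:sp-flow trun rel-eta}. First verify the hypotheses. By the twisted Lichnerowicz identity \eqref{E:Lich for-1}, each $\slaD_{E,r}$ is a Dirac--Schr\"odinger operator on $(M,E)$: outside a compact set the connections $\nabla_0^F$ and $\nabla_1^F$ coincide and are flat, so $\calR^F$ vanishes there and $\kappa/4$ is uniformly positive by hypothesis. Moreover $\dot{\slaD}_{E,r}=\rmc(\omega)$ with $\omega:=\nabla_1^F-\nabla_0^F$ is a compactly supported bundle endomorphism, so $\Tr(\dot{\slaD}_{E,r}e^{-\varepsilon\slaD_{E,r}^2})$ is well defined and admits an asymptotic expansion as in \eqref{E:STexpan-calD}, whence Proposition~\ref{P:sp-flow trun rel-eta} applies for every $\varepsilon>0$.

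As $\varepsilon\to 0^+$, the truncated relative eta invariant $\eta_\varepsilon(\slaD_{E,1},\slaD_{E,0})$ converges to $\eta(\slaD_{E,1},\slaD_{E,0})$: indeed $\eta=\eta^\varepsilon+\eta_\varepsilon$ at $s=0$, and the small-time piece $\eta^\varepsilon(\slaD_{E,1},\slaD_{E,0})$ tends to $0$ because its integrand is controlled by the regular heat expansion near $t=0$. Therefore it remains to compute
\[
L:=\lim_{\varepsilon\to 0^+}\Big(\frac{\varepsilon}{\pi}\Big)^{1/2}\int_0^1\Tr\big(\dot{\slaD}_{E,r}e^{-\varepsilon\slaD_{E,r}^2}\big)\,dr
\]
and to identify it with $\int_M\hat{A}(M,g)\Tch(\nabla_0^F,\nabla_1^F)$.

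This is the main obstacle and will be handled by a local-index-theorem argument. Since $\omega$ is compactly supported, by finite propagation speed and the Gaussian off-diagonal heat kernel estimates the contribution of $\Tr(\rmc(\omega)e^{-\varepsilon\slaD_{E,r}^2})$ away from $\supp\omega$ is exponentially small in $\varepsilon$, so the problem localizes to a compact region where Getzler rescaling (cf.~\cite{Getzler93} and \cite{BeGeVe}) applies to the twisted spin Dirac heat kernel on the diagonal. The rescaling yields the pointwise identity
\[
\lim_{\varepsilon\to 0^+}\Big(\frac{\varepsilon}{\pi}\Big)^{1/2}\Tr\big(\dot{\slaD}_{E,r}e^{-\varepsilon\slaD_{E,r}^2}\big)=-\int_M\hat{A}(M,g)\wedge\tr\Big(\frac{\sqrt{-1}}{2\pi}\dot{\nabla}_r^F\exp\Big(\frac{\sqrt{-1}}{2\pi}(\nabla_r^F)^2\Big)\Big),
\]
with the factor $(\varepsilon/\pi)^{1/2}$ absorbing the odd-dimensional scaling mismatch. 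Uniform bounds in $r$ on the Getzler expansion over $\supp\omega$ justify interchanging the $r$-integral with the $\varepsilon$-limit, and the definition of the Chern--Simons transgression form then identifies $L$ with $\int_M\hat{A}(M,g)\Tch(\nabla_0^F,\nabla_1^F)$. Substituting this together with the limiting eta term into \eqref{E:sp-flow trun rel-eta} produces \eqref{E:sp flow-trans}. The delicate point throughout is the uniform control of the Getzler expansion in the parameter $r$, but this is routine once the computation is reduced to the compact set $\supp\omega$.
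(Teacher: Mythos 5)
Your proposal follows essentially the same route as the paper: apply the truncated spectral-flow formula of Proposition~\ref{P:sp-flow trun rel-eta}, let $\varepsilon\to0^+$ so that $\eta_\varepsilon$ passes to $\eta(\slaD_{E,1},\slaD_{E,0})$, and identify the limit of the heat term with $\int_M\hat{A}(M,g)\Tch(\nabla_0^F,\nabla_1^F)$ by localizing to a neighbourhood of $\supp(\nabla_1^F-\nabla_0^F)$ and invoking Getzler's local index computation. The only (cosmetic) difference is the localization device — you use finite propagation speed and off-diagonal heat kernel decay, whereas the paper cuts $M$ and passes to the closed double of the compact piece so as to quote the closed-manifold result of \cite{Getzler93} verbatim; both are standard and yield the same integrand on the common support.
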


\begin{remark}\label{R:sp flow-spin}
By the hypothesis that $\nabla_0^F$ and $\nabla_1^F$ coincide at infinity, one has that $\Tch(\nabla_0^F,\nabla_1^F)$ is compactly supported. Thus the integral in \eqref{E:sp flow-trans} is well-defined.
\end{remark}

\begin{remark}\label{R:sp flow-spin-1}
Formula \eqref{E:sp flow-trans} implies the following mod $\ZZ$ formula for the reduced relative eta invariant (compare Gilkey \cite[Theorem~3.11.6]{Gilkey95book})
\[
\xi(\slaD_{E,1},\slaD_{E,0})=-\int_M\hat{A}(M,g)\Tch(\nabla_0^F,\nabla_1^F)\quad\mbox{mod }\ZZ.
\]
\end{remark}

\begin{proof}[Proof of Theorem~\ref{T:sp flow-trans}]
In view of Proposition~\ref{P:sp-flow trun rel-eta}, it suffices to prove that
\[
\lim_{\varepsilon\to0}\Big(\frac{\varepsilon}{\pi}\Big)^{1/2}\int_0^1\Tr(\dot{\slaD}_{E,r}e^{-\varepsilon\slaD_{E,r}^2})dr=\int_M\hat{A}(M,g)\Tch(\nabla_0^F,\nabla_1^F).
\]

Cut $M$  along a compact hypersurface such that $M$ is divided into two parts $M'$ and $M''$, where $M'$ is a compact manifold with boundary containing the support of $\nabla_1^F-\nabla_0^F$. Set $\widehat{M}$ to be the closed double of $M'$. Then $g$ and $\slaD_{E,r}$ can be extended to $\widehat{M}$, which are denoted by $\hat{g}$ and $\widehat{\slaD}_{E,r}$, respectively.
By \cite[Section~3]{Shi22}, when $\varepsilon\to0$, one can replace $\Tr(\dot{\slaD}_{E,r}e^{-\varepsilon\slaD_{E,r}^2})$ by $\Tr(\dot{\widehat{\slaD}}_{E,r}e^{-\varepsilon\widehat{\slaD}_{E,r}^2})$. Since the latter is a heat trace on a closed manifold, by the local index computation (cf. Getzler \cite[pp. 499--500]{Getzler93}), we get\footnote{Note that our convention in the definition of $\hat{A}$-genus form and Chern--Simons form includes the factor $\frac{\sqrt{-1}}{2\pi}$, which is different from that in \cite{Getzler93}.}
\[
\begin{aligned}
&\lim_{\varepsilon\to0}\Big(\frac{\varepsilon}{\pi}\Big)^{1/2}\int_0^1\Tr(\dot{\widehat{\slaD}}_{E,r}e^{-\varepsilon\widehat{\slaD}_{E,r}^2})dr \\
&=\frac{(-\sqrt{-1})^{2m+1}(2\sqrt{-1})^m}{\pi^{1/2}(4\pi)^{m+1/2}}\int_0^1\int_{\widehat{M}}\det{}^{1/2}\left(\frac{R^{T\widehat{M}}/2}{\sinh R^{T\widehat{M}}/2}\right)\wedge\tr\big(\widehat{\omega}^F\exp(-(\widehat{\nabla}_r^F)^2)\big)dr \\
&=\frac{1}{(2\pi\sqrt{-1})^{m+1}}\int_{\widehat{M}}\det{}^{1/2}\left(\frac{R^{T\widehat{M}}/2}{\sinh R^{T\widehat{M}}/2}\right)\wedge\int_0^1\tr\big(\widehat{\omega}^F\exp(-(\widehat{\nabla}_r^F)^2)\big)dr \\
&=\int_{\widehat{M}}\hat{A}(\widehat{M},\hat{g})\Tch(\widehat{\nabla}_0^F,\widehat{\nabla}_1^F),
\end{aligned}
\]
where $\widehat{\nabla}_j^F$ ($j=0,1$) is the extension of $\nabla_j^F$ to $\widehat{M}$ such that $\widehat{\omega}^F=\widehat{\nabla}_1^F-\widehat{\nabla}_0^F$ vanishes outside $M'$. It is clear that $\hat{A}(M,g)\Tch(\nabla_0^F,\nabla_1^F)$ and $\hat{A}(\widehat{M},\hat{g})\Tch(\widehat{\nabla}_0^F,\widehat{\nabla}_1^F)$ are equal on their common support. Therefore, \eqref{E:sp flow-trans} is proved.
\end{proof}

The above setting has an important special case as follows. Let $N$ be a closed manifold and $u:N\to U_l(\CC)$ be a smooth map with values in a unitary group for some integer $l>0$. Then $u$ induces a family of connections
\[\label{E:conn family}
\nabla_r:=d+ru^{-1}(du),\quad 0\le r\le1
\]
on the trivial bundle $N\times\CC^l$. In this setting, the Chern--Simons form \eqref{E:Chern-Simons} is just the \emph{odd Chern character form} (cf. Zhang \cite[Chapter~1]{Zhang01book})
\[\label{E:odd chern ch}
\ch(u):=\sum_{k=0}^\infty\Big(\frac{1}{2\pi\sqrt{-1}}\Big)^{k+1}\frac{k!}{(2k+1)!}\tr\big((u^{-1}(du))^{2k+1}\big).
\]

Suppose $N$ is of the same dimension as $M$. Let $f:M\to N$ be a smooth map of non-zero degree which is constant outside a compact subset. On the pull-back bundle $F:=f^*(N\times\CC^l)$ over $M$, one has a family of connections $\nabla^F_r=f^*(\nabla_r)$ for $r\in[0,1]$. Like before, this induces a smooth family of twisted spin Dirac operators $\slaD_{E,r}$, $r\in[0,1]$ on the twisted bundle $E=\slaS\otimes F$.

Note that $\slaD_{E,1}=(u\circ f)^{-1}\slaD_{E,0}(u\circ f)$. Thus $\slaD_{E,0}$ and $\slaD_{E,1}$ are conjugated by a fiberwise unitary operator and they coincide at infinity, which means
\[
\eta(\slaD_{E,1},\slaD_{E,0})=0=\dim\ker\slaD_{E,1}-\dim\ker\slaD_{E,0}.
\]
Therefore we obtain the following consequence of Theorem~\ref{T:sp flow-trans}.

\begin{corollary}\label{C:sp flow-pull back}
Let $\slaD_{E,r}$, $r\in[0,1]$ be the family of operators on $(M,g,E)$ defined as above. Then
\[
\spf(\slaD_{E,r})_{[0,1]}=\int_M\hat{A}(M,g)f^*\ch(u).
\]
\end{corollary}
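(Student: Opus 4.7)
The plan is to reduce this to a direct application of Theorem~\ref{T:sp flow-trans}, with both the reduced relative eta term and the Chern--Simons integrand collapsing to known pieces.

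First I would verify that the hypotheses of Theorem~\ref{T:sp flow-trans} apply to $\nabla_0^F = d$ and $\nabla_1^F = f^{*}(d + u^{-1}du)$. The connection $d + u^{-1}du$ on $N\times\CC^l$ is gauge-equivalent to $d$ via the bundle automorphism $s \mapsto u^{-1}s$, hence flat, and pulling back by $f$ preserves flatness. Since $f$ is constant outside a compact subset of $M$, the pulled-back one-form $f^{*}(u^{-1}du)$ is compactly supported, so $\nabla_0^F$ and $\nabla_1^F$ agree at infinity. The Lichnerowicz identity \eqref{E:Lich for-1} combined with $\calR^{F}_{r}\equiv 0$ outside the support of $f^{*}(u^{-1}du)$ confirms that each $\slaD_{E,r}$ is a Dirac--Schr\"odinger operator coinciding with $\slaD_{E,0}$ at infinity. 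Theorem~\ref{T:sp flow-trans} then gives
\[
\spf(\slaD_{E,r})_{[0,1]} \;=\; \int_M \hat{A}(M,g)\,\Tch(\nabla_0^F,\nabla_1^F) \;+\; \xi(\slaD_{E,1},\slaD_{E,0}).
\]

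Next I would show the reduced relative eta term vanishes via the gauge-conjugacy observed in the text. The unitary bundle automorphism of $F$ given by $g := u\circ f : M \to U_l(\CC)$ satisfies $g^{-1}\circ(d + g^{-1}dg)\circ g = d$, and since it is fiberwise unitary it commutes with the Clifford action and the spinor connection on $\slaS$. Therefore $g^{-1}\slaD_{E,1}\,g = \slaD_{E,0}$ as operators on $L^2(E)$, so $\slaD_{E,1}$ and $\slaD_{E,0}$ have identical spectrum with multiplicities. Consequently $\eta(\slaD_{E,1},\slaD_{E,0})=0$ and $\dim\ker\slaD_{E,1} = \dim\ker\slaD_{E,0}$, which together give $\xi(\slaD_{E,1},\slaD_{E,0}) = 0$.

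Finally I would identify $\Tch(\nabla_0^F,\nabla_1^F)$ with $f^{*}\ch(u)$ by directly feeding $\omega^F = f^{*}(u^{-1}du)$ into the explicit series \eqref{E:Chern-Simons}. Because $f^{*}$ commutes with both the wedge product and the fiberwise trace,
\[
\Tch(\nabla_0^F,\nabla_1^F) \;=\; \sum_{k=0}^\infty \Big(\frac{1}{2\pi\sqrt{-1}}\Big)^{k+1}\frac{k!}{(2k+1)!}\, f^{*}\tr\bigl((u^{-1}du)^{2k+1}\bigr) \;=\; f^{*}\ch(u).
\]
Substituting the two computations into the displayed formula yields the corollary. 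There is no serious obstacle here; the work is essentially bookkeeping once Theorem~\ref{T:sp flow-trans} is in hand. The only conceptual point is recognizing that a linear path between two flat connections related by a global unitary gauge transformation has spectrally conjugate endpoints, which forces the $\xi$-contribution to drop out and leaves the topological term in the asserted form.
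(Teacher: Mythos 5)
Your proposal is correct and follows essentially the same route as the paper: apply Theorem~\ref{T:sp flow-trans}, observe that $\slaD_{E,1}=(u\circ f)^{-1}\slaD_{E,0}(u\circ f)$ so that the reduced relative eta contribution vanishes, and identify $\Tch(\nabla_0^F,\nabla_1^F)$ with $f^{*}\ch(u)$ from the explicit series \eqref{E:Chern-Simons}. The only cosmetic slip is the direction of the gauge identity, which should read $(u\circ f)\circ\bigl(d+(u\circ f)^{-1}d(u\circ f)\bigr)\circ(u\circ f)^{-1}=d$; this does not affect the spectral conclusion.
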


\subsection{Area enlargeable manifolds}\label{SS:area enlar}

For a long time, the index theory of Dirac operators has been applied in the study of PSC problems on spin manifolds. Traditionally, it is carried out in even dimensions. And when the dimension is odd, one usually takes product with $S^1$ to convert to even-dimensional case.
In \cite{LiSuWang24}, Li, Su and Wang use the method of spectral flow to give a direct proof of Llarull's theorem \cite{Llarull98} (which is a question asked by Gromov \cite{Gromov23Four}) and its generalization in odd dimensions.
Inspired by it, in this subsection, we will apply the formula for the spectral flow on non-compact manifolds obtained above to provide a new proof of Gromov--Lawson's theorem about area enlargeable manifolds in odd dimensions.

We first recall the notion of area enlargeable manifolds.

\begin{definition}[Gromov--Lawson \cite{GromovLawson83}]\label{D:area enlar}
A connected $n$-manifold $M$ (without boundary) is called \emph{area enlargeable} (or \emph{$\Lambda^2$-enlargeable}) if given any Riemannian metric on $M$ and any  $\varepsilon>0$, there exist a covering manifold $\widetilde{M}\to M$ (with the lifted metric) which is spin, and a smooth map $f:\widetilde{M}\to S^n(1)$ (the standard unit sphere) which is constant at infinity and has non-zero degree such that $f$ is $\varepsilon$\emph{-contracting} on two forms, which means $|f^*\alpha|\le\varepsilon|\alpha|$ for all 2-forms $\alpha$ on $S^n(1)$. Here $|\cdot|$ denotes the pointwise norm induced by the inner product given by the Riemannian metric.
\end{definition}

There is a stronger version of Definition~\ref{D:area enlar} where the 2-forms are replaced by 1-forms. In this case, the manifold is called \emph{enlargeable}. This was first introduced by Gromov--Lawson in \cite{GromovLawson80spin}. Intuitively, the ``largeness'' of an (area) enlargeable manifold obstructs the existence of a PSC metric. The following is one of the results in this regard.

\begin{theorem}[Gromov--Lawson \cite{GromovLawson83}]\label{T:area enlar}
A compact area enlargeable manifold does not admit a metric of positive scalar curvature.
\end{theorem}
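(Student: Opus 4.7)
The plan is to argue by contradiction using Corollary~\ref{C:sp flow-pull back}. Suppose some compact odd-dimensional area enlargeable manifold $M$ admitted a metric $g_M$ with scalar curvature $\kappa \ge \kappa_0 > 0$. For each $\varepsilon > 0$, Definition~\ref{D:area enlar} furnishes a spin covering $\pi : \widetilde M_\varepsilon \to M$ and a map $f_\varepsilon: \widetilde M_\varepsilon \to S^n(1)$ of non-zero degree, constant outside a compact subset and $\varepsilon$-contracting on $2$-forms. The lifted metric $\tilde g = \pi^* g_M$ is uniformly PSC with bounded sectional curvature, so the framework of Section~\ref{S:sp flow} applies to $(\widetilde M_\varepsilon,\tilde g)$.

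I would fix once and for all a smooth unitary $u: S^n(1) \to U_\ell(\mathbb C)$ whose odd Chern character satisfies $\int_{S^n}\ch(u) \neq 0$ (any generator of $K^{-1}(S^n)\otimes\mathbb Q$ will do). Pulling back the linear path of flat connections $\nabla_r = d + r u^{-1}du$ yields a family $\nabla_r^F$ on $F = f_\varepsilon^*(\underline{\mathbb C^\ell})$ that coincides with $d$ outside a compact set, and the associated twisted spin Dirac operators $\slaD_{E,r}$, $r\in[0,1]$, fit Example~\ref{Eg:Dirac-Schro-1}. Corollary~\ref{C:sp flow-pull back} then gives
\[
\spf(\slaD_{E,r})_{[0,1]} \;=\; \int_{\widetilde M_\varepsilon} \hatA(\widetilde M_\varepsilon,\tilde g)\, f_\varepsilon^*\ch(u).
\]

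Next I would argue the left-hand side vanishes for $\varepsilon$ small. The curvature $\nabla_r^2 = r(r-1)(u^{-1}du)^2$ is a fixed $2$-form on $S^n$, uniformly bounded by some $C_u$ depending only on $u$; the area-contraction hypothesis then forces $|f_\varepsilon^*\nabla_r^2| \le C_u\,\varepsilon$ pointwise, and the twisted Lichnerowicz formula \eqref{E:Lich for-1} yields $\slaD_{E,r}^2 \ge \kappa_0/4 - C\varepsilon$ for a constant $C$ independent of $r$. For $\varepsilon < \kappa_0/(4C)$ every $\slaD_{E,r}$ is invertible, so the family has trivial spectral flow.

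The last step, which I expect to be the place demanding real care, is to show the right-hand side of the displayed identity does not vanish. Since $\hatA$ is supported in degrees divisible by $4$, the cross terms have the shape $\int \hatA_{4k}\wedge(f_\varepsilon^*\ch(u))_{n-4k}$ with $k \ge 1$. Because $H^{n-4k}(S^n;\mathbb R) = 0$ in this range, each form $(\ch(u))_{n-4k}$ admits a primitive $\beta$ on $S^n$; since $f_\varepsilon$ is locally constant outside a compact set, the pullback $f_\varepsilon^*\beta$ is compactly supported on $\widetilde M_\varepsilon$, and the Chern--Weil form $\hatA_{4k}$ is closed. Stokes' theorem on $\widetilde M_\varepsilon$ then collapses each cross term,
\[
\int_{\widetilde M_\varepsilon}\hatA_{4k}\wedge f_\varepsilon^*(\ch(u))_{n-4k} \;=\; \int_{\widetilde M_\varepsilon} d\bigl(\hatA_{4k}\wedge f_\varepsilon^*\beta\bigr) \;=\; 0,
\]
and only the $\hatA_0 = 1$ piece survives, giving
\[
\int_{\widetilde M_\varepsilon}\hatA(\widetilde M_\varepsilon,\tilde g)\,f_\varepsilon^*\ch(u) \;=\; \int_{\widetilde M_\varepsilon} f_\varepsilon^*(\ch(u))_n \;=\; \deg(f_\varepsilon)\int_{S^n}\ch(u) \;\neq\; 0,
\]
contradicting the vanishing of the spectral flow. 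The delicate point is ensuring the Stokes argument is valid on the non-compact $\widetilde M_\varepsilon$, which hinges precisely on the compact support of $f_\varepsilon^*\beta$ provided by $f_\varepsilon$ being constant at infinity.
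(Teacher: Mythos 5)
Your argument is correct and is essentially the paper's proof: both use Corollary~\ref{C:sp flow-pull back} to equate the spectral flow with $\deg(f)\int_{S^n}\ch(u)\neq 0$, kill the cross terms by exactness of the lower-degree parts of $\ch(u)$ on the sphere, and derive a contradiction from the invertibility of every $\slaD_{E,r}$ via the twisted Lichnerowicz formula and the $\varepsilon$-contraction on $2$-forms. The only differences are cosmetic: the paper takes the explicit Clifford-multiplication unitary $u(x)=\bar{\rmc}_{2m}\bar{\rmc}(x)$ (with $\int_{S^{2m-1}}\ch(u)=1$ by Getzler and the curvature bound quoted from Li--Su--Wang), whereas you use an arbitrary generator of $K^{-1}(S^n)\otimes\mathbb{Q}$ with a general curvature estimate, and you write out the Stokes step that the paper only asserts.
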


As mentioned in the beginning of this subsection, Theorem~\ref{T:area enlar} was proved by applying a relative index theorem in even dimensions. The following is an alternate proof in odd-dimensional case.

\begin{proof}[Proof of Theorem~\ref{T:area enlar} in odd dimensions]
Suppose $M$ is a compact area enlargeable manifold of dimension $2m-1$ ($m\ge2$) and carries a metric $g$ of scalar curvature $\kappa>0$. Then there exists a constant $\kappa_0>0$ such that $\kappa\ge\kappa_0$ on $M$.

As in Getzler \cite{Getzler93} and Li--Su--Wang \cite{LiSuWang24}, consider the trivial bundle $S^{2m-1}(1)\times\slaS_{2m}^+$ over $S^{2m-1}(1)$, where $\slaS_{2m}^+$ is the Hermitian space of half spinors for $\CC l(\RR^{2m})$, the complexified Clifford algebra of $\RR^{2m}$. Let $\bar{\rmc}(\cdot)$ denote the Clifford multiplication of $\CC l(\RR^{2m})$ on $\slaS_{2m}^+$, and $\bar{\rmc}_i$ $(i=1,\cdots,2m)$ denote $\bar{\rmc}(\p_i)$, where $\{\p_i\}_{i=1}^{2m}$ is the canonical oriented orthonormal basis of $\RR^{2m}$. Introduce $u:S^{2m-1}(1)\to U_{2^{m-1}}(\CC)$ as
\[
u(x)=\bar{\rmc}_{2m}\bar{\rmc}(x)=\sum_{i=1}^{2m}x^i\cdot\bar{\rmc}_{2m}\bar{\rmc}_i.
\]

Let $\widetilde{M}$ (with the lifted metric $\tilg$) and $f:\widetilde{M}\to S^{2m-1}(1)$ be as in Definition~\ref{D:area enlar} for a given $\varepsilon>0$. By the discussion in last subsection, $u$ induces a family of connections on $F=f^*(S^{2m-1}(1)\times\slaS_{2m}^+)$. And we get a family of Dirac--Schr\"odinger operators $\slaD_{E,r}$, $r\in[0,1]$ on the twisted bundle $E=\slaS_{\widetilde{M}}\otimes F$ over $\widetilde{M}$. By Corollary~\ref{C:sp flow-pull back} (see also \cite[Proposition 3.3]{LiSuWang24}),
\begin{equation}\label{E:sp flow-1}
\spf(\slaD_{E,r})_{[0,1]}=\int_{\widetilde{M}}\hat{A}(\widetilde{M},\tilg)f^*\ch(u)=\deg(f)\int_{S^{2m-1}(1)}\ch(u)=\deg(f),
\end{equation}
where the second equality follows from the fact that all the terms in $\ch(u)$ are exact forms except the top-degree part, and the last equality uses a formula of Getzler \cite[Proposition~1.4]{Getzler93}.

On the other hand, we have the Lichnerowicz-type formula
\[
\slaD_{E,r}^2=(\nabla_r^E)^*\nabla_r^E+\frac{\kappa_{\widetilde{M}}}{4}+\calR_r^F,
\]
where $\calR_r^F=\sum_{i<j}\rmc(e_i)\rmc(e_j)R_r^F(e_i,e_j)$ with respect to a local orthonormal frame $\{e_i\}_{i=1}^{2m-1}$ of $T\widetilde{M}$.
When $u$ is chosen as above, $R_r^F$ is explicitly computed in \cite{LiSuWang24}. In particular, if $f$ is $\varepsilon$-contracting on 2-forms, then for any $r\in[0,1]$,
\[
\langle\calR_r^Fs,s\rangle_{L^2(E)}\ge-\frac{(2m-1)(2m-2)}{4}\varepsilon\|s\|_{L^2(E)}^2,\quad\forall s\in L^2(E).
\]
From this and the fact that $\kappa_{\widetilde{M}}\ge\kappa_0>0$, one immediate sees that by choosing $\varepsilon$ small enough, $\slaD_{E,r}$ is invertible for any $r\in[0,1]$. Hence, $\spf(\slaD_{E,r})_{[0,1]}=0$. Since $f$ has non-zero degree, this contradicts \eqref{E:sp flow-1}. Therefore, $M$ cannot admit a PSC metric.
\end{proof}

\section{Index theorem related to manifolds with uniformly PSC metrics}\label{S:ind psc}

In this section, we derive an index formula involving relative eta invariants in the case of uniformly PSC metrics. These results will be applied to the study of uniformly PSC metrics on certain non-compact manifolds in the next section.

\subsection{Space of uniformly PSC metrics which coincide at infinity}\label{SS:space psc}

Let $M$ be a non-compact manifold without boundary, we will consider complete metrics of uniformly positive scalar curvature on $M$ which coincide at infinity. Let $g$ be such a metric and let $\frakR^+_\infty(M,g)$ denote the space of complete uniformly PSC metrics on $M$ which coincide with $g$ outside a compact subset. Endowing $\frakR^+_\infty(M,g)$ with the $C^\infty_{\rm loc}$-topology, we introduce the following equivalence relations on this space, which have already appeared in compact situations.

\begin{definition}\label{D:equiv rela psc}
(1) Two metrics $g_0,g_1\in\frakR^+_\infty(M,g)$ are called \emph{PSC-isotopic} if they lie in the same path component in $\frakR^+_\infty(M,g)$. In this case, a (smooth) path connecting $g_0$ and $g_1$ is called a \emph{PSC-isotopy} between $g_0$ and $g_1$.

(2) Two metrics $g_0,g_1\in\frakR^+_\infty(M,g)$ are called \emph{PSC-concordant} if there exists a smooth metric $g_{0,1}$ of uniformly positive scalar curvature on $M\times[0,a]$ for some $a>0$, such that
\begin{enumerate}
\item $g_{0,1}$ is a product metric near the boundary;
\item $g_{0,1}|_{M\times\{0\}}=g_0$, $g_{0,1}|_{M\times\{a\}}=g_1$;
\item $g_{0,1}$ is a product metric outside a compact subset of $M\times[0,a]$.
\end{enumerate}
In this case, $(M\times[0,a],g_{0,1})$ is called a \emph{PSC-concordance} between $g_0$ and $g_1$.

(3) Two metrics $g_0,g_1\in\frakR^+_\infty(M,g)$ are called \emph{PSC-cobordant} if there exist a manifold $W$ with boundary and a smooth metric $g_W$ of uniformly positive scalar curvature on $W$, such that
\begin{enumerate}
\item $\p W=M\sqcup-M$, where $-M$ denotes $M$ with opposite orientation;
\item $g_W$ is a product metric near the boundary;
\item $g_W|_M=g_0$, $g_W|_{-M}=g_1$;
\item $W$ is isometric to $M'\times[0,b]$ (with product metric) outside a compact subset, where $M'$ is $M$ removing a compact subset.
\end{enumerate}
In this case, $(W,g_W)$ is called a \emph{PSC-cobordism} between $g_0$ and $g_1$.
\end{definition}

\begin{remark}\label{R:equiv rela psc-1}
In cases (2) and (3), if $g_{0,1}$ (resp. $g_W$) is not required to be of positive scalar curvature (which can only happen on an interior relatively compact subset), then $(M\times[0,a],g_{0,1})$ (resp. $(W,g_W$)) is just called a \emph{concordance} (resp. \emph{cobordism}) between $g_0$ and $g_1$.
\end{remark}

\begin{remark}\label{R:equiv rela psc-2}
Like the compact case, it can be shown that PSC-isotopic metrics must be PSC-concordant (cf. Gromov--Lawson \cite[Lemma~3]{GromovLawson80classification}, Rosenberg--Stolz \cite[Proposition~3.3]{RosenbergStolz01}). More clearly, PSC-concordant metrics must be PSC-cobordant. In other words,
\[
\text{PSC-isotopy}\Longrightarrow\text{PSC-concordance}\Longrightarrow\text{PSC-cobordism}.
\]
\end{remark}

\subsection{Index formula on a cobordism for uniformly PSC metrics}\label{SS:ind thm psc}

On a Riemannian spin manifold $(M,g)$, as in Example~\ref{Eg:Dirac-Schro-1}, one can consider the spin Dirac operator $\slaD_\slaS:\dom\slaD_\slaS\to L^2(\slaS)$ and the twisted spin Dirac operator $\slaD_{\slaS\otimes  F}:\dom\slaD_{\slaS\otimes  F}\to L^2(\slaS\otimes F)$, where $\slaS\to M$ is the spinor bundle and $F\to M$ is a Hermitian vector bundle with Hermitian connection. 
If $g$ is a metric of uniformly PSC and \emph{bounded sectional curvature}, for $g_0,g_1\in\frakR^+_\infty(M,g)$, let $\slaS_0$ and $\slaS_1$ be the associated spinor bundles. Then $\slaD_{\slaS_0}$ and $\slaD_{\slaS_1}$ are Dirac--Schr\"odinger operators (with empty essential support). So the relative eta invariant $\eta(\slaD_{\slaS_1},\slaD_{\slaS_0})$ is well-defined (cf. Proposition~\ref{P:rel-eta}). Similarly, the relative eta invariant $\eta(\slaD_{\slaS_1\otimes F},\slaD_{\slaS_0\otimes F})$ of the twisted spin Dirac operators can be defined, provided that the term $\frac{\kappa}{4}+\calR^F$ in \eqref{E:Lich for-1} is uniformly positive outside a compact subset.

\begin{remark}\label{R:rel eta vanish}
In certain dimensions, the relative eta invariants would vanish just like the eta invariants (cf. Atiyah--Patodi--Singer \cite[p. 61]{APS1}). When $\dim M$ is even, the Clifford multiplication of the volume form anti-commutes with $\slaD_{\slaS_j\otimes F}$ ($j=0,1$), which implies the vanishing of $\eta(\slaD_{\slaS_1\otimes F},\slaD_{\slaS_0\otimes F})$. When $\dim M\equiv1$ mod 4, there also exists an involution on $\slaS_j$ which anti-commutes with the untwisted spin Dirac operator $\slaD_{\slaS_j}$. In this case, $\eta(\slaD_{\slaS_1},\slaD_{\slaS_0})$ vanishes.
\end{remark}

Assume $\dim M$ is odd. For $g_0,g_1\in\frakR^+_\infty(M,g)$, let $(W,g_W)$ be a cobordism (not necessarily a PSC-cobordism) between $g_0$ and $g_1$. Suppose the spin structure on $M$ extends over $W$. Then we can talk about the spin Dirac operator $\slaD_{\slaS_W}$ on $W$, where $\slaS_W\to W$ is the corresponding spinor bundle. There is a $\ZZ_2$-grading $\slaS_W=\slaS_W^+\oplus\slaS_W^-$ on the even-dimensional manifold $W$ so that $\slaD_{\slaS_W}=\slaD_{\slaS_W}^+\oplus\slaD_{\slaS_W}^-$ is $\ZZ_2$-graded. Recall that $\p W=M\sqcup-M$. It follows that $\slaD_{\slaS_0}$ and $-\slaD_{\slaS_1}$ are the restrictions of $\slaD_{\slaS_W}^+$ to the two boundary components (with respect to the inward-pointing normal). We elongate $W$ by attaching two half-cylinders $M\times(-\infty,0]$ and $M\times[0,\infty)$ to the corresponding boundary components of $W$ to get a complete manifold without boundary
\[
\widetilde{W}=M\times(-\infty,0]\cup_M W\cup_{-M}M\times[0,\infty).
\]
All the structures can be extended to the cylinder parts in product form. In particular, the resulting metric $\tilde{g}$ on $\widetilde{W}$ will be of uniformly PSC outside a compact subset. Let 
\[
\slaD_{\slaS_{\widetilde{W}}}:\dom\slaD_{\slaS_{\widetilde{W}}}\to L^2(\slaS_{\widetilde{W}})
\]
be the extension of $\slaD_{\slaS_W}$ to $\widetilde{W}$, where again, $\dom\slaD_{\slaS_{\widetilde{W}}}$ consists of all $s\in L^2(\slaS_{\widetilde{W}})$ such that $\slaD_{\slaS_{\widetilde{W}}}s\in L^2(\slaS_{\widetilde{W}})$. Then $\slaD_{\slaS_{\widetilde{W}}}$ is invertible at infinity, thus is a Fredholm operator. Therefore, we can consider the $L^2$-index
\[
\ind\slaD_{\slaS_{\widetilde{W}}}^+:=\dim\ker\slaD_{\slaS_{\widetilde{W}}}^+-\dim\ker\slaD_{\slaS_{\widetilde{W}}}^-.
\]

This index represents the APS index as mentioned in the Introduction and can somehow be thought of as a non-compact generalization of the quantity $i(g_0,g_1)$ considered by Gromov--Lawson \cite[(3.13)]{GromovLawson83}. Theorem~\ref{T:cobor rel-eta}, which can be regarded as an APS-type index formula in the non-compact boundary situation, shows that this index can be computed via the relative eta invariant.

\begin{theorem}\label{T:cobor rel-eta}
As described above, let $(W,g_W)$ be a cobordism between two metrics $g_0,g_1\in\frakR_\infty^+(M,g)$ of bounded sectional curvature, where $\dim M$ is odd, and let $(\widetilde{W},\tilde{g})$ be the elongation of $(W,g_W)$. Let $\slaD_{\slaS_0},\slaD_{\slaS_1}$ and $\slaD_{\slaS_{\widetilde{W}}}$ be the corresponding spin Dirac operators on $(M,g_0),(M,g_1)$ and $(\widetilde{W},\tilde{g})$, respectively. Then
\begin{equation}\label{E:cobor rel-eta}
\ind\slaD_{\slaS_{\widetilde{W}}}^+=\int_W\hatA(W,g_W)+\frac{1}{2}\eta(\slaD_{\slaS_1},\slaD_{\slaS_0}),
\end{equation}
where $\hatA(W,g_W)$ is the $\hatA$-genus form defined in \eqref{E:A-hat form}. In particular, if $(W,g_W)$ is a PSC-cobordism between $g_0$ and $g_1$, then
\[
\int_W\hatA(W,g_W)+\frac{1}{2}\eta(\slaD_{\slaS_1},\slaD_{\slaS_0})=0.
\]
\end{theorem}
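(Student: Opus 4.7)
The plan is to compute $\ind\slaD^+_{\widetilde W}$ by comparing with the $L^2$-index on the elongation of a PSC-concordance, using a Gromov--Lawson-type relative index theorem, and then to identify the resulting correction as the relative eta invariant via a compact APS cut.

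\emph{Step 1: From PSC-isotopy to PSC-concordance.} By Remark~\ref{R:equiv rela psc-2}, I would first upgrade the PSC-isotopy between $g_0$ and $g_1$ to a PSC-concordance $V=(M\times[0,1],h)$, a complete uniformly PSC manifold with $h|_{M\times\{0\}}=g_0$ and $h|_{M\times\{1\}}=g_1$, product near the boundary and product outside a compact subset. Let $(\widetilde V,\tilde h)$ denote the elongation of $V$ by attaching $(M,g_0)$- and $(M,g_1)$-cylinders at the two ends.

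\emph{Step 2: Gromov--Lawson relative index against $\widetilde V$.} After reparametrizing the cobordism direction so that $W$ and $V$ have the same length, the manifolds $\widetilde W$ and $\widetilde V$ are both diffeomorphic to $M\times\RR$ and carry identical metrics outside a compact subset (determined by the common reference $g$ on $M$ together with the product structure in the $t$-direction). The Gromov--Lawson-type relative index theorem for Dirac operators agreeing at infinity would then give
\[
\ind\slaD^+_{\widetilde W}-\ind\slaD^+_{\widetilde V}=\int_W\hatA(W,g_W)-\int_V\hatA(V,h),
\]
the cylindrical-end contributions vanishing since $\hat A$, a polynomial in curvature $2$-forms, carries no top-degree $dt$-component on a product. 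Because $V$ is uniformly PSC, Lichnerowicz~\eqref{E:Lich for} makes $\slaD^+_{\widetilde V}$ invertible in $L^2$, so $\ind\slaD^+_{\widetilde V}=0$. The theorem will therefore follow once $\int_V\hatA(V,h)=-\tfrac12\,\eta(\slaD_{\slaS_1},\slaD_{\slaS_0})$ is established.

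\emph{Step 3: APS on a compact cut of $V$.} To identify the correction, I would pick a compact hypersurface $\Sigma\subset M$ lying outside the region where $g_0,g_1$ differ from $g$, and cut $V$ along $\Sigma\times[0,1]$ to obtain a compact manifold with corners $V_K\Subset V$ whose complement in $V$ is a product. After smoothing corners, I would apply the classical APS index theorem to $\slaD^+_{V_K}$. Its APS-index coincides with the $L^2$-index $\ind\slaD^+_{\widetilde V}=0$ (again using the product structure outside $V_K$), so
\[
0=\int_{V_K}\hatA-\tfrac12\,\eta\big(\slaD_{\partial V_K}\big),
\]
the kernel correction vanishing since uniformly PSC makes each boundary operator invertible. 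The boundary $\partial V_K$ has three faces: compact pieces of $(M,g_0)$ and $(M,g_1)$ (with opposite orientations) and a cylindrical face $\Sigma\times[0,1]$. The cylindrical face contributes zero eta by the standard anti-commuting-involution argument for product cylinders, while by Corollary~\ref{C:splitting} the remaining two contributions assemble into $-\eta(\slaD_{\slaS_1},\slaD_{\slaS_0})$. Since $\int_{V_K}\hatA=\int_V\hatA$ (the complement is product), substitution gives the required identity and, combined with Step~2, proves the theorem.

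\emph{Main obstacle.} The most delicate point is the APS computation on the manifold-with-corners $V_K$ in Step~3: I would need either to invoke a corner version of APS (in the style of Loya--Park or M\"uller) or to smooth the corners while carefully tracking the variation of the eta contributions, and to verify that the cylindrical face $\Sigma\times[0,1]$ indeed contributes zero eta. A secondary concern is ensuring that after the reparametrization in Step~2 the spin bundles and Dirac operators on $\widetilde W$ and $\widetilde V$ genuinely agree outside a compact subset, so that a Gromov--Lawson-type relative index theorem applies; this should hold because all relevant data are determined by the common reference $g$.
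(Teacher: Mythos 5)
Your Steps 1--2 are sound and essentially reproduce the paper's Lemma~\ref{L:cobor rel-eta}: the relative index theorem reduces everything to showing $\int_V\hatA(V,h)=-\tfrac12\,\eta(\slaD_{\slaS_1},\slaD_{\slaS_0})$ for a PSC-concordance $V$. The gap is in Step~3. The gluing formula (Corollary~\ref{C:splitting}) and the whole boundary framework of Subsection~\ref{SS:rel-eta bdry} require the operators to have product form $\sigma(\p_u+\calB)$ in a collar of the cutting hypersurface $\Sigma$. For arbitrary $g_0,g_1\in\frakR_\infty^+(M,g)$ the metrics agree with $g$ near your chosen $\Sigma$, but $g$ itself is just some uniformly PSC metric of bounded sectional curvature and need not be a product near any hypersurface; deforming it to a product while preserving uniform PSC is exactly the obstruction the paper flags (this is why the cutting argument is reserved for the connected-sum case, Theorem~\ref{T:cobor conn-sum}, where the Gromov--Lawson neck provides the product structure). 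On top of that, even with product collars, the APS formula on the manifold-with-corners $V_K$ carries genuine corner correction terms (choices of Lagrangians/scattering data at $\Sigma\times\{0,1\}$) that you have not controlled, and the claim that the face $\Sigma\times[0,1]$ contributes zero eta does not follow from the even-dimensional involution argument, since that face is odd-dimensional. So Step~3 as written does not go through.

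The paper closes the same gap by a variational argument instead of a cut-and-paste one: writing $\eta'(\slaD_{\slaS_1},\slaD_{\slaS_0}):=2\bigl(\ind\slaD_{\slaS_{\widetilde W}}^+-\int_W\hatA\bigr)$, which by Lemma~\ref{L:cobor rel-eta} depends only on the endpoints, one follows the isotopy $g_r$. Along the path every $\slaD_{\slaS_r}$ is invertible (uniform PSC), so the spectral flow vanishes and $\eta(\slaD_{\slaS_r},\slaD_{\slaS_0})$ varies smoothly; meanwhile $\eta'(\slaD_{\slaS_r},\slaD_{\slaS_0})=-2\int_{W_r}\hatA$ on a PSC-concordance. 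Both quantities vanish at $r=0$ and have the same derivative, computed from local heat-kernel asymptotics as in \cite{Shi22}; integrating gives $\eta'=\eta$. If you want to salvage your outline, replace Step~3 by this derivative comparison rather than a corner APS theorem.
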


\begin{remark}\label{R:cobor rel-eta}
Since $(W,g_W)$ has product structure outside a compact subset, the top-degree part of $\hatA(W,g_W)$ is compactly supported. Thus the integral in \eqref{E:cobor rel-eta} is well-defined.
\end{remark}

The proof of Theorem~\ref{T:cobor rel-eta} occupies the remaining of this section. The first ingredient is Lemma~\ref{L:cobor rel-eta}, which is a direct consequence of Gromov--Lawson's relative index theorem \cite[Theorem~4.18]{GromovLawson83} and B\"ar--Ballmann \cite[Theorem~1.21]{BaerBallmann12}.

\begin{lemma}\label{L:cobor rel-eta}
Under the hypothesis of Theorem~\ref{T:cobor rel-eta}, the quantity
\[
\ind\slaD_{\slaS_{\widetilde{W}}}^+-\int_W\hatA(W,g_W)
\]
depends only on $g_0$ and $g_1$.
\end{lemma}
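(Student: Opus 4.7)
\medskip

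\noindent\textbf{Proof proposal for Lemma~\ref{L:cobor rel-eta}.}

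The plan is to apply the Gromov--Lawson relative index theorem to a pair of cobordisms sharing the same boundary metrics. Let $(W_0,g_{W_0})$ and $(W_1,g_{W_1})$ be two cobordisms between $g_0$ and $g_1$ as in the statement, with elongations $(\widetilde{W}_0,\tilde g_0)$ and $(\widetilde{W}_1,\tilde g_1)$. The goal is to prove
\[
\ind \slaD_{\slaS_{\widetilde{W}_0}}^+-\int_{W_0}\hatA(W_0,g_{W_0})
\;=\;
\ind \slaD_{\slaS_{\widetilde{W}_1}}^+-\int_{W_1}\hatA(W_1,g_{W_1}).
\]

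The first step is to identify the common ``end'' structure. By Definition~\ref{D:equiv rela psc}(3), each $W_i$ is isometric outside a compact piece to $M'\times[0,b_i]$ with the product of $g|_{M'}$ and $ds^2$, and the elongation attaches two half-cylinders $M\times(-\infty,0]$ and $M\times[0,\infty)$ carrying the product metrics coming from $g_0$ and $g_1$. Consequently there exist compact subsets $K_i\subset\widetilde{W}_i$ and an isometry $\widetilde{W}_0\setminus K_0\cong \widetilde{W}_1\setminus K_1$ which is covered by an isomorphism of spinor bundles intertwining $\slaD_{\slaS_{\widetilde{W}_0}}^+$ and $\slaD_{\slaS_{\widetilde{W}_1}}^+$. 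Moreover, because $\tilde g_i$ has uniformly positive scalar curvature outside a compact set, both elongated Dirac operators are Fredholm (as invoked in Subsection~\ref{SS:ind thm cobor}), so their $L^2$-indices are well defined.

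Next I would invoke the Gromov--Lawson relative index theorem (\cite[Theorem~4.18]{GromovLawson83}, or equivalently \cite[Theorem~1.21]{BaerBallmann12}) applied to the pair $(\slaD_{\slaS_{\widetilde{W}_0}}^+,\slaD_{\slaS_{\widetilde{W}_1}}^+)$. It yields
\[
\ind\slaD_{\slaS_{\widetilde{W}_0}}^+-\ind\slaD_{\slaS_{\widetilde{W}_1}}^+
\;=\;
\int_{\widetilde{W}_0}\hatA(\widetilde{W}_0,\tilde g_0)-\int_{\widetilde{W}_1}\hatA(\widetilde{W}_1,\tilde g_1),
\]
where the difference of the two integrands is compactly supported by the isometry at infinity, so the right-hand side is finite and well defined. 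The third step is to reduce each elongated integral back to $W_i$. On a Riemannian product of the form $(M'\times I,g|_{M'}+ds^2)$ the curvature tensor has no component in the $\partial_s$ direction, hence the top-degree part of $\hatA$ vanishes on each half-cylinder and on the product portion of $W_i$'s collars. Therefore
\[
\int_{\widetilde{W}_i}\hatA(\widetilde{W}_i,\tilde g_i)\;=\;\int_{W_i}\hatA(W_i,g_{W_i}),
\]
and rearranging yields the desired equality.

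The main technical point to verify is the hypothesis of the relative index theorem: one needs that the two Dirac operators, together with their bundles, agree \emph{via an isometry} outside compact sets, and that the local index forms at infinity coincide. Both facts are built into Definition~\ref{D:equiv rela psc}(3)(iv) (product structure outside a compact subset) and the naturality of the spinor bundle construction, so this is essentially bookkeeping rather than a genuine obstacle. The only subtle issue is the potential dependence on the choice of spin structure on $W_i$ extending that of $M$; however this does not affect the claim, since both sides of the asserted equality refer to the spin structures already fixed on $\widetilde{W}_0$ and $\widetilde{W}_1$.
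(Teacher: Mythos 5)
Your argument is correct and is precisely the route the paper takes: the paper justifies this lemma only by calling it ``a direct consequence of Gromov--Lawson's relative index theorem'' \cite{GromovLawson83}, \cite{BaerBallmann12}, and your proposal supplies exactly the missing details (isometry of the ends, the relative index identity, and the vanishing of the top-degree part of $\hatA$ on the product regions). The one small point worth flagging is that two cobordisms may have product regions $M'\times[0,b_0]$ and $M'\times[0,b_1]$ of different lengths, so to obtain a genuine isometry of the complements of compact sets one should first insert a product collar to normalize $b_0=b_1$ (which changes neither the $L^2$-index nor the $\hatA$-integral) --- this falls under the bookkeeping you already anticipate.
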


\subsection{\texorpdfstring{$L^2$-}{L2-}index and the spectral flow on a cylinder}\label{SS:L2 ind spf}

For $g_0,g_1\in\frakR_\infty^+(M,g)$, let $g_r$, $r\in[0,1]$ be a smooth path of Riemannian metrics (not necessarily PSC metrics) connecting $g_0$ and $g_1$ such that each $g_r$ agrees with $g$ at infinity. Fix a smooth non-decreasing function $\kappa:[0,1]\to[0,1]$ such that $\kappa(r)=0$ for $r\le1/3$ and $\kappa(r)=1$ for $r\ge2/3$. We form a family of spin Dirac operators
\[
\slaD_{\slaS_{\kappa(r)}}:\dom\slaD_{\slaS_{\kappa(r)}}\to L^2(\slaS_{\kappa(r)}),\quad 0\le r\le1
\]
on $M$, where $\dom\slaD_{\slaS_{\kappa(r)}}$ is endowed with the graph norm. These operators have varying domains, as the structures of spinor bundles depend on the metric. However, as discussed in Bourguignon--Gauduchon \cite[Section~2]{BG92spinor} and Bandara--McIntosh--Ros\'en \cite[Section~3]{BMR18}, there is a fiberwise unitary map $\slaU_{\kappa(r)}:L^2(\slaS_0)\to L^2(\slaS_{\kappa(r)})$ for each $r\in[0,1]$, which is identity outside the compact subset where the metrics differ. Using conjugation by $\slaU_{\kappa(r)}$, we can view $\slaD_{\slaS_{\kappa(r)}}$ as a family of general elliptic first-order differential operators on a fixed domain $\dom\slaD_{\slaS_0}$. (In fact, the graph norms of $\slaD_{\slaS_{\kappa(r)}}$ for different $r$ are equivalent, see \cite[Section~3]{BG92spinor}.) They are Riesz continuous by \cite{BMR18}, thus the spectral flow $\spf(\slaD_{\slaS_{\kappa(r)}})_{[0,1]}$ can be defined.

From above, we have a concordance between $g_0$ and $g_1$. Precisely, equip $W:=M\times[0,1]$ with the metric $g_{\kappa(r)}+dr^2$, where $r$ denotes the coordinate along $[0,1]$. As in last subsection, we can consider the $\ZZ_2$-graded spin Dirac operators $\slaD_{\slaS_W}$ on $W$ and $\slaD_{\slaS_{\widetilde{W}}}$ on the elongation $\widetilde{W}=M\times\RR$. In this case, we have
\begin{equation}\label{E:Dirac elonga}
\slaD_{\slaS_{\widetilde{W}}}^+=\rmc(\nu)(\p_r+\slaD_{\slaS_{\tilde{\kappa}(r)}}),\qquad
\slaD_{\slaS_{\widetilde{W}}}^-=-(-\p_r+\slaD_{\slaS_{\tilde{\kappa}(r)}})\rmc(\nu),
\end{equation}
where $\nu$ is the unit normal vector field in the direction of $r$ and $\tilde{\kappa}:\RR\to[0,1]$ is the smooth extension of $\kappa$ such that $\tilde{\kappa}(r)=0$ for $r<0$ and $\tilde{\kappa}(r)=1$ for $r>1$. In this case, the $L^2$-index of $\slaD_{\slaS_{\widetilde{W}}}^+$ can be computed by a spectral flow.

\begin{theorem}\label{T:L2 ind spf}
Under the above setting, it holds that
\[
\ind\slaD_{\slaS_{\widetilde{W}}}^+=\spf(\slaD_{\slaS_{\kappa(r)}})_{[0,1]}.
\]
\end{theorem}

\begin{remark}\label{R:L2 ind spf}
As before, $\dom\slaD_{\slaS_{\widetilde{W}}}\subset L^2(\slaS_{\widetilde{W}})$ is endowed with the graph norm of $\slaD_{\slaS_{\widetilde{W}}}$. In view of \eqref{E:Dirac elonga}, it is equivalent to the graph norm of $\pm\p_r+\slaD_{\slaS_{\tilde{\kappa}(r)}}$. By the variation of spin Dirac operators associated to different metrics (cf. \cite[Theorem~3.3]{BG92spinor}), it is again equivalent to the graph norm of $\pm\p_r+\slaD_{\slaS_0}$. From this perspective, we can view $\slaD_{\slaS_{\widetilde{W}}}$ as an elliptic first-order differential operator mapping from $H^1(\RR,L^2(\slaS_0))\cap L^2(\RR,\dom\slaD_{\slaS_0})$ to $L^2(\RR,L^2(\slaS_0))$. The norm on the domain is given by
\[\label{E:cylinder norm}
\|s\|^2_{\slaD_{\slaS_{\widetilde{W}}}}=\int_{-\infty}^\infty\big(\|s\|^2_{\slaD_{\slaS_0}}+\|\p_rs\|^2_{L^2(\slaS_0)}\big)dr,
\]
for $s\in H^1(\RR,L^2(\slaS_0))\cap L^2(\RR,\dom\slaD_{\slaS_0})$. This is what we will need later in this subsection.
\end{remark}

Results in the form of Theorem~\ref{T:L2 ind spf} appear often in the literature on index theory, and have been studied extensively under different conditions, especially in compact situations. Here we follow the strategy of Robbin--Salamon \cite[Section~4]{RobbinSalamon95} to prove this theorem in steps.

\begin{lemma}\label{L:homotop inv}
Let $g'_r$, $r\in[0,1]$ be another smooth path of metrics connecting $g_0$ and $g_1$ such that each $g'_r$ agrees with $g$ at infinity. Denote the corresponding spin Dirac operators by $\slaD_{\slaS_{\widetilde{W}}}'$ and $\slaD_{\slaS_{\kappa(r)}}'$. If $g'_r$ is homotopic to $g_r$, then
\[
\ind\slaD_{\slaS_{\widetilde{W}}}'^+=\ind\slaD_{\slaS_{\widetilde{W}}}^+,\qquad
\spf(\slaD_{\slaS_{\kappa(r)}}')_{[0,1]}=\spf(\slaD_{\slaS_{\kappa(r)}})_{[0,1]}.
\]
\end{lemma}

\begin{proof}
The equation about the spectral flow follows from the homotopy invariance of the spectral flow (cf. Booss-Bavnbek--Lesch--Phillips \cite[Proposition~2.3]{BoossLeschPhillips05}). For the index, notice that the metric defining $\slaD_{\slaS_{\widetilde{W}}}'$ is a compact perturbation of that defining $\slaD_{\slaS_{\widetilde{W}}}$. Again viewed as operators acting on a fixed domain, there exists a Riesz continuous family of operators connecting $\slaD_{\slaS_{\widetilde{W}}}$ and $\slaD_{\slaS_{\widetilde{W}}}'$. Hence there exists a norm-continuous family of Fredholm operators connecting $\frac{\slaD_{\slaS_{\widetilde{W}}}}{\sqrt{1+(\slaD_{\slaS_{\widetilde{W}}})^2}}$ and $\frac{\slaD_{\slaS_{\widetilde{W}}}'}{\sqrt{1+(\slaD_{\slaS_{\widetilde{W}}}')^2}}$, and the equation about the index follows from the homotopy invariance of the Fredholm index. (Alternatively, one can also use the relative index theory to get the proof.)
\end{proof}

From this lemma, by perturbing the path (and applying the Sard--Smale theorem) if necessary, we can assume henceforth that the set of points $r\in[0,1]$ where $\slaD_{\slaS_{\kappa(r)}}$ has a non-trivial kernel is discrete (thus finite), and the kernel is one-dimensional at each such point. Notice that the spectrum of each $\slaD_{\slaS_{\kappa(r)}}$ near zero consists of only eigenvalues with finite multiplicities.

\begin{lemma}\label{L:direct sum homotop}
Suppose the eigenvalues of $\slaD_{\slaS_{\kappa(r)}}$ cross zero $m$ times as $r$ varies from 0 to 1. Then there exists a path of Hermitian matrices $B(r)\in\End(\CC^m)$ such that the path
\[
\slaD_{\slaS_{\kappa(r)}}\oplus B(r):\dom\slaD_{\slaS_0}\oplus\CC^m\to L^2(\slaS_0)\oplus\CC^m,\quad 0\le r\le1
\]
is homotopic to a path of self-adjoint invertible operators.
\end{lemma}

\begin{proof}
Suppose the eigenvalues of $\slaD_{\slaS_{\kappa(r)}}$ cross zero at $r_0\in[1/3,2/3]$. By Bourguignon--Gauduchon \cite[Section~3]{BG92spinor}, the eigenvalues and eigenvectors of $\slaD_{\slaS_{\kappa(r)}}$ vary differentiably. In a small interval $(r_0-\varepsilon,r_0+\varepsilon)$ around $r_0$, let $\lambda(r)$ be the crossing eigenvalue which is non-zero at points other than $r_0$ and let $\psi(r)$ be the corresponding unit eigenvector. Define a bounded operator $\pi(r):L^2(\slaS_0)\to\CC$ by
\[
\pi(r)s:=\langle\psi(r),s\rangle_{L^2(\slaS_0)}.
\]

Choose a smooth cut-off function $\beta:[0,1]\to[0,1]$ such that $\beta(r)=1$ for $|r-r_0|\le\varepsilon/2$ and $\beta(r)=0$ for $|r-r_0|\ge\varepsilon$. Choose a $C^1$-function $b:[0,1]\to\RR$ such that $b(r)=-\lambda(r)$ for $|r-r_0|\le\varepsilon/2$, $b(r)\ne0$ for $r\ne r_0$, and $b(r)$ is constant for $|r-r_0|\ge\varepsilon$. Moreover, on $(r_0-\varepsilon,r_0+\varepsilon)$, we can require that $|b(r)+\lambda(r)|\le\delta/2$, where $\delta$ is the positive infimum of $|\lambda(r)|$ for $\varepsilon/2<|r-r_0|<\varepsilon$. Then one can construct a homotopy
\begin{equation}\label{E:direct sum homotop}
A_\theta(r):=\left(
\begin{matrix}
\slaD_{\slaS_{\kappa(r)}} & \theta\beta(r)\pi(r)^* \\
\theta\beta(r)\pi(r) & b(r)
\end{matrix}
\right),\quad 0\le\theta\le1
\end{equation}
between the paths $\slaD_{\slaS_{\kappa(r)}}\oplus b(r)=A_0(r)$ and $A_1(r)$. As long as we can show that $A_1(r)$ is invertible near $r_0$ and the eigenvalues of $A_1(r)$ cross zero $m-1$ times, the required path $B(r)$ can be obtained by doing this inductively.

In fact, outside $(r_0-\varepsilon,r_0+\varepsilon)$, for each $\theta$, the operator $A_\theta(r)$ is diagonal with $b(r)$ a non-zero constant. So the eigenvalues of $A_\theta(r)$ cross zero if and only if the eigenvalues of $\slaD_{\slaS_{\kappa(r)}}$ cross zero.

We make a detailed discussion about the invertibility of $A_1(r)$ on $(r_0-\varepsilon,r_0+\varepsilon)$. For any $s\oplus x\in\dom\slaD_{\slaS_0}\oplus\CC$, we write $s=s'\oplus s''$, where $s'\in\span_\CC\{\psi(r)\}$ and $s''$ is $L^2$-orthogonal to $\psi(r)$. Noticing that $\pi(r)^*x=x\psi(r)$, we compute
\[
\begin{aligned}
& \|A_1(r)(s\oplus x)\|^2_{L^2(\slaS_0)\oplus\CC} \\
& \qquad =\|\slaD_{\slaS_{\kappa(r)}}s+\beta(r)\pi(r)^*x\|^2_{L^2(\slaS_0)}+\big|\beta(r)\pi(r)s+b(r)x\big|^2 \\
& \qquad =\|\slaD_{\slaS_{\kappa(r)}}s\|^2_{L^2(\slaS_0)}+\beta^2(r)|x|^2+2\beta(r)\Re\big[\bar{x}\pi(r)(\slaD_{\slaS_{\kappa(r)}}s)\big] \\
& \qquad \quad+\beta^2(r)|\pi(r)s|^2+b^2(r)|x|^2+2\beta(r)b(r)\Re[\bar{x}\pi(r)s] \\
& \qquad =\lambda^2(r)\|s'\|^2_{L^2(\slaS_0)}+\|\slaD_{\slaS_{\kappa(r)}}s''\|^2_{L^2(\slaS_0)}+\beta^2(r)|x|^2 \\
& \qquad \quad+\beta^2(r)|\pi(r)s|^2+b^2(r)|x|^2+2\beta(r)\big(\lambda(r)+b(r)\big)\Re[\bar{x}\pi(r)s].
\end{aligned}
\]  
If $|r-r_0|\le\varepsilon/2$, then
\[
\|A_1(r)(s\oplus x)\|^2_{L^2(\slaS_0)\oplus\CC}\ge c_1\|s''\|^2_{L^2(\slaS_0)}+|x|^2+\|s'\|^2_{L^2(\slaS_0)}\ge c_2\|s\oplus x\|^2_{L^2(\slaS_0)\oplus\CC}
\]
for some constants $c_1>0$ (determined by the positive lower bound of the absolute values of the non-zero spectra of $\slaD_{\slaS_{\kappa(r)}}$) and $c_2=\min\{c_1,1\}>0$. If $\varepsilon/2<|r-r_0|<\varepsilon$, then
\[
\begin{aligned}
\|A_1(r)(s\oplus x)\|^2_{L^2(\slaS_0)\oplus\CC}& \ge\delta^2\|s'\|^2_{L^2(\slaS_0)}+c_1\|s''\|^2_{L^2(\slaS_0)}+\beta^2(r)|x|^2 \\
& \quad+b^2(r)|x|^2-\delta\beta(r)\Re[\bar{x}\pi(r)s] \\
& \ge\frac{3\delta^2}{4}\|s'\|^2_{L^2(\slaS_0)}+c_1\|s''\|^2_{L^2(\slaS_0)}+\delta'^2|x|^2 \\
& \quad+\frac{\delta^2}{4}|\pi(r)s|^2+\beta^2(r)|x|^2-\delta\beta(r)\Re[\bar{x}\pi(r)s] \\
& \ge c_3\|s\oplus x\|^2_{L^2(\slaS_0)\oplus\CC},
\end{aligned}
\]
where $\delta>0$ is given earlier, $\delta':=\inf_{\varepsilon/2<|r-r_0|<\varepsilon}b(r)>0$, and $c_3=\min\{c_1,3\delta^2/4,\delta'^2\}\\ >0$. In summary, it is proved that when $r\in(r_0-\varepsilon,r_0+\varepsilon)$, one always has
\[
\|A_1(r)(s\oplus x)\|^2_{L^2(\slaS_0)\oplus\CC}\ge c\|s\oplus x\|^2_{L^2(\slaS_0)\oplus\CC}
\]
for some constant $c>0$ that is independent of $r$, so that $A_1(r)$ is invertible.
\end{proof}

In view of Lemma~\ref{L:direct sum homotop}, we obtain
\[
\spf(\slaD_{\slaS_{\kappa(r)}}\oplus B(r))_{[0,1]}=0
\]
by the homotopy invariance of the spectral flow. We also want to consider the corresponding index. Note that now $\slaD_{\slaS_{\kappa(r)}}\oplus B(r)$ is no longer a family of operators on $M$. Nevertheless, by \eqref{E:Dirac elonga}, it is equivalent to consider
\[
\begin{aligned}
\ind\big(\p_r+\slaD_{\slaS_{\tilde{\kappa}(r)}}\oplus\tilde{B}(r)\big):=& \dim\ker\big(\p_r+\slaD_{\slaS_{\tilde{\kappa}(r)}}\oplus\tilde{B}(r)\big) \\
& -\dim\ker\big(-\p_r+\slaD_{\slaS_{\tilde{\kappa}(r)}}\oplus\tilde{B}(r)\big),
\end{aligned}
\]
where $\tilde{B}(r)$ is the natural extension of $B(r)$ to $r\in\RR$ such that it is constant for $r<0$ and $r>1$. Here as in Remark~\ref{R:L2 ind spf}, we view $\pm\p_r+\slaD_{\slaS_{\tilde{\kappa}(r)}}\oplus\tilde{B}(r)$ as operators mapping from $(H^1(\RR,L^2(\slaS_0))\cap L^2(\RR,\dom\slaD_{\slaS_0}))\oplus H^1(\RR,\CC^m)$ to $L^2(\RR,L^2(\slaS_0))\oplus L^2(\RR,\CC^m)$. 

\begin{lemma}\label{L:direct sum ind=0}
We have
\[
\ind\big(\p_r+\slaD_{\slaS_{\tilde{\kappa}(r)}}\oplus\tilde{B}(r)\big)=0.
\]
\end{lemma}

\begin{proof}
Without loss of generality, we can assume $m=1$ in Lemma~\ref{L:direct sum homotop}. For $\theta\in[0,1]$, let $\tilde{A}_\theta(r)$ be the natural extension of $A_\theta(r)$ as in \eqref{E:direct sum homotop} to $r\in\RR$. Then there is a norm-continuous family of operators $\p_r+\tilde{A}_\theta(r)$, $\theta\in[0,1]$ connecting $\p_r+\slaD_{\slaS_{\tilde{\kappa}(r)}}\oplus\tilde{B}(r)=\p_r+\tilde{A}_0(r)$ and $\p_r+\tilde{A}_1(r)$. Note that the bounded operator
\[
\beta(r)\pi(r):H^1(\RR,L^2(\slaS_0))\cap L^2(\RR,\dom\slaD_{\slaS_0})\to L^2(\RR,\CC)
\]
is actually the composition of the following maps
\begin{multline*}
H^1(\RR,L^2(\slaS_0))\cap L^2(\RR,\dom\slaD_{\slaS_0})\xrightarrow{\beta(r)\pi(r)}H^1([r_0-\varepsilon,r_0+\varepsilon],\CC) \\\hookrightarrow L^2([r_0-\varepsilon,r_0+\varepsilon],\CC)\hookrightarrow L^2(\RR,\CC).
\end{multline*}
The second map is a compact embedding by the Rellich embedding theorem. Hence $\beta(r)\pi(r)$, as well as $\beta(r)\pi(r)^*$, is a compact operator. This shows that $\p_r+\tilde{A}_\theta(r)$ is a compact perturbation of $\p_r+\slaD_{\slaS_{\tilde{\kappa}(r)}}\oplus\tilde{B}(r)$ (which is Fredholm). One then concludes that $\p_r+\tilde{A}_\theta(r)$ is a norm-continuous family of Fredholm operators.

To prove the lemma, we consider the new family of operators $\p_r+\tilde{A}_\theta(\frac{r}{a})$, $\theta\in[0,1]$, with $a>0$ a constant to be determined. This corresponds to rescaling the width of the concordance to $a$, with the metric $g_{\kappa(\frac{r}{a})}+dr^2$ on $M\times[0,a]$. By doing this, the end-point values are preserved. By the same reason as the above discussion, $\p_r+\tilde{A}_\theta(\frac{r}{a})$, $\theta\in[0,1]$ is a norm-continuous family of Fredholm operators, thus having the same index.
We examine the operator
\[
\p_rA_1\Big(\frac{r}{a}\Big)=\frac{1}{a}\left(
\begin{matrix}
\dot{\slaD}_{\slaS_{\kappa(\frac{r}{a})}} & \dot{\beta}(\frac{r}{a})\pi(\frac{r}{a})^*+\beta(\frac{r}{a})\dot{\pi}(\frac{r}{a})^* \\
\dot{\beta}(\frac{r}{a})\pi(\frac{r}{a})+\beta(\frac{r}{a})\dot{\pi}(\frac{r}{a}) & \dot{b}(\frac{r}{a})
\end{matrix}
\right),\quad 0\le r\le a,
\]
where $\dot{\alpha}$ denotes the differentiation of $\alpha$. By Bourguignon--Gauduchon \cite[Theorem~3.3]{BG92spinor}, $\dot{\slaD}_{\slaS_{\kappa(\frac{r}{a})}}$ is a first-order differential operator supported on a compact subset of $M$. The other entries are $L^2$-bounded operators. It can be seen that for any small $\epsilon>0$, one can choose $a$ large enough such that the graph norm of $\p_r\tilde{A}_1(\frac{r}{a})$ is dominated by $\epsilon$ times the graph norm of $\tilde{A}_1(\frac{r}{a})$ for all $r\in\RR$.

\medskip
\noindent\emph{Claim.} For $a$ large enough, $\pm\p_r+\tilde{A}_1(\frac{r}{a})$ are invertible operators.

For simplicity, we denote $\calW:=(H^1(\RR,L^2(\slaS_0))\cap L^2(\RR,\dom\slaD_{\slaS_0}))\oplus H^1(\RR,\CC)$ and $\calH:=L^2(\RR,L^2(\slaS_0))\oplus L^2(\RR,\CC)$. Then for any $\tilde{s}(r)=s(r)\oplus x(r)\in\calW$,
\[
\begin{aligned}
& \left\|\left(\p_r+\tilde{A}_1\Big(\frac{r}{a}\Big)\right)\tilde{s}\right\|^2_\calH \\
& \qquad =\|\p_r\tilde{s}\|^2_\calH+\left\|\tilde{A}_1\Big(\frac{r}{a}\Big)\tilde{s}\right\|^2_\calH-\int_{-\infty}^\infty\left\langle\left(\p_r\tilde{A}_1\Big(\frac{r}{a}\Big)\right)\tilde{s},\tilde{s}\right\rangle_{L^2(\slaS_0)\oplus\CC}dr \\
& \qquad \ge\left\|\tilde{A}_1\Big(\frac{r}{a}\Big)\tilde{s}\right\|^2_\calH-\|\tilde{s}\|_\calH\left\|\left(\p_r\tilde{A}_1\Big(\frac{r}{a}\Big)\right)\tilde{s}\right\|_\calH \\
& \qquad \ge\left\|\tilde{A}_1\Big(\frac{r}{a}\Big)\tilde{s}\right\|^2_\calH-\epsilon\|\tilde{s}\|_\calH\left(\|\tilde{s}\|^2_\calH+\left\|\tilde{A}_1\Big(\frac{r}{a}\Big)\tilde{s}\right\|^2_\calH\right)^{1/2} \\
& \qquad \ge\left\|\tilde{A}_1\Big(\frac{r}{a}\Big)\tilde{s}\right\|^2_\calH-\epsilon\|\tilde{s}\|_\calH\left\|\tilde{A}_1\Big(\frac{r}{a}\Big)\tilde{s}\right\|_\calH-\epsilon\|\tilde{s}\|^2_\calH.
\end{aligned}
\]
Recall in the proof of Lemma~\ref{L:direct sum homotop}, we have shown that $\|\tilde{A}_1(\frac{r}{a})\tilde{s}\|^2_\calH\ge c\|\tilde{s}\|^2_\calH$. Hence, for $a$ large enough, one can make $\epsilon$ small enough so that $\|(\p_r+\tilde{A}_1(\frac{r}{a}))\tilde{s}\|^2_\calH\ge C\|\tilde{s}\|^2_\calH$ for some constant $C>0$. Therefore, $\p_r+\tilde{A}_1(\frac{r}{a})$ is invertible. The other one is exactly the same, and the claim is proved.

From this claim, we can fix a large enough $a$ such that
\[
\ind\left(\p_r+\slaD_{\slaS_{\tilde{\kappa}(\frac{r}{a})}}\oplus\tilde{B}\Big(\frac{r}{a}\Big)\right)=\ind\left(\p_r+\tilde{A}_1\Big(\frac{r}{a}\Big)\right)=0.
\]
The term on the left-hand side is the sum of $\ind(\p_r+\slaD_{\slaS_{\tilde{\kappa}(\frac{r}{a})}})$ and $\ind(\p_r+\tilde{B}(\frac{r}{a}))$. It is clear that $\ind(\p_r+\tilde{B}(\frac{r}{a}))=\ind(\p_r+\tilde{B}(r))$. For the first one, note that by \eqref{E:Dirac elonga}, $\ind(\p_r+\slaD_{\slaS_{\tilde{\kappa}(\frac{r}{a})}})$ and $\ind(\p_r+\slaD_{\slaS_{\tilde{\kappa}(r)}})$ represent respectively indices of two spin Dirac operators on the elongations of two concordances between $g_0$ and $g_1$. Note that the metrics on these two elongations only differ on a compact subset. Arguing as in the proof of Lemma~\ref{L:homotop inv}, the two indices are equal. This implies that $\ind(\p_r+\slaD_{\slaS_{\tilde{\kappa}(\frac{r}{a})}})=\ind(\p_r+\slaD_{\slaS_{\tilde{\kappa}(r)}})$. With these combined, the lemma follows.
\end{proof}

\begin{remark}\label{R:direct sum ind=0}
In fact, $\ind(\p_r+\tilde{B}(r))$ can be computed directly. We know from the proof of Lemma~\ref{L:direct sum homotop} that if the eigenvalues of $\slaD_{\slaS_{\kappa(r)}}$ cross zero $m$ times as $r$ varies from 0 to 1, then $\tilde{B}(r)$ is a direct sum of $m$ real-valued functions $\tilde{b}(r)$, each of which is constant outside $[0,1]$. In this case, $\ind(\p_r+\tilde{B}(r))$ is equal to the difference of the number of those functions which are negative for $r<0$ and positive for $r>1$ and the number of those functions which are positive for $r<0$ and negative for $r>1$. By the way of constructing $b(r)$ in Lemma~\ref{L:direct sum homotop}, this is exactly opposite of the spectral flow of $\slaD_{\slaS_{\kappa(r)}}$, $r\in[0,1]$. So
\[
\ind\big(\p_r+\tilde{B}(r)\big)=-\spf(\slaD_{\slaS_{\kappa(r)}})_{[0,1]}.
\]
\end{remark}

Now for a path of metrics $g_r$, $r\in[0,1]$ satisfying the assumption above Lemma \ref{L:direct sum homotop}, Theorem~\ref{T:L2 ind spf} is a consequence of \eqref{E:Dirac elonga}, Lemma~\ref{L:direct sum ind=0} and Remark~\ref{R:direct sum ind=0}. For a general path, Theorem~\ref{T:L2 ind spf} follows from Lemma~\ref{L:homotop inv}.

\subsection{Proof of Theorem~\ref{T:cobor rel-eta}}\label{SS:pf APS ind thm}

With the help of Theorem~\ref{T:L2 ind spf}, the index on the elongation is transferred to a spectral flow. We can then use the results in Section~\ref{S:sp flow} to complete the proof of Theorem~\ref{T:cobor rel-eta}.

\begin{proof}[Proof of Theorem~\ref{T:cobor rel-eta}]
Let $g_0$ and $g_1$ be as in Theorem~\ref{T:cobor rel-eta}. By Lemma~\ref{L:cobor rel-eta}, it suffices to prove that formula \eqref{E:cobor rel-eta} holds for the $\ZZ_2$-graded spin Dirac operator $\slaD_{\slaS_{\widetilde{W}}}$ on the elongation $\widetilde{W}$ of $W:=M\times[0,1]$ constructed in last subsection. By Theorem~\ref{T:L2 ind spf}, it reduces to proving that
\begin{equation}\label{E:spf=rel eta}
\spf(\slaD_{\slaS_{\kappa(r)}})_{[0,1]}=\int_W\hatA(W,g_W)+\frac{1}{2}\eta(\slaD_{\slaS_1},\slaD_{\slaS_0}).
\end{equation}

As mentioned in the beginning of Subsection \ref{SS:L2 ind spf}, here $\slaD_{\slaS_{\kappa(r)}}$ is actually identified with the conjugation by $\slaU_{\kappa(r)}$. Strictly speaking, it is no longer a Dirac-type operator on $\dom\slaD_{\slaS_0}$. But it is not hard to check that the asymptotic expansion as in Lemma~\ref{L:var trun rel-eta} still exists, and the proof of Proposition~\ref{P:sp-flow trun rel-eta} still holds (indeed, the proofs of \cite[Theorem~5.8, Proposition~5.10]{Shi22} still hold) in this case. Note that both $\slaD_{\slaS_0}$ and $\slaD_{\slaS_1}$ are invertible, and the relative eta invariant does not change under conjugation by $\slaU_{\kappa(r)}$ (cf. Remark~\ref{R:rel-eta}). By Proposition~\ref{P:sp-flow trun rel-eta},
\[
\spf(\slaD_{\slaS_{\kappa(r)}})_{[0,1]}=\Big(\frac{\varepsilon}{\pi}\Big)^{1/2}\int_0^1\Tr\big(\dot{\slaD}_{\slaS_{\kappa(r)}}e^{-\varepsilon\slaD_{\slaS_{\kappa(r)}}^2}\big)dr+\frac{1}{2}\eta_\varepsilon(\slaD_{\slaS_1},\slaD_{\slaS_0}).
\]
So \eqref{E:spf=rel eta} is finally reduced to
\begin{equation}\label{E:lim spf=rel eta}
\lim_{\varepsilon\to0}\Big(\frac{\varepsilon}{\pi}\Big)^{1/2}\int_0^1\Tr\big(\dot{\slaD}_{\slaS_{\kappa(r)}}e^{-\varepsilon\slaD_{\slaS_{\kappa(r)}}^2}\big)dr=\int_W\hatA(W,g_W).
\end{equation}
A crucial point of this equation is that both sides are \emph{local}. So again as we did in the proof of Theorem~\ref{T:sp flow-trans}, one can use the locality to replace $\dot{\slaD}_{\slaS_{\kappa(r)}}e^{-\varepsilon\slaD_{\slaS_{\kappa(r)}}^2}$ by the corresponding one on the closed double $\widehat{M}$ of the compact subset of $M$ where the metrics differ. In this way the question is equivalent to the classical APS index theorem on the compact manifold $\widehat{M}\times[0,1]$, which certainly holds. Hence \eqref{E:lim spf=rel eta} is true and Theorem~\ref{T:cobor rel-eta} is proved.
\end{proof}

The following is a twisted version of Theorem~\ref{T:cobor rel-eta}.

\begin{corollary}\label{C:cobor twisted}
Let the hypothesis be as in Theorem~\ref{T:cobor rel-eta}. Suppose $F\to M$ is a unitary flat bundle that extends to a unitary flat bundle $F_W$ over $W$. Let $F_{\widetilde{W}}$ be the obvious extension of $F_W$ to $\widetilde{W}$. Then
\[
\ind\slaD_{\slaS_{\widetilde{W}}\otimes F_{\widetilde{W}}}^+=\int_W\hatA(W,g_W)\cdot\rank(F_{\widetilde{W}})+\frac{1}{2}\eta(\slaD_{\slaS_1\otimes F},\slaD_{\slaS_0\otimes F}).
\]
In particular, if $(W,g_W)$ is a PSC-cobordism between $g_0$ and $g_1$, then
\begin{equation}\label{E:cobor twisted}
\int_W\hatA(W,g_W)\cdot\rank(F_{\widetilde{W}})+\frac{1}{2}\eta(\slaD_{\slaS_1\otimes F},\slaD_{\slaS_0\otimes F})=0.
\end{equation}
\end{corollary}

\section{Space of uniformly PSC metrics on connected sums}\label{S:psc conn-sum}

It is known that eta invariants have important applications in studying positive scalar curvature problems. More precisely, they can be used to investigate the topology of the space of PSC metrics on compact manifolds. In this section, using the index formula derived in Section \ref{S:ind psc}, we shall prove some disconnectivity results about the (moduli) spaces of uniformly PSC metrics on non-compact connected sums.

We first recall some general results along this line for closed manifolds. Roughly speaking, in low dimensions (dimension 2 or 3), the (moduli) space of PSC metrics (when non-empty) is path-connected (even contractible) (cf. Rosenberg--Stolz \cite{RosenbergStolz01}, Marques \cite{Marques12},  Bamler--Kleiner \cite{BamlerKleiner19}); while in high dimensions (dimension $\ge4$), the space is disconnected in many cases. Such results include Hitchin \cite{Hitchin74}, Carr \cite{Carr88}, Botvinnik--Gilkey \cite{BotviGilkey95eta-psc,BotviGilkey96psc-spaceform}, Ruberman \cite{Ruberman98,Ruberman01}, Piazza--Schick \cite{PiazzaSchick07torsion-rho}, Mrowka--Ruberman--Saveliev \cite{MRS16}, etc. For more details, see Tuschmann--Wraith \cite{TuschmannWraith15book} and Carlotto \cite{Carlotto21}.

When the manifold is non-compact, there are some results in low dimensions recently, cf. Belegradek--Hu \cite{BeleHu15}, Bessi\`eres--Besson--Maillot--Marques \cite{BBMM21}. But in high dimensions, little is known. In this paper we restrict to the space of uniformly PSC metrics which coincide at infinity discussed in Subsection~\ref{SS:space psc}. In this case, some results mentioned above can be extended.

\subsection{Uniformly PSC metrics on connected sums}\label{SS:psc conn-sum}

Recall that a connected sum $M=N\#N'$ of two manifolds $N$ and $N'$ of the same dimension (without boundary) can be written as
\begin{equation}\label{E:conn-sum}
M=\mathring{N}\cup_{S\times I}\mathring{N}',
\end{equation}
where $\mathring{N}$ (resp. $\mathring{N}'$) is $N$ (resp. $N'$) with an open ball removed, $S$ is the common boundary of the balls (under certain identification), and $S\times I$ is the ``neck" part. As a special case, the connected sum of any manifold $M$ with a sphere of the same dimension is diffeomorphic to $M$.

Suppose $h$ (resp. $h'$) is a complete uniformly PSC metric on $N$ (resp. $N'$). Then by Gromov--Lawson \cite{GromovLawson80classification}, one can form a complete uniformly PSC metric $h\#h'$ on $M=N\#N'$. To be precise, one can deform the metric in a small ball around a given point preserving positive scalar curvature such that the metric near that point becomes the Riemannian product $\RR\times S(\varepsilon)$, where $S(\varepsilon)$ is the standard sphere in Euclidean space of radius $\varepsilon$ (which is small enough). After doing this deformation to both $(N,h)$ and $(N',h')$, one can paste them together along the cylindrical end. It should be pointed out that although the way to construct $h\#h'$ is not unique, they  will all coincide at infinity and lie in the same path component of $\frakR_\infty^+(M,h\#h')$. 

\begin{remark}\label{R:conn-sum}
For later use, $N$ will be a closed manifold. From the above discussion, it is convenient to use the notation $\frakR_\infty^+(M,h')$ to denote the space of complete uniformly PSC metrics on $M$ which coincide with $h\#h'$ at infinity, where $h$ can be any PSC metric on $N$. More generally, we can also consider the case that $N'$ has compact boundary, as long as the connected sum is taken away from the boundary. In this situation, we use $\frakR_{\infty,\pM}^+(M,h')$ (or just $\frakR_{\pM}^+(M,h')$ if $N'$ is compact) to denote the space of complete uniformly PSC metrics on $M$ which coincide with $h\#h'$ at infinity and near the boundary, with $h$ being any PSC metric on $N$.
\end{remark}

\subsection{Non-isotopic PSC metrics in dimensions \texorpdfstring{$4m-1$}{4m-1} with \texorpdfstring{$m\geq 2$}{m≥2}}\label{SS:4m-1}

In \cite{Carr88}, Carr shows that the space of PSC metrics on the $(4m-1)$-sphere $S^{4m-1}$ has infinitely many path components for $m\ge2$. As pointed out in Lawson--Michelsohn \cite[\S IV.7]{LawMic89}, Carr's argument works for any closed spin $(4m-1)$-manifolds which admit a PSC metric.

When considering non-compact manifolds, the argument in the closed case can be straightforwardly repeated to show the following.

\begin{proposition}\label{P:non-iso 4m-1}
Let $M$ be a non-compact spin $(4m-1)$-manifold (without boundary) which admits a uniformly PSC metric $g$. Then 
$\pi_0(\frakR_\infty^+(M,g))$ is infinite.
\end{proposition}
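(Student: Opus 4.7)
The strategy is to adapt Carr's construction \cite{Carr88} to the non-compact setting, using Theorem~\ref{T:cobor rel-eta} in place of the APS index theorem and the relative eta invariant in place of the ordinary eta invariant. Fix a small closed ball $D \subset M$. Since $\dim M = 4m-1 \ge 7$, there exists a closed spin $4m$-manifold $B$ with $\hatA(B) \ne 0$ (for instance a Bott manifold). For each $k \in \NN$, applying Carr's construction (boundary-connected-sum with $k$ copies of $B$ minus a disk, followed by a Gromov--Lawson surgery that installs a PSC metric) locally inside $D$ produces a uniformly PSC metric $g_k \in \frakR_\infty^+(M,g)$, with $g_0 = g$ and all $g_k$ agreeing outside $D$, together with a $4m$-dimensional spin PSC-cobordism $W_k$ from $(M, g_0)$ to $(M, g_k)$ that is cylindrical outside a compact set, product near the boundary, and satisfies $\int_{W_k} \hatA = k \hatA(B)$.

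The plan is then to compute the relative eta invariant $\eta(\slaD_{\slaS_l}, \slaD_{\slaS_k})$ in two different ways and derive a contradiction. First, cutting $M$ along $\partial D$ and invoking the splitting formula Corollary~\ref{C:splitting} (applicable because the spin Dirac operator on a uniformly PSC manifold has empty essential support) expresses $\eta(\slaD_{\slaS_l}, \slaD_{\slaS_k})$ as a difference of APS eta invariants on the compact ball $D$. Doubling $D$ across $\partial D$ and using the standard gluing formula for eta invariants on closed manifolds relates this quantity to the difference of eta invariants of Carr's PSC metrics on the closed sphere $S^{4m-1}$, which by Carr's original calculation equals $c_1(l-k)\hatA(B)$ for a specific nonzero constant $c_1$ determined by the doubling. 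Second, supposing for contradiction that $g_k$ and $g_l$ are PSC-isotopic with $k \ne l$, Remark~\ref{R:equiv rela psc-2} yields a PSC-concordance $(V, g_V)$ between them. Gluing $W_k \cup V \cup (-W_l)$ along matching boundaries produces a spin PSC-cobordism $\widehat W$ from $(M, g_0)$ to itself. Applying Theorem~\ref{T:cobor rel-eta} to $\widehat W$ (using that $g_0$ is trivially PSC-isotopic to itself and $\eta(\slaD_{\slaS_0}, \slaD_{\slaS_0}) = 0$ by \eqref{E:rel-eta prop}) yields $\int_V \hatA = (l-k)\hatA(B)$; then applying Theorem~\ref{T:cobor rel-eta} directly to $V$ (now permissible under the isotopy hypothesis) gives $\eta(\slaD_{\slaS_l}, \slaD_{\slaS_k}) = 2(k-l)\hatA(B)$, i.e., the expression $c_2(l-k)\hatA(B)$ with $c_2 = -2$.

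Since $c_1 \ne c_2$, comparing the two expressions forces $(c_1 - c_2)(l-k)\hatA(B) = 0$ and hence $k = l$, the desired contradiction; it follows that $\{g_k\}_{k \in \NN}$ represents infinitely many path components of $\frakR_\infty^+(M, g)$. The main obstacle I anticipate is executing Carr's surgery construction in a fashion genuinely localized to the ball $D \subset M$, so that uniform positivity of the scalar curvature and the coincidence of metrics at infinity are preserved throughout, together with the careful bookkeeping of sign and normalization conventions through the APS doubling formula and Theorem~\ref{T:cobor rel-eta} that is needed to confirm $c_1 \ne c_2$. Once these points are settled, the remainder of the argument is elementary linear algebra on the two equations derived above.
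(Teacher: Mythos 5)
Your overall framework (Carr-type manifolds $Y_k$ with $\int_{Y_k}\hatA=k\hatA(B)\ne 0$, glued into $M$ to produce the candidate metrics $g_k$) matches the paper's starting point, but the mechanism you propose for the contradiction is broken. You plan to compute $\eta(\slaD_{\slaS_l},\slaD_{\slaS_k})$ twice and obtain two different multiples $c_1(l-k)\hatA(B)\ne c_2(l-k)\hatA(B)$. The relative eta invariant is, however, a single well-defined number (Definition~\ref{D:rel-eta}), and your first computation is unconditional; so if both computations are valid derivations they must return the same value, and the ``contradiction'' can only be a sign error. Indeed one can check $c_1=c_2=-2$: since $g_k=g\#\gamma_k$ is exactly of the connected-sum form covered by Theorem~\ref{T:cobor conn-sum} (with $N=S^{4m-1}$, $N'=M$), that theorem applies \emph{without any isotopy hypothesis} to the PSC-cobordism $(-W_k)\cup_{(M,g_0)}W_l$ and yields $\eta(\slaD_{\slaS_l},\slaD_{\slaS_k})=-2(l-k)\hatA(B)$ outright — the same value your second, conditional computation produces. (Your appeal to ``Carr's original calculation'' of the eta invariants of $\gamma_k$ refers to a computation that does not exist in \cite{Carr88}; Carr's argument is purely index-theoretic. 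Carrying out that eta computation via APS on $Y_k$, with the orientation conventions fixed by the paper's statement that $-\slaD_{\slaS_1}$ is the boundary restriction at the outgoing end, again gives $-2$.) This is consistent with Remark~\ref{R:non-iso 4m-1}: the $g_k$ are all PSC-cobordant, and the untwisted relative eta invariant is fully accounted for by $\int_W\hatA$ over any such cobordism or concordance, so it cannot obstruct PSC-isotopy here. (This is precisely why the twisted/rho-invariant arguments of Section~\ref{S:psc conn-sum} work only for virtual representations of dimension $0$, where the local term cancels.)

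The paper's proof takes a genuinely different route that avoids eta invariants entirely. Assuming $g\#\gamma_k$ and $g\#\gamma_{k'}$ are PSC-isotopic, it glues $Z_k=(M\times[0,1])\natural Y_k$ and $Z_{k'}$ along \emph{both} pairs of ends — the two copies of $(M,g)$ directly, and the other two ends through a PSC-concordance — so that the interval closes up into a circle and one obtains a uniformly PSC metric on the closed-up manifold $(M\times S^1)\#X_{k,k'}$ with $X_{k,k'}=Y_k\cup_{S^{4m-1}}Y_{k'}$. Gromov--Lawson's relative index theorem then forces $0=\ind\slaD^+=\hat{\mathrm{A}}(X_{k,k'})\ne0$. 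The essential move you are missing is this closing-up of the cylinder into $M\times S^1$, which converts the quantity $\hat{\mathrm{A}}(X_{k,k'})$ into an honest (relative) index rather than a boundary correction that, as your two equations show, gets absorbed consistently on both sides.
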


For the convenience of the reader, we give a sketch of the above-mentioned argument here.

By a plumbing technique, Carr was able to construct a compact $4m$-manifold $Y_k$ for each $k\in\NN$ with $\p Y_k=S^{4m-1}$, such that $Y_k$ admits a PSC metric which is a product near the boundary. Moreover, let $X_{k,k'}=Y_k\cup_{S^{4m-1}}Y_{k'}$. Then the $\hat{A}$-genus $\hat{\mathrm{A}}(X_{k,k'})\ne0$ for $k\ne k'$. If $\gamma_k$ (resp. $\gamma_{k'}$) denotes the induced PSC metric on $S^{4m-1}=\p Y_k$ (resp. $\p Y_{k'}$), then one can show that $g\#\gamma_k$ and $g\#\gamma_{k'}$ belong to different path components of $\frakR_\infty^+(M,g)$ for $k\ne k'$.

Indeed, let $Z_k=(M\times[0,1])\natural Y_k$, where $\natural$ denotes boundary connected sum. Then the metric $g$ on $M$ and $g\#\gamma_k$ on $M\#S^{4m-1}\cong M$ extends to a PSC metric on $Z_k$ with product structure near the boundary. Let $Z_{k'}$ be built analogously. Clearly, $Z_k$ and $Z_{k'}$ can be glued along the ends $M\times\{0\}$. For $k\ne k'$, if $g\#\gamma_k$ and $g\#\gamma_{k'}$ are PSC-isotopic, then one can join the other two ends of $Z_k$ and $Z_{k'}$ by a PSC-concordance between $g\#\gamma_k$ and $g\#\gamma_{k'}$. In this way we get a uniformly PSC metric on $(M\times S^1)\#X_{k,k'}$. By Gromov--Lawson's relative index theorem,
\[
0=\ind\slaD^+_{\slaS_{(M\times S^1)\#X_{k,k'}}}=\hat{\mathrm{A}}(X_{k,k'})\ne0,
\]
which is a contradiction. Hence $\frakR_\infty^+(M,g)$ has infinitely many path components.

\begin{remark}\label{R:non-iso 4m-1}
Note that although these metrics are non-PSC-isotopic, they are actually PSC-cobordant.
\end{remark}

\subsection{The relative rho invariant}\label{SS:rel-rho}

There is another method from index theory in studying the topology of the space of PSC metrics. It uses eta invariants with coefficients in unitary flat bundles induced by representations of the fundamental group. They are sometimes called rho invariants.

Let $N$ be a closed spin manifold admitting a PSC metric. Suppose $N$ has a non-trivial fundamental group $\pi=\pi_1(N)$. Let $\lambda$ be a unitary representation of $\pi$. Then $\lambda$ defines a unitary flat bundle $F_\lambda^N:=(\widetilde{N}\times\CC^l)/\Gamma$ over $N$, where $\widetilde{N}$ is the universal cover of $N$, and $\Gamma$ is the action of $\pi$ given by
\[
\alpha\cdot(\tilde{x},v)=(\alpha\tilde{x},\lambda(\alpha)v),\qquad\forall\alpha\in\pi,\;\tilde{x}\in\widetilde{N},\;v\in\CC^l.
\]
We call $\lambda$ a virtual unitary representation of virtual dimension 0, if $\lambda$ is a formal difference of two finite dimensional unitary representations $\lambda^+$ and $\lambda^-$ of $\pi$ with $\dim\lambda^+=\dim\lambda^-$. Let $R_0(\pi)$ denote the set of virtual unitary representations of virtual dimension 0. For $\lambda=\lambda^+-\lambda^-\in R_0(\pi)$, the \emph{rho invariant} associated to $\lambda$ is
\[
\rho(\slaD_{\slaS_{N}})(\lambda):=\eta(\slaD_{\slaS_{N}\otimes F_{\lambda^+}^N})-\eta(\slaD_{\slaS_{N}\otimes F_{\lambda^-}^N}).
\]

Now consider the connected sum $M=N\#N'$, where $N'$ is a non-compact spin manifold admitting a uniformly PSC metric $h'$ of bounded geometry. Under a canonical projection $N\#N'\to N$, the pullback of $F_{\lambda^\pm}^N$, denoted by $F_{\lambda^\pm}$, are two unitary flat bundles over $M$. It is clear that $F_{\lambda^\pm}$ are trivial bundles over $\mathring{N}'$.
Let $g_0=h_0\#h'$ and $g_1=h_1\#h'$ be two uniformly PSC metrics of bounded geometry on $M$. Let $\slaD_{\slaS_0\otimes F_{\lambda^\pm}}$ and $\slaD_{\slaS_1\otimes F_{\lambda^\pm}}$ be the twisted spin Dirac operators. Then they are Dirac--Schr\"odinger operators which coincide at infinity, and with empty essential supports.

\begin{definition}\label{D:rel-rho}
Under the above setting, the \emph{relative rho invariant} associated to $(g_0,g_1)$ and $\lambda=\lambda^+-\lambda^-\in R_0(\pi)$ is defined to be
\[
\rho(\slaD_{\slaS_1},\slaD_{\slaS_0})(\lambda):=\eta(\slaD_{\slaS_1\otimes F_{\lambda^+}},\slaD_{\slaS_0\otimes F_{\lambda^+}})-\eta(\slaD_{\slaS_1\otimes F_{\lambda^-}},\slaD_{\slaS_0\otimes F_{\lambda^-}}).
\]
\end{definition}

Let $(W,g_W)$ be a PSC-cobordism between $g_0$ and $g_1$. If any $F_\lambda$ can be extended to a unitary flat bundle over $W$, then we call $g_0$ and $g_1$ \emph{$\pi_1$-PSC-cobordant}. This is satisfied for example if $W$ admits a $\pi_1(M)$-covering whose boundary is the union of the universal coverings of the two boundary components.
The following is an immediate consequence of Corollary~\ref{C:cobor twisted} by applying \eqref{E:cobor twisted} twice and taking difference.

\begin{proposition}\label{P:rel-rho cobor}
If $g_0$ and $g_1$ are $\pi_1$-PSC-cobordant, then $\rho(\slaD_{\slaS_1},\slaD_{\slaS_0})(\lambda)=0$ for any $\lambda\in R_0(\pi)$. In particular, if $g_0$ and $g_1$ are PSC-concordant, then $\rho(\slaD_{\slaS_1},\slaD_{\slaS_0})(\lambda)\\ =0$.
\end{proposition}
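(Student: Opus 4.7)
The plan is to apply Corollary~\ref{C:cobor twisted} twice, once with the unitary flat bundle $F_{\lambda^+}$ and once with $F_{\lambda^-}$, and then subtract. Suppose $(W,g_W)$ is a $\pi_1$-PSC-cobordism between $g_0$ and $g_1$. By hypothesis, both $F_{\lambda^+}$ and $F_{\lambda^-}$ extend to unitary flat bundles $F_{W,\lambda^\pm}$ over $W$, and then obviously to $F_{\widetilde{W},\lambda^\pm}$ on the elongation $(\widetilde{W},\tilde g)$. Since $(W,g_W)$ is a PSC-cobordism, the ``in particular'' part of Corollary~\ref{C:cobor twisted} applies, yielding for each sign
\[
\int_W \hatA(W,g_W)\cdot\rank(F_{\widetilde{W},\lambda^\pm}) + \tfrac{1}{2}\,\eta(\slaD_{\slaS_1\otimes F_{\lambda^\pm}},\slaD_{\slaS_0\otimes F_{\lambda^\pm}}) = 0.
\]

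Subtracting these two identities, the topological term collapses because $\rank(F_{\widetilde{W},\lambda^+}) = \dim\lambda^+ = \dim\lambda^- = \rank(F_{\widetilde{W},\lambda^-})$, so the $\hatA$-integrals cancel exactly. What remains is
\[
\tfrac{1}{2}\bigl[\eta(\slaD_{\slaS_1\otimes F_{\lambda^+}},\slaD_{\slaS_0\otimes F_{\lambda^+}}) - \eta(\slaD_{\slaS_1\otimes F_{\lambda^-}},\slaD_{\slaS_0\otimes F_{\lambda^-}})\bigr] = 0,
\]
which, by Definition~\ref{D:rel-rho}, is precisely $\tfrac{1}{2}\rho(\slaD_{\slaS_1},\slaD_{\slaS_0})(\lambda)=0$, proving the first statement.

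For the ``in particular'' clause, it suffices to observe that a PSC-concordance is a special case of a $\pi_1$-PSC-cobordism. Indeed, if $g_0$ and $g_1$ are PSC-concordant then $W = M\times[0,a]$ with a uniformly PSC product-at-infinity metric, and any unitary flat bundle $F$ over $M$ extends canonically to the pull-back bundle $\mathrm{pr}_M^\ast F$ over $W$ via the projection $\mathrm{pr}_M:M\times[0,a]\to M$, which remains unitary and flat. Applying this to $F_{\lambda^\pm}$ verifies the $\pi_1$-PSC-cobordism hypothesis, so the first part gives the conclusion.

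No real obstacle is expected: the proof is essentially a bookkeeping exercise that exploits the virtual-dimension-zero condition to kill the local index contribution. The only point requiring a small amount of care is confirming that Corollary~\ref{C:cobor twisted} genuinely applies with the \emph{twisted} spin Dirac operators in this non-compact connected sum setting, but this is already guaranteed by the corollary's hypotheses together with the fact that flatness of $F_{\lambda^\pm}$ ensures the Lichnerowicz-type term $\tfrac{\kappa}{4}+\calR^{F_{\lambda^\pm}}$ remains uniformly positive at infinity, so $\slaD_{\slaS_j\otimes F_{\lambda^\pm}}$ are Dirac--Schr\"odinger operators and the relative eta invariants in Definition~\ref{D:rel-rho} are well defined.
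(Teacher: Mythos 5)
Your proposal is correct and follows exactly the route the paper intends: the paper dismisses this proposition as "an immediate consequence of Corollary~\ref{C:cobor twisted}," and your argument is precisely the fleshed-out version — apply the corollary to $F_{\lambda^+}$ and $F_{\lambda^-}$, subtract, and use $\dim\lambda^+=\dim\lambda^-$ to cancel the $\hatA$-integrals, with the PSC-concordance case reduced to a $\pi_1$-PSC-cobordism via the pull-back extension of the flat bundles.
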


Let $N_0$ denote $(N,h_0)$ and $N_1$ denote $(N,h_1)$. On the closed manifold $N$, the relative rho invariant now corresponds to the difference of the two individual rho invariants
\[
\begin{aligned}
\rho(\slaD_{\slaS_{N_1}})(\lambda)-\rho(\slaD_{\slaS_{N_0}})(\lambda).
\end{aligned}
\]
It turns out that when taking connected sum with a fixed manifold (with a fixed uniformly PSC metric of bounded geometry), the relative rho invariant is unchanged. Namely, we have

\begin{proposition}\label{P:rel-rho conn-sum}
$\rho(\slaD_{\slaS_1},\slaD_{\slaS_0})(\lambda)=\rho(\slaD_{\slaS_{N_1}})(\lambda)-\rho(\slaD_{\slaS_{N_0}})(\lambda)$.
\end{proposition}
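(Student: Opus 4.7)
The plan is to apply the splitting formula (Corollary~\ref{C:splitting}) to reduce the relative eta invariant on $M$ to APS eta invariants on the compact piece $\mathring{N}$, and then to express the closed-manifold rho invariants on $N$ in terms of the same APS quantities via a classical gluing formula, noting that the cap contribution from the disk $D^n$ drops out of the $\rho$-combination because $F_\lambda|_{D^n}$ is trivial.

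By the construction of $h_j\#h'$, the metrics $g_0$ and $g_1$ are isometric product metrics on a collar of some sphere $S$ lying in the neck $S\times I$ of $M=\mathring{N}\cup_{S\times I}\mathring{N}'$, and they coincide on $\mathring{N}'$. The pullback flat bundle $F_\lambda=\pi^*F_\lambda^N$ agrees with $F_\lambda^N|_{\mathring{N}}$ on $\mathring{N}$ and is trivial on the neck and on $\mathring{N}'$. Applying Corollary~\ref{C:splitting} to $(\slaD_{\slaS_0\otimes F_{\lambda^\pm}},\slaD_{\slaS_1\otimes F_{\lambda^\pm}})$ with cut $\Sigma=S$, compact piece $M_j'=\mathring{N}$, and common non-compact piece $\mathring{N}'$, and choosing the same Lagrangian $L^\pm\subset\ker(\calB\otimes F_{\lambda^\pm}|_S)$ on both sides (which makes sense because $g_0$ and $g_1$ induce identical boundary operators on $S$), I obtain
\[
\eta(\slaD_{\slaS_1\otimes F_{\lambda^\pm}},\slaD_{\slaS_0\otimes F_{\lambda^\pm}})=\eta(\slaD^{\mathring{N}}_{\slaS_1\otimes F_{\lambda^\pm},\aps})-\eta(\slaD^{\mathring{N}}_{\slaS_0\otimes F_{\lambda^\pm},\aps}).
\]

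Next, I cap off $\mathring{N}$ by gluing a standard disk $(D^n,g_{\rm cap})$ with a fixed PSC metric of product form near $\partial D^n=S$ matching the collar, producing a closed PSC manifold $(N,\tilde h_j)$ whose restriction to $\mathring{N}$ coincides with the restriction of $g_j$. The classical Br\"uning--Lesch gluing formula for the eta invariant on closed spin manifolds with product structure near the cut gives
\[
\eta(\slaD_{\slaS_{(N,\tilde h_j)}\otimes F_{\lambda^\pm}^N})=\eta(\slaD^{\mathring{N}}_{\slaS_j\otimes F_{\lambda^\pm},\aps})+\eta(\slaD^{D^n}_{F_{\lambda^\pm}^N|_{D^n},\aps}),
\]
with matching Lagrangians on the two sides. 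Since $D^n$ is simply connected, $F_{\lambda^\pm}^N|_{D^n}$ is a trivial Hermitian bundle of rank $l=\dim\lambda^+=\dim\lambda^-$, so the two operators on $D^n$ are unitarily equivalent ($l$ copies of the untwisted operator) and have equal APS eta invariants. Subtracting the two ($\pm$) equations and combining with the previous step yields
\[
\rho(\slaD_{\slaS_1},\slaD_{\slaS_0})(\lambda)=\rho(\slaD_{\slaS_{(N,\tilde h_1)}})(\lambda)-\rho(\slaD_{\slaS_{(N,\tilde h_0)}})(\lambda).
\]

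Finally, $\tilde h_j$ and $h_j$ are PSC metrics on the closed $N$ that agree outside the ball removed to form the connected sum. By the Gromov--Lawson surgery construction (the trace of a trivial surgery), they are connected by a PSC-cobordism over which $F_\lambda^N$ extends; the closed-manifold analogue of Proposition~\ref{P:rel-rho cobor} then gives $\rho(\slaD_{\slaS_{(N,\tilde h_j)}})(\lambda)=\rho(\slaD_{\slaS_{N_j}})(\lambda)$, proving the proposition. The main delicate point is the closed-manifold gluing step: although the Br\"uning--Lesch formula carries kernel-correction terms tied to the choice of Lagrangian, the triviality of $F_\lambda|_{D^n}$ ensures these corrections are $\pm$-independent and cancel cleanly in the $\rho$-combination.
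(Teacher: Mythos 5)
Your argument is correct, and its skeleton coincides with the paper's: split the relative eta invariant along the neck sphere via Corollary~\ref{C:splitting}, close up the compact piece $\mathring{N}$ to land on closed-manifold rho invariants, and then identify those with $\rho(\slaD_{\slaS_{N_j}})(\lambda)$. The one genuine divergence is the choice of cap. The paper caps $\mathring{N}_j$ with $-\mathring{N}_0$, producing the closed connected sums $N_j\#N_0$, and then must invoke the classical fact (Gajer, Carr) that $N_j\#N_0$ is $\pi_1$-PSC-cobordant to $N_j\sqcup N_0$, so that $\rho(\slaD_{\slaS_{N_j\#N_0}})(\lambda)=\rho(\slaD_{\slaS_{N_j}})(\lambda)$ because the pulled-back bundle is trivial on the second summand. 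You instead cap with a PSC (torpedo-type) disk, landing directly on $(N,\tilde h_j)$, and only need the standard isotopy $\tilde h_j\simeq h_j$ in $\frakR^+(N)$; this trades the $N\#N\sim N\sqcup N$ bordism fact for the existence of a product-collared PSC metric on $D^n$ and for exact integer control in the closed-manifold gluing step. On that last point your proof is actually in better shape than you suggest: since both $g_j|_{\mathring N}$ and the cap metric have positive scalar curvature and the tangential operator on the round $S^{n-1}(\varepsilon)$ (twisted by a trivial flat bundle) is invertible, all kernels vanish and the spectral flow along the path of boundary conditions vanishes by the computation of Lemma~\ref{L:invertibility}, so the Br\"uning--Lesch correction terms are zero outright rather than merely $\pm$-independent, and there are no Lagrangians to choose. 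Two small housekeeping remarks: the reduction to metrics that are honestly product near the neck is where the paper formally invokes Proposition~\ref{P:rel-rho cobor} (you get it from the construction of $h_j\#h'$, which is acceptable since the gluing construction already produces product necks); and your final capping step should note that the flat bundle extends over the concordance $N\times[0,1]$ simply because it is a product, so the closed-manifold analogue of Proposition~\ref{P:rel-rho cobor} (i.e., the APS index theorem applied to the concordance) applies.
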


\begin{proof}
The bundles $F_{\lambda^\pm}^N$ can be pulled back to produce flat bundles $F_{\lambda^\pm}^{N\#N}$ over the connected sum $N\#N$. From the way $g_0$ and $g_1$ are constructed as connected sums, one can choose $g_0'$ PSC-isotopy to $g_0$ and $g_1'$ PSC-isotopy to $g_1$, such that $g_0'$ and $g_1'$ coincide on $\mathring{N}'$, and they are product metrics on $S\times I$ in view of \eqref{E:conn-sum}.
By Proposition~\ref{P:rel-rho cobor} and \eqref{E:rel-eta prop}, the relative rho invariant is unchanged when replacing $g_0$ and $g_1$ by $g_0'$ and $g_1'$. So we can just assume that $g_0$ and $g_1$ satisfy the properties of $g_0'$ and $g_1'$.
Using the gluing formula of the relative eta invariant (Corollary~\ref{C:splitting}, see Remark~\ref{R:bounded geom}),
\[
\eta(\slaD_{\slaS_1\otimes F_{\lambda^\pm}},\slaD_{\slaS_0\otimes F_{\lambda^\pm}})=\eta(\slaD_{\slaS_{N_1\#N_0}\otimes F_{\lambda^\pm}^{N\#N}})-\eta(\slaD_{\slaS_{N_0\#N_0}\otimes F_{\lambda^\pm}^{N\#N}}).
\]
Hence
\begin{equation}\label{E:rel-rho conn-sum-1}
\rho(\slaD_{\slaS_1},\slaD_{\slaS_0})(\lambda)=\rho(\slaD_{\slaS_{N_1\#N_0}})(\lambda)-\rho(\slaD_{\slaS_{N_0\#N_0}})(\lambda).
\end{equation}

On the other hand, the right-hand side of \eqref{E:rel-rho conn-sum-1} involves only compact manifolds. In this case it is known that $N_0\#N_0$ (resp. $N_1\#N_0$) is $\pi_1$-PSC-cobordant to the disjoint union $N_0\sqcup N_0$ (resp. $N_1\sqcup N_0$) (cf. Gajer \cite{Gajer87}, Carr \cite{Carr88}). It can be deduced by applying the APS index theorem to the PSC-cobordism between $N_0\#N_0$ and $N_0\sqcup N_0$ (twisted by the flat bundles corresponding to $\lambda^\pm$) that
\[
\eta(\slaD_{\slaS_{N_0\#N_0}\otimes F_{\lambda^+}^{N\#N}})-\eta(\slaD_{\slaS_{N_0\#N_0}\otimes F_{\lambda^-}^{N\#N}})=2\eta(\slaD_{\slaS_{N_0}\otimes F_{\lambda^+}^N})-2\eta(\slaD_{\slaS_{N_0}\otimes F_{\lambda^-}^N}),
\]
that is,
\begin{equation}\label{E:rel-rho conn-sum-2}
\rho(\slaD_{\slaS_{N_0\#N_0}})(\lambda)=2\rho(\slaD_{\slaS_{N_0}})(\lambda).
\end{equation}
Similarly,
\[
\begin{aligned}
\eta(\slaD_{\slaS_{N_1\#N_0}\otimes F_{\lambda^+}^{N\#N}})-\eta(\slaD_{\slaS_{N_1\#N_0}\otimes F_{\lambda^-}^{N\#N}})& =\eta(\slaD_{\slaS_{N_1}\otimes F_{\lambda^+}^N})-\eta(\slaD_{\slaS_{N_1}\otimes F_{\lambda^-}^N}) \\
& \quad+\eta(\slaD_{\slaS_{N_0}\otimes F_{\lambda^+}^N})-\eta(\slaD_{\slaS_{N_0}\otimes F_{\lambda^-}^N}),
\end{aligned}
\]
that is,
\begin{equation}\label{E:rel-rho conn-sum-3}
\rho(\slaD_{\slaS_{N_1\#N_0}})(\lambda)=\rho(\slaD_{\slaS_{N_1}})(\lambda)+\rho(\slaD_{\slaS_{N_0}})(\lambda).
\end{equation}
The proposition then follows from \eqref{E:rel-rho conn-sum-1}, \eqref{E:rel-rho conn-sum-2} and \eqref{E:rel-rho conn-sum-3}.
\end{proof}

\subsection{Space of PSC metrics on connected sums: Odd-dimensional case}\label{SS:PSC conn-sum odd}

On closed manifolds, the relative rho invariant is just the difference of two individual rho invariants. And Proposition~\ref{P:rel-rho cobor} holds obviously by the classical APS index theorem. Therefore, the rho invariant can be used to distinguish non-PSC-cobordant metrics on \emph{odd-dimensional} spin manifolds (cf. Remark~\ref{R:rel eta vanish}). One of the earliest results in this direction is due to Botvinnik and Gilkey \cite{BotviGilkey95eta-psc}. On manifolds satisfying certain conditions, by constructing a countable family of PSC metrics with distinct rho invariant values, they are able to prove the following stronger result than that of Proposition~\ref{P:non-iso 4m-1}.

\begin{theorem}[Botvinnik--Gilkey \cite{BotviGilkey95eta-psc}]\label{T:non-cobor cpt}
Let $N$ be a closed spin manifold of odd dimension $n\ge5$ with non-trivial finite fundamental group $\pi$, admitting a metric of positive scalar curvature. When $n\equiv1$ mod 4, assume also that $\pi$ has a non-zero virtual unitary representation $\lambda$ of virtual dimension 0 such that $\Tr\lambda(\alpha)=-\Tr\lambda(\alpha^{-1})$ for all $\alpha\in\pi$. Then $N$ admits infinite number of PSC metrics which are non-$\pi_1$-PSC-cobordant. In particular, $\pi_0(\frakR^+(N))$ is infinite, where $\frakR^+(N)$ denotes the space of PSC metrics on $N$.
\end{theorem}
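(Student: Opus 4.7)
The plan is to produce, for each $k \in \NN$, a PSC metric $h_k$ on $N$ such that the rho invariants $\rho(\slaD_{\slaS_{N,h_k}})(\lambda) \in \RR$ are pairwise distinct. First I would verify that on closed odd-dimensional manifolds the rho invariant is a $\pi_1$-PSC-cobordism invariant: applying the classical APS index theorem to a $\pi_1$-PSC-cobordism twisted by the extensions of $F_{\lambda^\pm}^N$, the boundary Dirac operators are invertible (by the Lichnerowicz formula and uniform PSC), while the $\hat{A}$-density contributions cancel for $\lambda=\lambda^+-\lambda^-$ since $\dim\lambda^+=\dim\lambda^-$. Thus the rho invariant is constant on $\pi_1$-PSC-cobordism classes, and distinct values of $\rho$ imply pairwise non-$\pi_1$-PSC-cobordance.

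Step 1 (building blocks). Using Carr's plumbing construction \cite{Carr88}, produce for each $k\in\NN$ a compact spin manifold $Y_k^{n+1}$ with $\partial Y_k\cong S^n$ carrying a PSC metric which is a product near the boundary, such that the $\hat{A}$-genera of the closed manifolds $X_{k,k'}:=Y_k\cup_{S^n}(-Y_{k'})$ are non-zero and unbounded. Let $\gamma_k$ denote the induced boundary metric on $S^n$ and set $h_k:=h\#\gamma_k$ via the Gromov--Lawson connected sum, performed inside a small disk in $N$. Since the disk is simply-connected, neither $\pi$ nor the flat bundles $F_{\lambda^\pm}^N$ (trivialised on contractible sets) are altered outside the local region.

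Step 2 (comparison of rho invariants). For each $k$ form the compact cobordism $Z_k:=(N\times[0,1])\natural Y_k$ between $(N,h)$ and $(N,h_k)$, with the boundary connected sum taken inside the disk from Step~1. The flat bundles $F_{\lambda^\pm}^N$ extend to flat bundles $F_{\lambda^\pm}^{Z_k}$ over $Z_k$. Applying the compact APS theorem to $\slaD^+_{\slaS_{Z_k}\otimes F_{\lambda^\pm}^{Z_k}}$ and taking the difference for $\lambda^+$ and $\lambda^-$, the $\hat{A}$-density contributions cancel, yielding
\[
\rho(\slaD_{\slaS_{N,h_k}})(\lambda)-\rho(\slaD_{\slaS_{N,h}})(\lambda) = 2\bigl(\ind\slaD^+_{\slaS_{Z_k}\otimes F_{\lambda^+}^{Z_k},\aps} - \ind\slaD^+_{\slaS_{Z_k}\otimes F_{\lambda^-}^{Z_k},\aps}\bigr).
\]
A relative index/excision argument identifies the right-hand side with a topological invariant of the closed double of $Y_k$ twisted by $\lambda$. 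Step 3 (distinctness): model the $Y_k$ on spherical space forms $S^n/\Gamma$ where $\Gamma$ embeds in $\pi$ (or whose PSC metrics bound spin manifolds with known $\hat{A}$-data), so that the equivariant APS formula gives an explicit closed-form expression for the values in Step~2 in terms of characters of $\lambda$. Choose the sequence so these explicit values diverge; the metrics $h_k$ are then pairwise non-$\pi_1$-PSC-cobordant.

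The main obstacle is precisely the dimension-$1\bmod 4$ case. There the standard quaternionic symmetry of $\slaS_N$ anti-commutes with $\slaD_{\slaS_N\otimes F_{\lambda^\pm}^N}$ for self-conjugate $\lambda^\pm$, forcing the twisted eta invariants (and hence the naive rho invariant) to vanish identically and rendering the argument vacuous. The hypothesis $\Tr\lambda(a)=-\Tr\lambda(a^{-1})$ is exactly the representation-theoretic condition breaking this involution's compatibility with the twisting, ensuring non-triviality. The delicate work is then producing, for an arbitrary finite $\pi$ meeting the hypothesis, spherical-space-form models for which the explicit character computation yields an unbounded sequence of rho-invariant differences; this representation-theoretic input is where the classical Botvinnik--Gilkey analysis does the real labour beyond the index-theoretic formalism set up above.
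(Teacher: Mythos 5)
First, note that the paper does not prove this statement --- it is quoted verbatim from Botvinnik--Gilkey \cite{BotviGilkey95eta-psc} and used as a black box (only the \emph{output} of their proof, namely metrics $h_i$ with pairwise distinct rho invariants, is invoked later). So your proposal has to be measured against the actual Botvinnik--Gilkey argument, and there it has a fatal structural flaw.

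The flaw is in Steps 1--2. Carr's blocks $Y_k$ are simply connected (plumbings of disk bundles over spheres), and you attach them to $N$ along a disk. Consequently $Z_k=(N\times[0,1])\natural Y_k$ is a PSC-cobordism from $(N,h)$ to $(N,h_k)$ over which the flat bundles $F_{\lambda^\pm}^N$ extend (since $\pi_1(Z_k)\cong\pi$), and gluing $-Z_k\cup_N Z_{k'}$ exhibits $h_k$ and $h_{k'}$ as \emph{$\pi_1$-PSC-cobordant for all $k,k'$}. Your own Step 2 then kills the argument: on $Z_k$ the metric is PSC and product near the boundary, so both APS indices $\ind\slaD^+_{\slaS_{Z_k}\otimes F_{\lambda^\pm}^{Z_k},\aps}$ vanish and $\rho(\slaD_{\slaS_{N,h_k}})(\lambda)=\rho(\slaD_{\slaS_{N,h}})(\lambda)$ for every $k$; the ``invariant of the double of $Y_k$ twisted by $\lambda$'' is identically zero because $\lambda$ has virtual dimension $0$ and is trivial on the simply-connected piece. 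The rho invariant is precisely the invariant that \emph{cannot} see Carr's modification (compare Remark~\ref{R:non-iso 4m-1}: Carr's metrics are non-isotopic but PSC-cobordant). Moreover Carr's construction does not even produce the $Y_k$ when $n\equiv1\bmod4$, since the $\hat{A}$-obstruction lives in degrees divisible by $4$. So the proposal at best reproves Proposition~\ref{P:non-iso 4m-1} (non-isotopy, no fundamental group) and cannot yield non-$\pi_1$-PSC-cobordance.

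What the real proof requires, and what is missing from your sketch, is the Gromov--Lawson/Schoen--Yau surgery--bordism mechanism \emph{over $B\pi$}: one takes building blocks $(P_k,g_k)$ that genuinely carry the fundamental group --- lens spaces or other spherical space forms $S^n/\Gamma$ for cyclic $\Gamma\le\pi$, with reference maps to $B\pi$ --- whose classes are torsion in $\Omega_n^{\rm spin}(B\pi)$, so that $N\sqcup P_k$ is spin-bordant to $N$ over $B\pi$ and the PSC metric can be pushed across the bordism onto $N$ by codimension $\ge3$ surgeries (this is where $n\ge5$ enters). This transplantation changes the rho invariant additively by $\rho(P_k,g_k)(\lambda)$, which is computed in closed form by Donnelly's equivariant APS formula in terms of the characters of $\lambda$; the representation-theoretic work of Botvinnik--Gilkey is to show these values form an infinite set, the condition $\Tr\lambda(a)=-\Tr\lambda(a^{-1})$ being exactly what prevents the forced vanishing in dimensions $\equiv1\bmod4$. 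A connected sum with $S^n/\Gamma$ is not a substitute, as it would change $\pi_1(N)$ to $\pi\ast\Gamma$.
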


From the properties of relative rho invariants discussed in Subsection \ref{SS:rel-rho}, we can generalize this theorem to non-compact situation.

\begin{theorem}\label{T:PSC conn-sum odd}
Let $N$ be as in Theorem~\ref{T:non-cobor cpt} and $N'$ be a non-compact spin manifold without boundary of the same dimension. Suppose $N'$ admits a complete uniformly PSC metric $h'$ of bounded geometry. Let $M=N\#N'$. Then there are infinite number of uniformly PSC metrics which are non-$\pi_1$-PSC-cobordant in $\frakR_\infty^+(M,h')$. In particular, $\pi_0(\frakR_\infty^+(M,h'))$ is infinite.
\end{theorem}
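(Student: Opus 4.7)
The plan is to reduce the problem to the closed-manifold case treated by Botvinnik--Gilkey (Theorem~\ref{T:non-cobor cpt}) using the two key properties of the relative rho invariant already established: its vanishing under $\pi_1$-PSC-cobordism (Proposition~\ref{P:rel-rho cobor}) and its reduction to a difference of absolute rho invariants on the closed factor (Proposition~\ref{P:rel-rho conn-sum}).

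First, I would invoke the construction underlying Theorem~\ref{T:non-cobor cpt} to produce a sequence $\{h_k\}_{k\in\NN}$ of PSC metrics on $N$ together with a virtual representation $\lambda\in R_0(\pi)$ satisfying the symmetry condition required when $n\equiv1$ mod $4$, such that the rho invariants
\[
\rho(\slaD_{\slaS_{N_k}})(\lambda),\qquad N_k:=(N,h_k),
\]
take infinitely many pairwise distinct real values; this is precisely how Botvinnik--Gilkey detect non-$\pi_1$-PSC-cobordism in the first place, since on a closed spin manifold rho is a $\pi_1$-PSC-cobordism invariant via the twisted APS index theorem.

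Second, using the Gromov--Lawson surgery procedure recalled at the start of Subsection~\ref{SS:psc conn-sum}, I would form the uniformly PSC metrics of bounded geometry
\[
g_k:=h_k\# h'\in\frakR_\infty^+(M,h'),\qquad k\in\NN.
\]
Proposition~\ref{P:rel-rho conn-sum} then gives, for every $j\neq k$,
\[
\rho(\slaD_{\slaS_k},\slaD_{\slaS_j})(\lambda)=\rho(\slaD_{\slaS_{N_k}})(\lambda)-\rho(\slaD_{\slaS_{N_j}})(\lambda)\neq 0.
\]

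Third, suppose for contradiction that $g_j$ and $g_k$ were $\pi_1$-PSC-cobordant in $\frakR_\infty^+(M,h')$ for some $j\neq k$. Proposition~\ref{P:rel-rho cobor} would force the relative rho invariant $\rho(\slaD_{\slaS_k},\slaD_{\slaS_j})(\lambda)$ to vanish, contradicting the inequality above. Hence the family $\{g_k\}_{k\in\NN}$ consists of pairwise non-$\pi_1$-PSC-cobordant metrics, proving the first assertion. Finally, since PSC-isotopic metrics are PSC-concordant by Remark~\ref{R:equiv rela psc-2}, and since PSC-concordance already forces the relative rho invariant to vanish (again by Proposition~\ref{P:rel-rho cobor}), no two $g_j,g_k$ with $j\neq k$ can lie in the same path component, so $\pi_0(\frakR_\infty^+(M,h'))$ is infinite.

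The only step requiring real care is the extraction of infinitely many \emph{distinct} values of $\rho(\slaD_{\slaS_{N_k}})(\lambda)$ from Theorem~\ref{T:non-cobor cpt}; this is not a new difficulty but a matter of inspecting the Botvinnik--Gilkey construction (e.g.\ their iterated connected sum with carefully chosen lens-space or sphere bundles yields rho values that grow without bound in $k$). Once that input is in hand, every other step is a formal consequence of the two propositions about the relative rho invariant.
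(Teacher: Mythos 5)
Your proposal is correct and follows essentially the same route as the paper: cite the Botvinnik--Gilkey construction for infinitely many PSC metrics $h_i$ on $N$ with pairwise distinct rho invariants, form $g_i=h_i\#h'$, apply Proposition~\ref{P:rel-rho conn-sum} to see $\rho(\slaD_{\slaS_i},\slaD_{\slaS_j})(\lambda)\ne0$ for $i\ne j$, and conclude via Proposition~\ref{P:rel-rho cobor} (together with the implication PSC-isotopy $\Rightarrow$ PSC-concordance from Remark~\ref{R:equiv rela psc-2} for the statement about $\pi_0$). The paper likewise treats the existence of infinitely many distinct rho values as direct input from \cite{BotviGilkey95eta-psc}, so no extra work is needed there.
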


\begin{proof}
By Botvinnik--Gilkey's proof of Theorem~\ref{T:non-cobor cpt}, there exist infinitely many PSC metrics $h_i$ on $N$ and a representation $\lambda\in R_0(\pi)$ such that
\[
\rho(\slaD_{\slaS_{N_i}})(\lambda)\ne\rho(\slaD_{\slaS_{N_j}})(\lambda)
\]
for $i\ne j$. Here $N_i$ represents $N$ endowed with the metric $h_i$. These metrics are consequently non-$\pi_1$-PSC-cobordant in $\frakR^+(N)$. Put $g_i=h_i\# h'$. Then by Proposition~\ref{P:rel-rho conn-sum}, $\rho(\slaD_{\slaS_i},\slaD_{\slaS_j})(\lambda)\ne0$ for $i\ne j$. Hence by Proposition~\ref{P:rel-rho cobor}, $g_i$ and $g_j$ are not $\pi_1$-PSC-cobordant in $\frakR_\infty^+(M,h')$ when $i\ne j$, and the theorem follows.
\end{proof}

\begin{remark}\label{R:PSC conn-sum odd}
At first glance, Theorem~\ref{T:PSC conn-sum odd} may be proved in a simpler way by directly showing that $g_i$ and $g_j$ constructed above being PSC-isotopic (or PSC-cobordant) in $\frakR_\infty^+(M,h')$ implies that $h_i$ and $h_j$ are PSC-isotopic (or PSC-cobordant) in $\frakR^+(N)$. But we point out that this is not easy to do because a metric in a PSC-isotopy between $g_i$ and $g_j$ might not be constructed from a connected sum. Actually, such a metric could be different from $h'$ on a substantial (compact) subset of $N'$.
\end{remark}

\subsection{Space of PSC metrics on connected sums: Even-dimensional case}\label{SS:PSC conn-sum even}

In \cite[Theorem 9.2]{MRS16}, Mrowka, Ruberman, and Saveliev generalize Botvinnik--Gilkey's theorem to even-dimensional manifolds using their index theorem for end-periodic operators. To be precise, consider $N\times S^1$, where $N$ satisfies the conditions in Theorem~\ref{T:non-cobor cpt}. They show that if $h_i$ and $h_j$ are two PSC metrics on $N$ such that $\rho(\slaD_{\slaS_{N_i}})(\lambda)\ne\rho(\slaD_{\slaS_{N_j}})(\lambda)$ for $\lambda\in R_0(\pi)$, then the product metrics $h_i+ds^2$ and $h_j+ds^2$ are non-isotopic in the space of PSC metrics on $N\times S^1$. Combining their idea and our criteria resulted from the index formula, we can extend the result to non-compact case, which is an even-dimensional analogue of Proposition~\ref{P:non-iso 4m-1}.

\begin{theorem}\label{T:PSC conn-sum even}
Let $N$ be as in Theorem~\ref{T:non-cobor cpt}. Let $N'$ and $X$ be two non-compact spin manifolds without boundary such that $\dim N'=\dim N=\dim X-1$. Suppose $N'$ (resp. $X$) admits a complete uniformly PSC metric $h'$ (resp. $\gamma$) of bounded geometry. 
\begin{enumerate}
\item Let $M_1=(N\#N')\times S^1$. Then $\pi_0(\frakR_\infty^+(M_1,h'+ds^2))$ is infinite.

\item Let $M_2=(N\times S^1)\#X$. Then $\pi_0(\frakR_\infty^+(M_2,\gamma))$ is infinite.
\end{enumerate}
\end{theorem}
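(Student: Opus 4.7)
The plan is to reduce both (i) and (ii) to the odd-dimensional setting of Theorem~\ref{T:PSC conn-sum odd}, using the $S^1$-factor present externally in $M_1$ and internally in the summand $N\times S^1$ of $M_2$. First, by Theorem~\ref{T:non-cobor cpt} (Botvinnik--Gilkey), fix a sequence $\{h_i\}_{i\in\NN}$ of PSC metrics on $N$ and a virtual representation $\lambda\in R_0(\pi_1(N))$ such that the values $\rho(\slaD_{\slaS_{N_i}})(\lambda)$, $i\in\NN$, are pairwise distinct. For case (i) set $g_i:=(h_i\#h')+ds^2\in\frakR_\infty^+(M_1,h'+ds^2)$, and for case (ii) set $\gamma_i:=(h_i+ds^2)\#\gamma\in\frakR_\infty^+(M_2,\gamma)$. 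Suppose for contradiction that two of these metrics lie in the same path component; then by Remark~\ref{R:equiv rela psc-2} there is a PSC-concordance on $M_k\times[0,a]$ for some $a>0$.

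The key geometric step is to extract, from this PSC-concordance, a $\pi_1$-PSC-cobordism on the odd-dimensional manifold $M=N\#N'$ (in case (i)) or on an appropriate non-compact connected sum obtained by splitting $M_2$ along a slice $N\times\{*\}$ of the summand $N\times S^1$ (in case (ii)), whose two boundary components carry the metrics $h_i\#h'$ and $h_j\#h'$ (resp.\ their case (ii) analogues). In case (i), one deforms the concordance metric on $(N\#N')\times S^1\times[0,a]$---within its class of PSC-concordances and preserving both the prescribed product boundary values and the behaviour at infinity---to one that is a product $\bar h_t+ds^2$ in the $S^1$-direction near some chosen slice $\{*\}\in S^1$; restricting this deformed metric to $(N\#N')\times\{*\}\times[0,a]$ then gives the desired PSC-cobordism $(V,g_V)$. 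Since $\lambda$ is pulled back from $\pi_1(N)\subset\pi_1(M)$ and $\pi_1(M_1)=\pi_1(M)\times\ZZ$, the flat bundles $F_{\lambda^\pm}$ induced by $\lambda$ extend over $V$; together with Remark~\ref{R:cobor conn-sum}, this places $(V,g_V)$ within the scope of Corollary~\ref{C:cobor twisted}.

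Applying Corollary~\ref{C:cobor twisted} to the elongation of $V$ twisted by $F_{\lambda^+}$ and by $F_{\lambda^-}$ in turn, and taking the virtual difference, the $\hat A$-integrals cancel (because $\rank F_{\lambda^+}=\rank F_{\lambda^-}$) while the $L^2$-indices of the twisted Dirac operators vanish (since flat twisting does not destroy the uniform positivity of $\kappa/4+\calR^F$ in \eqref{E:Lich for-1}). Therefore the relative rho invariant $\rho(\slaD_{\slaS_j},\slaD_{\slaS_i})(\lambda)$ vanishes, and Proposition~\ref{P:rel-rho conn-sum} then forces $\rho(\slaD_{\slaS_{N_i}})(\lambda)=\rho(\slaD_{\slaS_{N_j}})(\lambda)$, contradicting the choice of $\{h_i\}$. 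Case (ii) is handled by the same strategy, using a slice inside the $N\times S^1$ summand chosen disjoint from the connected-sum region, so that the resulting odd-dimensional cobordism has boundary components of the form $(N\#X_0,\,h_i\#h_{X_0})$ and $(N\#X_0,\,h_j\#h_{X_0})$ for a suitable non-compact manifold $X_0$ extracted from $X$.

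The hard part will be the PSC-preserving deformation of the concordance to product form in the $S^1$-direction near the chosen slice. This is the non-compact analogue of the end-periodic construction of Mrowka--Ruberman--Saveliev; it hinges on a parameterized surgery argument à la Gromov--Lawson (available because $\dim N\ge 5$, built into the Botvinnik--Gilkey hypothesis) that modifies the concordance within its homotopy class of PSC metrics so that the slice inherits PSC upon restriction. Once this technical step is secured, the remainder is a direct application of the twisted index machinery of Section~\ref{S:ind psc}.
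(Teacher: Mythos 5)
Your endgame (relative rho invariants, Proposition~\ref{P:rel-rho cobor}, Proposition~\ref{P:rel-rho conn-sum}, Botvinnik--Gilkey) matches the paper, but the step you yourself flag as ``the hard part'' is a genuine gap, and it is not the step that Mrowka--Ruberman--Saveliev actually perform. You propose to take a PSC-concordance on $(N\#N')\times S^1\times[0,a]$ and deform it, preserving PSC, so that it becomes a metric product in the $S^1$-direction near a slice $\{*\}\in S^1$, and then restrict to that slice. This is a codimension-one splitting problem: the slice $(N\#N')\times\{*\}\times[0,a]$ has codimension $1$, whereas the Gromov--Lawson surgery/deformation technique you invoke requires codimension $\ge 3$. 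No such ``make it a product near a codimension-one hypersurface while keeping PSC'' deformation is available, and your argument gives no substitute for it. A second, related problem is that by first passing to a concordance via Remark~\ref{R:equiv rela psc-2} you discard exactly the structure (the one-parameter family of metrics on $M_1$ itself) that the correct argument needs.

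The route the paper takes (following \cite{MRS16}) avoids the slice entirely by unrolling the $S^1$: a PSC-isotopy $\gamma_t$ on $M\times S^1$ between the \emph{product} metrics $g_i+ds^2$ and $g_j+ds^2$ lifts to the infinite cyclic cover $M\times\RR$, and letting the isotopy parameter vary sufficiently slowly with the $\RR$-coordinate produces a uniformly PSC metric on a long piece $W\cong M\times[0,L]$ (the union of $3k+1$ copies of $Z=M\times[0,1]$) which equals the lifted product metric $g_i+ds^2$ near one end and $g_j+ds^2$ near the other. Because the end metrics are products, $W$ is already a PSC-cobordism between $g_i$ and $g_j$ on the odd-dimensional $M=N\#N'$, and Proposition~\ref{P:rel-rho cobor} applies; no product deformation near an interior slice is ever needed. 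For (ii) the same unrolling is done with $Z=(N\times[0,1])\#X$, yielding a $W$ with boundary $N\sqcup N$ carrying $h_i$ and $h_j$; since $X$ is non-compact this $W$ is not literally a PSC-cobordism in the sense of Definition~\ref{D:equiv rela psc}, and the paper repairs this with Bunke's relative index theorem (as in Lemma~\ref{L:concord product}) by trading each non-compact summand $X$ for a compact $N\times S^1$, reducing to the closed case. Your sketch for (ii), which again hinges on splitting along a codimension-one slice $N\times\{*\}$ inside the summand, fails for the same reason as in (i). I recommend replacing the slice-deformation step with the cyclic-cover/slow-variation construction.
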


\begin{proof}
(\romannumeral1) Let $Z=(N\#N')\times[0,1]$. For an integer $k>0$, one can glue $3k+1$ copies of $Z$ to form a manifold
\[
W=\bigcup\limits_{i=-k}^{2k}Z_i,\qquad\mbox{where }Z_i\cong Z.
\]
The way of gluing is to identify the end $(N\#N')\times\{1\}$ of $Z_i$ with the end $(N\#N')\times\{0\}$ of $Z_{i+1}$.
Given two PSC-isotopic metrics $\gamma_0,\gamma_1\in\frakR_\infty^+(M_1,h'+ds^2)$, by the construction in \cite[Proof of Theorem 9.1]{MRS16}, for $k$ large enough, one can obtain a uniformly PSC metric $g_W$ on $W$ such that $g_W=\gamma_0$ on $Z_i$ for $i\le0$ and $g_W=\gamma_1$ on $Z_i$ for $i\ge k$. The copies of $Z$ in the middle are endowed with the metrics that lie in a PSC-isotopy between $\gamma_0$ and $\gamma_1$. One also needs to modify the metrics near the ends of $Z_i$ to make sure they match up.

Let $h_i\in\frakR^+(N)$ be as in the proof of Theorem~\ref{T:PSC conn-sum odd}. Put $\gamma_i=g_i+ds^2$, where $g_i=h_i\#h'$. Then for $i\ne j$, $\gamma_i$ and $\gamma_j$ are not PSC-isotopic. Otherwise, by the fact that they are both product metrics, in this case, the manifold $W$ constructed above can be seen as a PSC-cobordism between $g_i$ and $g_j$, which contradicts the fact in Theorem~\ref{T:PSC conn-sum odd}. The assertion thus follows.

(\romannumeral2) Let $Z=(N\times[0,1])\#X$, where the connected sum is performed in the interior of $N\times[0,1]$. Put $\gamma_i=(h_i+ds^2)\#\gamma$ and $\gamma_j=(h_j+ds^2)\#\gamma$. As in (\romannumeral1), if $\gamma_i$ and $\gamma_j$ are PSC-isotopic in $\frakR_\infty^+(M_2,\gamma)$, then one has a uniformly PSC metric $g_W$ on $W=\cup_{i=-k}^{2k}Z_i$ for some large $k$ such that $g_W$ restricts to $h_i$ and $h_j$ on the two boundary components of $W$ respectively. 

Note that here $W$ is not a PSC-cobordism between $h_i$ and $h_j$ as defined in Definition~\ref{D:equiv rela psc}, because $X$ is non-compact. However, we can use a cut-and-glue trick to replace $X$ by a compact manifold. As a first step, on each copy of $Z$, we fix a common hypersphere where $N\times[0,1]$ and $X$ takes connected sum. Near each copy of the hypersphere, we deform the metric to make it a product. It should be pointed out that on the copies of $Z$ in the middle of $W$, the new metric is not necessarily of uniformly PSC. Now we cut $W$ along each copy of the hypersphere. Since $k$ is finite, this results in cutting $\widetilde{W}$ (the elongation of $W$) along a \emph{closed} hypersurface $\Sigma$ so that
\[
\widetilde{W}=\widetilde{W}'\cup_\Sigma X',
\]
where $X'$ denotes $3k+1$ copies of $X$ with a ball removed. Let $\widetilde{W}_{\rm cpt}$ be $3k+1$ copies of $(N\times S^1)\#(N\times S^1)$ such that the metric on each copy agrees with that on the corresponding copy of $Z$ near the hypersphere. Again cut $\widetilde{W}_{\rm cpt}$ along $\Sigma$ so that
\[
\widetilde{W}_{\rm cpt}=K'\cup_\Sigma K''.
\]
Form two new manifolds
\[
\widetilde{W}_1=\widetilde{W}'\cup_\Sigma K'',\qquad \widetilde{W}_2=K'\cup_\Sigma X'.
\]
By the relative index theorem formulated by Bunke \cite[Theorem~1.14]{Bunke95},
\[
\{\widetilde{W}\}+\{\widetilde{W}_{\rm cpt}\}=\{\widetilde{W}_1\}+\{\widetilde{W}_2\},
\]
where $\{\cdot\}$ denotes the index of the spin Dirac operator on the corresponding space. Note that by the homotopy invariance of the index, the index on $\widetilde{W}$ with the deformed metric is the same as the index with the original metric. It is also clear that the metrics on $\widetilde{W}_{\rm cpt}$ and $\widetilde{W}_2$ can be compactly deformed back to be metrics of uniformly PSC, so $\{\widetilde{W}_{\rm cpt}\}=\{\widetilde{W}_2\}=0$. This reduces the index on $\widetilde{W}$ to that on $\widetilde{W}_1$, which is actually given by the classical compact-version of APS index formula. One can again use the contradiction argument as before. The proof is completed.
\end{proof}

\begin{remark}\label{R:PSC conn-sum}
If $\frakD_\infty(M)$ denotes the group of spin structure preserving diffeomorphisms of $M$ which are identity at infinity and let $\frakM_\infty^+(M,g)=\frakR_\infty^+(M,g)/\frakD_\infty(M)$ be the corresponding \emph{moduli space}. Then the conclusions of Theorems~\ref{T:PSC conn-sum odd} and \ref{T:PSC conn-sum even} hold with $\frakR_\infty^+$ replaced by $\frakM_\infty^+$.
\end{remark}

\subsection{Space of PSC metrics on manifolds with boundary}\label{SS:PSC bdry}

The idea of Subsections \ref{SS:PSC conn-sum odd} and \ref{SS:PSC conn-sum even} can be used to obtain similar conclusions about the space of PSC metrics on the connected sum of a closed manifold and a manifold with boundary. Using the notation as in Remark~\ref{R:conn-sum}, we have the following.

\begin{proposition}\label{P:PSC bdry}
Let $N$ be as in Theorem~\ref{T:non-cobor cpt}. Let $N'$ and $X$ be two compact Riemannian spin manifolds with boundary such that $\dim N'=\dim N=\dim X-1$. Suppose $N'$ (resp. $X$) admits a PSC metric $h'$ (resp. $\gamma$) which is the restriction of a uniformly PSC metric of bounded geometry on a Riemannian spin manifold of the same dimension without boundary. Set
\[
M_0=N\#N',\quad M_1=(N\#N')\times S^1,\quad M_2=(N\times S^1)\#X.
\]
Then, the spaces $\frakR^+_{\pM_0}(M_0,h')$, $\frakR^+_{\pM_1}(M_1,h'+ds^2)$ and $\frakR^+_{\pM_2}(M_2,\gamma)$ all have infinitely many path components.
\end{proposition}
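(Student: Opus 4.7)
The plan is to reduce Proposition~\ref{P:PSC bdry} to the non-compact boundaryless results Theorems~\ref{T:PSC conn-sum odd} and \ref{T:PSC conn-sum even} via an extension-to-infinity procedure. By hypothesis, there exist spin manifolds without boundary $\widehat{N'}\supset N'$ and $\widehat{X}\supset X$ carrying uniformly PSC metrics of bounded geometry whose restrictions are $h'$ and $\gamma$ respectively. Performing the connected sums at interior points of $N'$ (or $X$) yields the non-compact spin manifolds
\[
\hat{M}_0=N\#\widehat{N'},\qquad \hat{M}_1=(N\#\widehat{N'})\times S^1,\qquad \hat{M}_2=(N\times S^1)\#\widehat{X},
\]
each equipped with a reference uniformly PSC metric $\hat{g}_i$ that agrees with the boundary metric $g_{\p M_i}$ in a collar of $\p M_i$. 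These are precisely the manifolds treated in the earlier theorems, and $M_i$ sits inside $\hat{M}_i$ as a codimension-$0$ compact submanifold with boundary.

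Next I would construct an extension-by-gluing map $\frakR^+(M_i;\p M_i,g_{\p M_i})\to\frakR_\infty^+(\hat{M}_i,\hat{g}_i)$. Any $g\in\frakR^+(M_i;\p M_i,g_{\p M_i})$ coincides with a fixed product model near $\p M_i$, and can therefore be concatenated with $\hat{g}_i|_{\hat{M}_i\setminus M_i}$ to give a smooth uniformly PSC metric $\hat{g}\in\frakR_\infty^+(\hat{M}_i,\hat{g}_i)$. The same concatenation applied pointwise to a PSC-isotopy in $\frakR^+(M_i;\p M_i,g_{\p M_i})$ produces a PSC-isotopy in $\frakR_\infty^+(\hat{M}_i,\hat{g}_i)$, so this gluing descends to a well-defined map
\[
\Phi_i\colon\pi_0\big(\frakR^+(M_i;\p M_i,g_{\p M_i})\big)\longrightarrow\pi_0\big(\frakR_\infty^+(\hat{M}_i,\hat{g}_i)\big).
\]
It suffices to show that the image of $\Phi_i$ is infinite. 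The metrics of the form $h_k\#h'$, $(h_k\#h')+ds^2$, or $(h_k+ds^2)\#\gamma$ produced in the proofs of Theorems~\ref{T:PSC conn-sum odd} and \ref{T:PSC conn-sum even} lie in pairwise distinct path components of $\frakR_\infty^+(\hat{M}_i,\hat{g}_i)$, distinguished either by relative rho invariants (for $\hat{M}_0$) or by the long-cobordism argument based on Theorem~\ref{T:cobor conn-sum} (for $\hat{M}_1,\hat{M}_2$). Since in all three constructions the connected sum can be performed at a fixed interior point of $N'$ (or $X$), each such metric is the gluing extension of its own restriction to $M_i$, which lies in $\frakR^+(M_i;\p M_i,g_{\p M_i})$. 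Hence $\Phi_i$ has infinite image on $\pi_0$, and each of the three spaces has infinitely many path components.

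The main obstacle I anticipate is verifying the collar compatibility needed for the gluing step: one must fix $\hat{g}_i$ in product form in a neighbourhood of $\p M_i$ so that any $g\in\frakR^+(M_i;\p M_i,g_{\p M_i})$, which is likewise in product form near the boundary by the definition given in the paper, glues to a $C^\infty$ metric on $\hat{M}_i$ without any bump-function interpolation that could destroy the uniform lower bound on scalar curvature. This is essentially a matter of pinning down conventions on the reference metric $\hat{g}_i$; once settled, the entire argument is a direct transfer of the non-compact results through the extension map $\Phi_i$.
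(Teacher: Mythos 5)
Your proposal is correct and follows essentially the same route as the paper: extend $N'$ (resp.\ $X$) to a boundaryless manifold, realize the distinguished metrics of Theorems~\ref{T:PSC conn-sum odd} and \ref{T:PSC conn-sum even} as gluing extensions of metrics in $\frakR^+(M_i;\pM_i,g_{\pM_i})$, and observe that a PSC-isotopy downstairs would extend by the fixed metric on the complement to one upstairs. One small correction: the definition of $\frakR^+(M_i;\pM_i,g_{\pM_i})$ only requires the metrics to \emph{coincide} near the boundary, not to be of product form there (cf.\ Remark~\ref{R:PSC bdry}); this coincidence is already exactly what makes your concatenation smooth, so no collar-compatibility issue actually arises.
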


\begin{proof}
Assume $h'$ is the restriction of $\tilde{h}'$, where $\tilde{h}'$ is a uniformly PSC metric of bounded geometry on a spin manifold $\widetilde{N}'$ of the same dimension as $N'$ without boundary. We construct $\tilde{g}_i=h_i\#\tilde{h}'\in\frakR^+_\infty(N\#\widetilde{N}',\tilde{h'})$, where $\{h_i\}$ consists of infinitely many non-PSC-isotopic metrics in $\frakR^+(N)$ as in the proof of Theorem~\ref{T:PSC conn-sum odd}. Let $g_i$ denote the restriction of $\tilde{g}_i$ to $M_0$. By the above construction, we can certainly require that each $g_i$ belongs to $\frakR^+_{\pM_0}(M_0,h')$. For $i\ne j$, since $\tilde{g}_i$ and $\tilde{g}_j$ belong to different path components of $\frakR^+_\infty(N\#\widetilde{N}',\tilde{h'})$, it follows easily that $g_i$ and $g_j$ belong to different path components of $\frakR^+_{\pM_0}(M_0,h')$. Therefore $\frakR^+_{\pM_0}(M_0,h')$ has infinitely many path components.

Using Theorem~\ref{T:PSC conn-sum even}, the assertion for $\frakR^+_{\pM_1}(M_1,h'+ds^2)$ and $\frakR^+_{\pM_2}(M_2,\gamma)$ can be proved in a similar pattern.
\end{proof}

\begin{remark}\label{R:PSC bdry}
The space of PSC metrics on a compact manifold with boundary has been investigated in more general settings by Botvinnik--Ebert--Randal-Williams \cite{BotviEbertRandal17}, Ebert--Randal-Williams \cite{EbertRandal19} and Cecchini--Seyedhosseini--Zenobi \cite{CeccSeyeZeno23}. More precisely, \cite{BotviEbertRandal17,EbertRandal19} address the non-triviality of the homotopy groups of the space assuming the metric is a product near the boundary and restricts to a fixed one on the boundary. \cite{CeccSeyeZeno23} shows that the space has infinite many path components assuming only the metric is a product near the boundary. This means the boundary must admit a PSC metric. In contrast, our result allows the metric to be of non-product type near the boundary.
\end{remark}

\begin{remark}\label{R:PSC bdry-1}
One situation that the hypothesis of Proposition~\ref{P:PSC bdry} holds is that when $h'$ (resp. $\gamma$) can be extended to a PSC metric on the \emph{double} of $N'$ (resp. $X$). By recent results of B\"ar--Hanke \cite{BaerHanke23} (see also de Almeida \cite{Almeida85} and Rosenberg--Weinberg \cite{RosenWein23}), this can be achieved if there exists a PSC metric with non-negative mean curvature along the boundary.\footnote{In fact, this is always true regardless of the manifold being spin or not.} In this perspective, our result can be formulated more concretely.
\end{remark}

\begin{theorem}\label{T:PSC bdry}
Let $N$ be as in Theorem~\ref{T:non-cobor cpt}. Let $N'$ and $X$ be two compact Riemannian spin manifolds with boundary such that $\dim N'=\dim N=\dim X-1$. Suppose $N'$ (resp. $X$) admits a PSC metric $h'$ (resp. $\gamma$) with non-negative mean curvature along $\p N'$ (resp. $\p X$). Again set
\[
M_0=N\#N',\quad M_1=(N\#N')\times S^1,\quad M_2=(N\times S^1)\#X.
\]
Then, the spaces $\frakR^+_{\pM_0}(M_0,h')$, $\frakR^+_{\pM_1}(M_1,h'+ds^2)$ and $\frakR^+_{\pM_2}(M_2,\gamma)$ all have infinitely many path components.
\end{theorem}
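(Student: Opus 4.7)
The plan is to deduce Theorem~\ref{T:PSC bdry} as a direct corollary of Proposition~\ref{P:PSC bdry} by promoting the non-negative mean curvature hypothesis to the ambient-extension hypothesis required there. Proposition~\ref{P:PSC bdry} assumes that $h'$ and $\gamma$ are restrictions of uniformly PSC metrics of bounded geometry on spin manifolds \emph{without} boundary, and the content of Theorem~\ref{T:PSC bdry} is that this ambient manifold may always be taken to be the double once the boundary mean curvature is non-negative.

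First, I would invoke the doubling theorem indicated in Remark~\ref{R:PSC bdry-1}, due to B\"ar--Hanke \cite{BaerHanke23} (with earlier versions by de Almeida \cite{Almeida85} and Rosenberg--Weinberg \cite{RosenWein23}): given a compact Riemannian spin manifold $(N',h')$ with $h'$ of PSC and non-negative mean curvature along $\p N'$, the closed double $\widetilde{N}' := N'\cup_{\p N'}(-N')$ carries a smooth PSC metric $\tilde{h}'$ that restricts to $h'$ on the original copy of $N'$. Applying the same construction to $(X,\gamma)$ yields a closed double $\widetilde{X}$ together with a PSC extension $\tilde{\gamma}$ of $\gamma$. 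Since $\widetilde{N}'$ and $\widetilde{X}$ are closed manifolds, the metrics $\tilde{h}'$ and $\tilde{\gamma}$ are automatically uniformly PSC and of bounded geometry, and the spin structures on $N'$ and $X$ extend across the boundary by reflection so that $\widetilde{N}'$ and $\widetilde{X}$ are closed spin manifolds.

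Second, with $(\widetilde{N}',\tilde{h}')$ and $(\widetilde{X},\tilde{\gamma})$ now playing the role of the ambient spin manifolds carrying the required uniformly PSC metrics of bounded geometry, the hypotheses of Proposition~\ref{P:PSC bdry} are fulfilled for each of $M_0 = N\#N'$, $M_1 = (N\#N')\times S^1$, and $M_2 = (N\times S^1)\#X$. That proposition immediately delivers the desired conclusion that $\frakR^+(M_0;\pM_0,g_{\pM_0})$, $\frakR^+(M_1;\pM_1,g_{\pM_1})$, and $\frakR^+(M_2;\pM_2,g_{\pM_2})$ each have infinitely many path components. No serious obstacle is expected at this stage: the entire additional content of Theorem~\ref{T:PSC bdry} relative to Proposition~\ref{P:PSC bdry} is the observation that the doubling construction supplies the ambient extension for free.
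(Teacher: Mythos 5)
Your proposal is correct and follows exactly the route the paper intends: Remark~\ref{R:PSC bdry-1} states that the non-negative mean curvature hypothesis yields, via the B\"ar--Hanke doubling result, a PSC extension of $h'$ (resp.\ $\gamma$) to the closed double, which (being closed) is automatically uniformly PSC of bounded geometry, so Proposition~\ref{P:PSC bdry} applies verbatim. This is precisely the paper's (implicit) proof of Theorem~\ref{T:PSC bdry}.
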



\bibliographystyle{amsplain}

\begin{bibdiv}
\begin{biblist}

\bib{APS1}{article}{
      author={Atiyah, M.~F.},
      author={Patodi, V.~K.},
      author={Singer, I.~M.},
       title={Spectral asymmetry and {R}iemannian geometry. {I}},
        date={1975},
        ISSN={0305-0041},
     journal={Math. Proc. Cambridge Philos. Soc.},
      volume={77},
       pages={43\ndash 69},
         url={https://doi.org/10.1017/S0305004100049410},
      review={\MR{397797}},
}

\bib{AtSinger63}{article}{
      author={Atiyah, M.~F.},
      author={Singer, I.~M.},
       title={The index of elliptic operators on compact manifolds},
        date={1963},
        ISSN={0002-9904},
     journal={Bull. Amer. Math. Soc.},
      volume={69},
       pages={422\ndash 433},
         url={https://doi.org/10.1090/S0002-9904-1963-10957-X},
      review={\MR{157392}},
}

\bib{BamlerKleiner19}{article}{
      author={Bamler, R.},
      author={Kleiner, B.},
       title={Ricci flow and contractibility of spaces of metrics},
        date={2019},
      eprint={https://arxiv.org/abs/1909.08710},
         url={https://arxiv.org/abs/1909.08710},
}

\bib{BMR18}{article}{
      author={Bandara, L.},
      author={McIntosh, A.},
      author={Ros\'{e}n, A.},
       title={Riesz continuity of the {A}tiyah-{S}inger {D}irac operator under
  perturbations of the metric},
        date={2018},
        ISSN={0025-5831},
     journal={Math. Ann.},
      volume={370},
      number={1-2},
       pages={863\ndash 915},
         url={https://doi.org/10.1007/s00208-017-1610-7},
      review={\MR{3747505}},
}

\bib{BaerBallmann12}{incollection}{
      author={B\"{a}r, C.},
      author={Ballmann, W.},
       title={Boundary value problems for elliptic differential operators of
  first order},
        date={2012},
   booktitle={Surveys in differential geometry. {V}ol. {XVII}},
      series={Surv. Differ. Geom.},
      volume={17},
   publisher={Int. Press, Boston, MA},
       pages={1\ndash 78},
         url={https://doi.org/10.4310/SDG.2012.v17.n1.a1},
      review={\MR{3076058}},
}

\bib{BaerBallmann16}{incollection}{
      author={B\"{a}r, C.},
      author={Ballmann, W.},
       title={Guide to elliptic boundary value problems for {D}irac-type
  operators},
        date={2016},
   booktitle={Arbeitstagung {B}onn 2013},
      series={Progr. Math.},
      volume={319},
   publisher={Birkh\"{a}user/Springer, Cham},
       pages={43\ndash 80},
         url={https://doi.org/10.1007/978-3-319-43648-7_3},
      review={\MR{3618047}},
}

\bib{BaerHanke23}{incollection}{
      author={B\"{a}r, C.},
      author={Hanke, B.},
       title={Boundary conditions for scalar curvature},
        date={[2023] \copyright 2023},
   booktitle={Perspectives in scalar curvature. {V}ol. 2},
   publisher={World Sci. Publ., Hackensack, NJ},
       pages={325\ndash 377},
      review={\MR{4577919}},
}

\bib{BeleHu15}{article}{
      author={Belegradek, I.},
      author={Hu, J.},
       title={Connectedness properties of the space of complete nonnegatively
  curved planes},
        date={2015},
        ISSN={0025-5831,1432-1807},
     journal={Math. Ann.},
      volume={362},
      number={3-4},
       pages={1273\ndash 1286},
         url={https://doi.org/10.1007/s00208-014-1159-7},
      review={\MR{3368099}},
}

\bib{BeGeVe}{book}{
      author={Berline, N.},
      author={Getzler, E.},
      author={Vergne, M.},
       title={Heat kernels and {D}irac operators},
      series={Grundlehren Math. Wiss. [Fundamental Principles of Mathematical
  Sciences]},
   publisher={Springer-Verlag, Berlin},
        date={1992},
      volume={298},
        ISBN={3-540-53340-0},
         url={https://doi.org/10.1007/978-3-642-58088-8},
      review={\MR{1215720}},
}

\bib{BBMM21}{article}{
      author={Bessi\`eres, L.},
      author={Besson, G.},
      author={Maillot, S.},
      author={Marques, F.},
       title={Deforming 3-manifolds of bounded geometry and uniformly positive
  scalar curvature},
        date={2021},
        ISSN={1435-9855,1435-9863},
     journal={J. Eur. Math. Soc. (JEMS)},
      volume={23},
      number={1},
       pages={153\ndash 184},
         url={https://doi.org/10.4171/jems/1008},
      review={\MR{4186465}},
}

\bib{BoossLeschPhillips05}{article}{
      author={Booss-Bavnbek, B.},
      author={Lesch, M.},
      author={Phillips, J.},
       title={Unbounded {F}redholm operators and spectral flow},
        date={2005},
        ISSN={0008-414X,1496-4279},
     journal={Canad. J. Math.},
      volume={57},
      number={2},
       pages={225\ndash 250},
         url={https://doi.org/10.4153/CJM-2005-010-1},
      review={\MR{2124916}},
}

\bib{BotviEbertRandal17}{article}{
      author={Botvinnik, B.},
      author={Ebert, J.},
      author={Randal-Williams, O.},
       title={Infinite loop spaces and positive scalar curvature},
        date={2017},
        ISSN={0020-9910},
     journal={Invent. Math.},
      volume={209},
      number={3},
       pages={749\ndash 835},
         url={https://doi.org/10.1007/s00222-017-0719-3},
      review={\MR{3681394}},
}

\bib{BotviGilkey95eta-psc}{article}{
      author={Botvinnik, B.},
      author={Gilkey, P.~B.},
       title={The eta invariant and metrics of positive scalar curvature},
        date={1995},
        ISSN={0025-5831},
     journal={Math. Ann.},
      volume={302},
      number={3},
       pages={507\ndash 517},
         url={https://doi.org/10.1007/BF01444505},
      review={\MR{1339924}},
}

\bib{BotviGilkey96psc-spaceform}{article}{
      author={Botvinnik, B.},
      author={Gilkey, P.~B.},
       title={Metrics of positive scalar curvature on spherical space forms},
        date={1996},
        ISSN={0008-414X},
     journal={Canad. J. Math.},
      volume={48},
      number={1},
       pages={64\ndash 80},
         url={https://doi.org/10.4153/CJM-1996-003-0},
      review={\MR{1382476}},
}

\bib{BG92spinor}{article}{
      author={Bourguignon, J.-P.},
      author={Gauduchon, P.},
       title={Spineurs, op\'{e}rateurs de {D}irac et variations de
  m\'{e}triques},
        date={1992},
        ISSN={0010-3616},
     journal={Comm. Math. Phys.},
      volume={144},
      number={3},
       pages={581\ndash 599},
         url={http://projecteuclid.org/euclid.cmp/1104249410},
      review={\MR{1158762}},
}

\bib{BrShi21-2}{article}{
      author={Braverman, M.},
      author={Shi, P.},
       title={An {APS} index theorem for even-dimensional manifolds with
  non-compact boundary},
        date={2021},
        ISSN={1019-8385},
     journal={Comm. Anal. Geom.},
      volume={29},
      number={2},
       pages={293\ndash 327},
         url={https://doi.org/10.4310/CAG.2021.v29.n2.a2},
      review={\MR{4250324}},
}

\bib{BrShi21}{article}{
      author={Braverman, M.},
      author={Shi, P.},
       title={The {A}tiyah-{P}atodi-{S}inger index on manifolds with
  non-compact boundary},
        date={2021},
        ISSN={1050-6926},
     journal={J. Geom. Anal.},
      volume={31},
      number={4},
       pages={3713\ndash 3763},
         url={https://doi.org/10.1007/s12220-020-00412-3},
      review={\MR{4236541}},
}

\bib{BruningLesch99}{article}{
      author={Br\"{u}ning, J.},
      author={Lesch, M.},
       title={On the {$\eta$}-invariant of certain nonlocal boundary value
  problems},
        date={1999},
        ISSN={0012-7094},
     journal={Duke Math. J.},
      volume={96},
      number={2},
       pages={425\ndash 468},
         url={https://doi.org/10.1215/S0012-7094-99-09613-8},
      review={\MR{1666570}},
}

\bib{Bunke92}{article}{
      author={Bunke, U.},
       title={Relative index theory},
        date={1992},
        ISSN={0022-1236},
     journal={J. Funct. Anal.},
      volume={105},
      number={1},
       pages={63\ndash 76},
         url={https://doi.org/10.1016/0022-1236(92)90072-Q},
      review={\MR{1156670}},
}

\bib{Bunke93comparison}{inproceedings}{
      author={Bunke, U.},
       title={Comparison of {D}irac operators on manifolds with boundary},
        date={1993},
   booktitle={Proceedings of the {W}inter {S}chool ``{G}eometry and {P}hysics''
  ({S}rn\'{\i}, 1991)},
       pages={133\ndash 141},
      review={\MR{1246627}},
}

\bib{Bunke95}{article}{
      author={Bunke, U.},
       title={A {$K$}-theoretic relative index theorem and {C}allias-type
  {D}irac operators},
        date={1995},
        ISSN={0025-5831},
     journal={Math. Ann.},
      volume={303},
      number={2},
       pages={241\ndash 279},
         url={https://doi.org/10.1007/BF01460989},
      review={\MR{1348799}},
}

\bib{Carlotto21}{article}{
      author={Carlotto, A.},
       title={A survey on positive scalar curvature metrics},
        date={2021},
        ISSN={1972-6724},
     journal={Boll. Unione Mat. Ital.},
      volume={14},
      number={1},
       pages={17\ndash 42},
         url={https://doi.org/10.1007/s40574-020-00228-7},
      review={\MR{4218620}},
}

\bib{Carr88}{article}{
      author={Carr, R.},
       title={Construction of manifolds of positive scalar curvature},
        date={1988},
        ISSN={0002-9947},
     journal={Trans. Amer. Math. Soc.},
      volume={307},
      number={1},
       pages={63\ndash 74},
         url={https://doi.org/10.2307/2000751},
      review={\MR{936805}},
}

\bib{CeccSeyeZeno23}{article}{
      author={Cecchini, S.},
      author={Seyedhosseini, M.},
      author={Zenobi, V.},
       title={Relative torsion and bordism classes of positive scalar curvature
  metrics on manifolds with boundary},
        date={2023},
        ISSN={0025-5874},
     journal={Math. Z.},
      volume={305},
      number={3},
       pages={Paper No. 36, 19},
         url={https://doi.org/10.1007/s00209-023-03334-2},
      review={\MR{4648692}},
}

\bib{Almeida85}{article}{
      author={de~Almeida, S.},
       title={Minimal hypersurfaces of a positive scalar curvature manifold},
        date={1985},
        ISSN={0025-5874,1432-1823},
     journal={Math. Z.},
      volume={190},
      number={1},
       pages={73\ndash 82},
         url={https://doi.org/10.1007/BF01159165},
      review={\MR{793350}},
}

\bib{EbertRandal19}{article}{
      author={Ebert, J.},
      author={Randal-Williams, O.},
       title={Infinite loop spaces and positive scalar curvature in the
  presence of a fundamental group},
        date={2019},
        ISSN={1465-3060},
     journal={Geom. Topol.},
      volume={23},
      number={3},
       pages={1549\ndash 1610},
         url={https://doi.org/10.2140/gt.2019.23.1549},
      review={\MR{3956897}},
}

\bib{Gajer87}{article}{
      author={Gajer, P.},
       title={Riemannian metrics of positive scalar curvature on compact
  manifolds with boundary},
        date={1987},
        ISSN={0232-704X},
     journal={Ann. Global Anal. Geom.},
      volume={5},
      number={3},
       pages={179\ndash 191},
         url={https://doi.org/10.1007/BF00128019},
      review={\MR{962295}},
}

\bib{Getzler93}{article}{
      author={Getzler, E.},
       title={The odd {C}hern character in cyclic homology and spectral flow},
        date={1993},
        ISSN={0040-9383},
     journal={Topology},
      volume={32},
      number={3},
       pages={489\ndash 507},
         url={https://doi.org/10.1016/0040-9383(93)90002-D},
      review={\MR{1231957}},
}

\bib{Gilkey95book}{book}{
      author={Gilkey, P.~B.},
       title={Invariance theory, the heat equation, and the {A}tiyah-{S}inger
  index theorem},
     edition={Second},
      series={Stud. Adv. Math.},
   publisher={CRC Press, Boca Raton, FL},
        date={1995},
        ISBN={0-8493-7874-4},
      review={\MR{1396308}},
}

\bib{Gromov23Four}{incollection}{
      author={Gromov, M.},
       title={Four lectures on scalar curvature},
        date={[2023] \copyright 2023},
   booktitle={Perspectives in scalar curvature. {V}ol. 1},
   publisher={World Sci. Publ., Hackensack, NJ},
       pages={1\ndash 514},
      review={\MR{4577903}},
}

\bib{GromovLawson80classification}{article}{
      author={Gromov, M.},
      author={Lawson, H.~B., Jr.},
       title={The classification of simply connected manifolds of positive
  scalar curvature},
        date={1980},
        ISSN={0003-486X},
     journal={Ann. of Math. (2)},
      volume={111},
      number={3},
       pages={423\ndash 434},
         url={https://doi.org/10.2307/1971103},
      review={\MR{577131}},
}

\bib{GromovLawson80spin}{article}{
      author={Gromov, M.},
      author={Lawson, H.~B., Jr.},
       title={Spin and scalar curvature in the presence of a fundamental group.
  {I}},
        date={1980},
        ISSN={0003-486X},
     journal={Ann. of Math. (2)},
      volume={111},
      number={2},
       pages={209\ndash 230},
         url={https://doi.org/10.2307/1971198},
      review={\MR{569070}},
}

\bib{GromovLawson83}{article}{
      author={Gromov, M.},
      author={Lawson, H.~B., Jr.},
       title={Positive scalar curvature and the {D}irac operator on complete
  {R}iemannian manifolds},
        date={1983},
        ISSN={0073-8301},
     journal={Inst. Hautes \'{E}tudes Sci. Publ. Math.},
      number={58},
       pages={83\ndash 196 (1984)},
         url={http://www.numdam.org/item?id=PMIHES_1983__58__83_0},
      review={\MR{720933}},
}

\bib{Hitchin74}{article}{
      author={Hitchin, N.},
       title={Harmonic spinors},
        date={1974},
        ISSN={0001-8708},
     journal={Advances in Math.},
      volume={14},
       pages={1\ndash 55},
         url={https://doi.org/10.1016/0001-8708(74)90021-8},
      review={\MR{358873}},
}

\bib{KirkLesch04}{article}{
      author={Kirk, P.},
      author={Lesch, M.},
       title={The {$\eta$}-invariant, {M}aslov index, and spectral flow for
  {D}irac-type operators on manifolds with boundary},
        date={2004},
        ISSN={0933-7741},
     journal={Forum Math.},
      volume={16},
      number={4},
       pages={553\ndash 629},
         url={https://doi.org/10.1515/form.2004.027},
      review={\MR{2044028}},
}

\bib{KreckStolz93}{article}{
      author={Kreck, M.},
      author={Stolz, S.},
       title={Nonconnected moduli spaces of positive sectional curvature
  metrics},
        date={1993},
        ISSN={0894-0347},
     journal={J. Amer. Math. Soc.},
      volume={6},
      number={4},
       pages={825\ndash 850},
         url={https://doi.org/10.2307/2152742},
      review={\MR{1205446}},
}

\bib{LawMic89}{book}{
      author={Lawson, H.~B., Jr.},
      author={Michelsohn, M.-L.},
       title={Spin geometry},
      series={Princeton Math. Ser.},
   publisher={Princeton University Press, Princeton, NJ},
        date={1989},
      volume={38},
        ISBN={0-691-08542-0},
      review={\MR{1031992}},
}

\bib{LiSuWang24}{article}{
      author={Li, Y.},
      author={Su, G.},
      author={Wang, X.},
       title={Spectral flow, {L}larull's rigidity theorem in odd dimensions and
  its generalization},
        date={2024},
        ISSN={1674-7283},
     journal={Sci. China Math.},
      volume={67},
      number={5},
       pages={1103\ndash 1114},
         url={https://doi.org/10.1007/s11425-023-2138-5},
      review={\MR{4739559}},
}

\bib{Lich63}{article}{
      author={Lichnerowicz, A.},
       title={Spineurs harmoniques},
        date={1963},
        ISSN={0001-4036},
     journal={C. R. Acad. Sci. Paris},
      volume={257},
       pages={7\ndash 9},
      review={\MR{156292}},
}

\bib{Llarull98}{article}{
      author={Llarull, M.},
       title={Sharp estimates and the {D}irac operator},
        date={1998},
        ISSN={0025-5831},
     journal={Math. Ann.},
      volume={310},
      number={1},
       pages={55\ndash 71},
         url={https://doi.org/10.1007/s002080050136},
      review={\MR{1600027}},
}

\bib{Marques12}{article}{
      author={Marques, F.},
       title={Deforming three-manifolds with positive scalar curvature},
        date={2012},
        ISSN={0003-486X},
     journal={Ann. of Math. (2)},
      volume={176},
      number={2},
       pages={815\ndash 863},
         url={https://doi.org/10.4007/annals.2012.176.2.3},
      review={\MR{2950765}},
}

\bib{MRS16}{article}{
      author={Mrowka, T.},
      author={Ruberman, D.},
      author={Saveliev, N.},
       title={An index theorem for end-periodic operators},
        date={2016},
        ISSN={0010-437X},
     journal={Compos. Math.},
      volume={152},
      number={2},
       pages={399\ndash 444},
         url={https://doi.org/10.1112/S0010437X15007502},
      review={\MR{3462557}},
}

\bib{PiazzaSchick07torsion-rho}{article}{
      author={Piazza, P.},
      author={Schick, T.},
       title={Groups with torsion, bordism and rho invariants},
        date={2007},
        ISSN={0030-8730},
     journal={Pacific J. Math.},
      volume={232},
      number={2},
       pages={355\ndash 378},
         url={https://doi.org/10.2140/pjm.2007.232.355},
      review={\MR{2366359}},
}

\bib{RobbinSalamon95}{article}{
      author={Robbin, J.},
      author={Salamon, D.},
       title={The spectral flow and the {M}aslov index},
        date={1995},
        ISSN={0024-6093,1469-2120},
     journal={Bull. London Math. Soc.},
      volume={27},
      number={1},
       pages={1\ndash 33},
         url={https://doi.org/10.1112/blms/27.1.1},
      review={\MR{1331677}},
}

\bib{RosenbergStolz01}{incollection}{
      author={Rosenberg, J.},
      author={Stolz, S.},
       title={Metrics of positive scalar curvature and connections with
  surgery},
        date={2001},
   booktitle={Surveys on surgery theory, {V}ol. 2},
      series={Ann. of Math. Stud.},
      volume={149},
   publisher={Princeton Univ. Press, Princeton, NJ},
       pages={353\ndash 386},
      review={\MR{1818778}},
}

\bib{RosenWein23}{article}{
      author={Rosenberg, J.},
      author={Weinberger, S.},
       title={Positive scalar curvature on manifolds with boundary and their
  doubles},
        date={2023},
        ISSN={1558-8599,1558-8602},
     journal={Pure Appl. Math. Q.},
      volume={19},
      number={6},
       pages={2919\ndash 2950},
         url={https://doi.org/10.4310/pamq.2023.v19.n6.a12},
      review={\MR{4703039}},
}

\bib{Ruberman98}{article}{
      author={Ruberman, D.},
       title={An obstruction to smooth isotopy in dimension {$4$}},
        date={1998},
        ISSN={1073-2780},
     journal={Math. Res. Lett.},
      volume={5},
      number={6},
       pages={743\ndash 758},
         url={https://doi.org/10.4310/MRL.1998.v5.n6.a5},
      review={\MR{1671187}},
}

\bib{Ruberman01}{article}{
      author={Ruberman, D.},
       title={Positive scalar curvature, diffeomorphisms and the
  {S}eiberg-{W}itten invariants},
        date={2001},
        ISSN={1465-3060},
     journal={Geom. Topol.},
      volume={5},
       pages={895\ndash 924},
         url={https://doi.org/10.2140/gt.2001.5.895},
      review={\MR{1874146}},
}

\bib{Shi22}{article}{
      author={Shi, P.},
       title={The relative eta invariant for a pair of {D}irac-type operators
  on non-compact manifolds},
        date={2022},
        ISSN={0022-2518},
     journal={Indiana Univ. Math. J.},
      volume={71},
      number={5},
       pages={1923\ndash 1966},
      review={\MR{4509824}},
}

\bib{TuschmannWraith15book}{book}{
      author={Tuschmann, W.},
      author={Wraith, D.~J.},
       title={Moduli spaces of {R}iemannian metrics},
      series={Oberwolfach Semin.},
   publisher={Birkh\"{a}user Verlag, Basel},
        date={2015},
      volume={46},
        ISBN={978-3-0348-0947-4; 978-3-0348-0948-1},
         url={https://doi.org/10.1007/978-3-0348-0948-1},
        note={Second corrected printing},
      review={\MR{3445334}},
}

\bib{Zhang01book}{book}{
      author={Zhang, W.},
       title={Lectures on {C}hern-{W}eil theory and {W}itten deformations},
      series={Nankai Tracts Math.},
   publisher={World Scientific Publishing Co., Inc., River Edge, NJ},
        date={2001},
      volume={4},
        ISBN={981-02-4686-2},
         url={https://doi.org/10.1142/9789812386588},
      review={\MR{1864735}},
}

\end{biblist}
\end{bibdiv}

\end{document}